\pdfoutput=1 

\documentclass[11pt]{amsart}

\usepackage{amssymb,amsthm,enumitem,colonequals,tikz-cd,microtype}
\usepackage[normalem]{ulem} 

\usepackage[osf]{Baskervaldx}
\usepackage[baskervaldx]{newtxmath}
\usepackage[cal=boondoxo]{mathalfa} 

\usepackage[top=3.75cm, bottom=3cm, left=3.5cm, right=3.5cm]{geometry}

\usepackage{xcolor}  
\colorlet{darkblue}{blue!55!black}
\colorlet{darkcyan}{cyan!50!black}
\colorlet{darkgreen}{green!60!black}

\PassOptionsToPackage{hyphens}{url}
\usepackage{hyperref}
\hypersetup{
    colorlinks=true,
    linkcolor=darkblue,
    urlcolor=darkcyan,
    citecolor=darkgreen,
}

\def\eqref#1{\textcolor{darkblue}{(\ref{#1})}}

\usepackage[nameinlink]{cleveref} 
\Crefformat{section}{#2\S#1#3}
\Crefmultiformat{section}{#2\S\S#1#3}{ and~#2#1#3}{, #2#1#3}{, and~#2#1#3}

\usepackage[pagewise]{lineno}
\let\oldequation\equation
\let\oldendequation\endequation
\renewenvironment{equation}{\linenomathNonumbers\oldequation}{\oldendequation\endlinenomath}
\expandafter\let\expandafter\oldequationstar\csname equation*\endcsname
\expandafter\let\expandafter\oldendequationstar\csname endequation*\endcsname
\renewenvironment{equation*}{\linenomathNonumbers\oldequationstar}{\oldendequationstar\endlinenomath}
\let\oldalign\align
\let\oldendalign\endalign
\renewenvironment{align}{\linenomathNonumbers\oldalign}{\oldendalign\endlinenomath}
\expandafter\let\expandafter\oldalignstar\csname align*\endcsname
\expandafter\let\expandafter\oldendalignstar\csname endalign*\endcsname
\renewenvironment{align*}{\linenomathNonumbers\oldalignstar}{\oldendalignstar\endlinenomath}

\theoremstyle{plain}
\newtheorem{theorem}{Theorem}[section]
\newtheorem{lemma}[theorem]{Lemma}
\newtheorem{corollary}[theorem]{Corollary}
\newtheorem{proposition}[theorem]{Proposition}

\theoremstyle{definition}

\newtheorem{example}[theorem]{Example}
\newtheorem{remark}[theorem]{Remark}
\newtheorem{notation}[theorem]{Notation}
\newtheorem{question}[theorem]{Question}
\newtheorem{placeholder}[theorem]{Placeholder}

\newtheorem*{ack}{Acknowledgments}

\AddToHook{env/conjecture/begin}{\crefalias{theorem}{conjecture}}
\AddToHook{env/lemma/begin}{\crefalias{theorem}{lemma}}
\AddToHook{env/corollary/begin}{\crefalias{theorem}{corollary}}
\AddToHook{env/proposition/begin}{\crefalias{theorem}{proposition}}
\AddToHook{env/definition/begin}{\crefalias{theorem}{definition}}
\AddToHook{env/remark/begin}{\crefalias{theorem}{remark}}
\AddToHook{env/example/begin}{\crefalias{theorem}{example}}
\AddToHook{env/notation/begin}{\crefalias{theorem}{notation}}
\AddToHook{env/placeholder/begin}{\crefalias{theorem}{Placeholder}}

\numberwithin{equation}{section}
\numberwithin{theorem}{section}

\title[Dualizing complexes and $t$-structures for algebraic spaces]{Dualizing complexes and $t$-structures \\ for algebraic spaces}

\author[P.~Lank]{Pat Lank}
\address{P.~Lank,
Dipartimento di Matematica “F. Enriques”, Universit\`{a} degli Studi di Milano, Via Cesare
Saldini 50, 20133 Milano, Italy}
\email{plankmathematics@gmail.com}

\date{\today}

\keywords{algebraic spaces, dualizing complexes, derived categories, $t$-structures}

\subjclass[2020]{14A30 (primary), 14F08, 18G80} 

\begin{document}
    
\begin{abstract}
    This work is concerned with $t$-structures on Noetherian algebraic spaces admitting dualizing complexes.
    In particular, we study their behavior in the \'{e}tale topology. 
    As a consequence, we classify all tensor $t$-structures on their bounded derived category of coherent sheaves. 
\end{abstract}

\maketitle

\tableofcontents

\section{Introduction}
\label{sec:intro}

The guiding question of this article is:

\begin{question}
    \label{q:behavior}
   How do tensor $t$-structures behave in the \'{e}tale topology?
\end{question}

Two related problems naturally arise. 
First, given a tensor $t$-structure on $D_{\operatorname{qc}}$, one would like to know whether its truncation triangles are compatible with pullback along an \'{e}tale presentation.
Second, one would like to detect whether such a $t$-structure restricts to the derived category of complexes with bounded and coherent cohomology $D^b_{\operatorname{coh}}$ after passing to an \'{e}tale cover.

Neither is formal: left $t$-exactness of pullback requires control of the associated coaisle, whereas a bounded pseudocoherent complex on the source of an \'{e}tale cover need not arise from a pullback.
We address these two problems.
As a result, we classify tensor $t$-structures on $D^b_{\operatorname{coh}}$ of a Noetherian algebraic space. 

\subsection{What is known}
\label{sec:intro_what_is_known}

Dualizing complexes lie at the heart of Grothendieck duality and its applications in algebraic geometry and commutative algebra.
We use the standard \'{e}tale local definition for dualizing complexes  \cite[\href{https://stacks.math.columbia.edu/tag/0E4X}{Tag 0E4X}]{stacks-project}.
Among many things, a notable application of dualizing complexes arises in the theory of $t$-structures. 

A key structural result is due to Alonso Tarr\'{i}o, Jerem\'{i}as L\'{o}pez, and Saor\'{i}n \cite[Theorem 6.9]{AlonsoTarrio/JeremiasLopez/Saorin:2010}, who classify $t$-structures on $D^b_{\operatorname{coh}}(R)$ when $R$ is a Noetherian ring admitting a dualizing complex. 
The classification is expressed in terms of Thomason filtrations of $\operatorname{Spec}(R)$ satisfying the weak Cousin condition.
Takahashi has extended the classification on $D^b_{\operatorname{coh}}$ to the broader class of CM-excellent rings \cite{Takahashi:2023}.

The classification of $t$-structures on $D_{\operatorname{qc}}$ in general is subtle. 
Even in the affine case, a complete description of all $t$-structures on $D_{\operatorname{qc}}(X)$ is not available for $X=\operatorname{Spec}(\mathbb{Z})$ \cite{Stanley:2010}.
This motivates restricting attention to well-behaved classes of $t$-structures.

In the global setting, we imposes a tensor compatibility. 
Namely, let $X$ be a Noetherian algebraic space. 
A $t$-structure $\mathcal{A}$ on $D_{\operatorname{qc}}(X)$ is \textit{tensor} if $D^{\leq 0}_{\operatorname{qc}}(X) \otimes^\mathbf{L} \mathcal{A} \subseteq \mathcal{A}$, and a $t$-structure $(\mathcal{A},\mathcal{B})$ on $D^b_{\operatorname{coh}}(X)$ is \textit{tensor} if $D^{\leq 0}_{\operatorname{coh}}(X) \otimes^{\mathbf{L}} \mathcal{A} \subseteq \mathcal{A}$.

A further restriction is tensor compatibility with compactly generated $t$-structures. 
In the affine Noetherian setting, all $t$-structures are tensor compatible. These were classified in \cite{AlonsoTarrio/JeremiasLopez/Saorin:2010}, and later extended to more general affine schemes in \cite{Hrbek:2020}. 
In the global setting, \cite[Theorem 1.4]{Hrbek/Lank/Pizzirani:2025} establishes such a classification for quasi-compact quasi-separated algebraic spaces. 
These classification results are governed by Thomason filtrations on the underlying topological space (see \Cref{sec:prelim_Thomason}). 

Turning to $D^b_{\operatorname{coh}}$, classifications require topological input. 
In the affine Noetherian setting, if a dualizing complex exists, \cite[Theorem 6.9]{AlonsoTarrio/JeremiasLopez/Saorin:2010} shows that $t$-structures on $D^b_{\operatorname{coh}}(R)$ are classified by Thomason filtrations satisfying a weak Cousin condition.  
For CM-excellent schemes, a relative version of the classification has been obtained in \cite{Clark/Lank/ManaliRahul/Parker:2024}, building on \cite{Takahashi:2023}. 
Specifically, the tensor compatible $t$-structures on $D^b_{\operatorname{coh},Z}(X)$ correspond to Thomason filtrations satisfying the weak Cousin condition along $Z$.

\subsection{What we do}
\label{sec:intro_what_we_do}

While algebraic spaces are \'{e}tale locally schemes, the current machinery is not effective enough to handle algebraic spaces. 
The two obstacles are whether truncation functors are compatible along pullback of an \'{e}tale presentation, and how restrictability can be probed along an \'{e}tale cover.

Our main result is the following:

\begin{theorem}
    \label{thm:dualizing_complex_classification}
    Let $X$ be a Noetherian algebraic space admitting a dualizing complex. 
    Then there exists a one-to-one correspondence:
    \begin{displaymath}
        \begin{aligned}
            \{ \otimes\textrm{-aisle on } 
            & D^b_{\operatorname{coh}}(X)\} 
            \\& \iff \{ \textrm{Thomason filtrations on } |X| \textrm{ satisfying the weak Cousin condition} \}.
        \end{aligned}
    \end{displaymath}
\end{theorem}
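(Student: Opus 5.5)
The plan is to reduce to the classification of compactly generated tensor $t$-structures on $D_{\operatorname{qc}}(X)$ from \cite[Theorem 1.4]{Hrbek/Lank/Pizzirani:2025}. That result attaches to each Thomason filtration $\phi$ on $|X|$ the aisle $\mathcal{U}_\phi$ of complexes whose cohomology sheaves $\mathcal{H}^i$ are supported in $\phi(i)$. I will set up maps in both directions.

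\emph{From aisles to filtrations.} Given a $\otimes$-aisle $\mathcal{A}$ on $D^b_{\operatorname{coh}}(X)$, let $\operatorname{Ind}\mathcal{A}$ be the smallest cocomplete pre-aisle of $D_{\operatorname{qc}}(X)$ containing $\mathcal{A}$. This is compactly generated, because $\mathcal{A}$ is generated by objects of $D^b_{\operatorname{coh}}$ and on a Noetherian space each such object is a filtered colimit of perfect complexes with controlled supports; I would argue this via supports and Koszul-type objects. It is also tensor, so it equals $\mathcal{U}_\phi$ for a unique Thomason filtration $\phi$. Conversely, given $\phi$, I set $\mathcal{A}_\phi = \mathcal{U}_\phi\cap D^b_{\operatorname{coh}}(X)$. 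Showing that $\mathcal{A}\mapsto \phi$ and $\phi\mapsto\mathcal{A}_\phi$ are mutually inverse on the stated sets comes down to two claims. First, $\mathcal{A}=\operatorname{Ind}\mathcal{A}\cap D^b_{\operatorname{coh}}$. Second, $\mathcal{U}_\phi$ restricts to a $t$-structure on $D^b_{\operatorname{coh}}(X)$, meaning the truncation of a bounded coherent complex stays bounded coherent, if and only if $\phi$ satisfies the weak Cousin condition.

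\emph{Étale descent of the restriction criterion.} Choose an étale presentation $U\to X$ with $U$ an affine, or at least Noetherian, scheme. A dualizing complex on $X$ pulls back to one on $U$, and $\phi$ pulls back to a Thomason filtration $\phi_U$ on $|U|$. Because weak Cousin is a condition on specialization chains $x\rightsquigarrow y$ with $y$ an immediate specialization, it is local and compatible with étale maps: étale maps preserve codimension and lift specializations (going down for flat maps, and going up is not needed for immediate specializations). Hence $\phi$ is weak Cousin iff $\phi_U$ is. On $U$ the criterion is \cite[Theorem 6.9]{AlonsoTarrio/JeremiasLopez/Saorin:2010}, covering $U$ by affines. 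To transfer it I need the truncation for $\mathcal{U}_\phi$ to commute with $f^*$ for étale $f$. Right $t$-exactness is clear since $f^*$ preserves supports cohomologically. Left $t$-exactness requires describing the coaisle. I would use that the coaisle of $\mathcal{U}_\phi$ is cut out by vanishing of $\mathbf{R}\Gamma_{Z}$-type local cohomology against the generators $\phi(i)$, and that local cohomology commutes with flat base change. Then $\tau^{\leq}_\phi(E)$ lies in $D^b_{\operatorname{coh}}$ iff its pullback does, since coherence and boundedness are étale local. So restriction for $X$ follows from restriction on $U$.

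\emph{Recovering $\mathcal{A}$.} For the first claim, given $E\in\operatorname{Ind}\mathcal{A}\cap D^b_{\operatorname{coh}}$, I use that $\mathcal{A}$ is an aisle in $D^b_{\operatorname{coh}}$. Its truncation triangle $A\to E\to B$ with $B\in\mathcal{A}^{\perp}$ has $B$ right orthogonal to all of $\operatorname{Ind}\mathcal{A}$, because $\operatorname{Hom}$ out of a coherent object commutes appropriately here. That step needs care; I would check it on $U$ using compactness of the generators in the homotopy colimit description. It follows that $B=0$.

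The main obstacle is the left $t$-exactness of étale pullback, that is, controlling the coaisle. The second delicate point is the last claim, where $D^b_{\operatorname{coh}}$ objects are not compact in $D_{\operatorname{qc}}$. There I would first try to exploit the dualizing complex: Grothendieck duality $\mathbf{R}\mathcal{H}om(-,\omega)$ swaps the relevant aisles with supported coaisles, and it converts the problem to the compact-generation side.
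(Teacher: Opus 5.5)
\textbf{Main gap: the \'{e}tale transfer only goes one way.} Your transfer of the restriction criterion runs in one direction, and the bijection also needs the other. $t$-exactness of $\mathbf{L}s^\ast$ together with \'{e}tale locality of coherence gives you the following: for $E\in D^b_{\operatorname{coh}}(X)$, the $\mathcal{U}_\phi$-truncations of $E$ are bounded coherent if and only if the $\mathcal{U}_{\phi_U}$-truncations of $\mathbf{L}s^\ast E$ are. From this you get ``$\mathcal{U}_{\phi_U}$ restricts on $U$ $\Rightarrow$ $\mathcal{U}_\phi$ restricts on $X$'', and hence weak Cousin $\Rightarrow$ restriction, as you say.

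The bijection also needs restriction $\Rightarrow$ weak Cousin, to know that $\phi_{\mathcal{A}}$ is weak Cousin for an arbitrary $\otimes$-aisle $\mathcal{A}$. To deduce this from the affine theorem on $U$, you must know that $\mathcal{U}_{\phi_U}$ restricts on \emph{all} of $D^b_{\operatorname{coh}}(U)$. Your hypothesis only controls truncations of objects in the essential image of $\mathbf{L}s^\ast$. A bounded coherent complex on $U$ is in general not a pullback, and nothing in your sketch handles it.

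The paper fills this with \Cref{prop:etale_verdier_localization}: for separated \'{e}tale $s$, every $F\in D^b_{\operatorname{coh}}(U)$ is a direct summand of some $\mathbf{L}s^\ast E'$. The proof runs as follows.
\begin{enumerate}
\item Factor $s=t\circ p$ by Zariski's Main Theorem, with $p$ an open immersion and $t$ finite.
\item Extend $F$ across $p$ using the Verdier localization of \Cref{lem:verdier}, and push forward along $t$.
\item Since $s$ is unramified and separated, the graph of $p$ is open and closed in $U\times_X X'$. Flat base change then splits $\mathbf{L}p^\ast E'$ off $\mathbf{L}s^\ast\mathbf{R}t_\ast E'$.
\end{enumerate}
Since truncations are additive, restriction then ascends to $U$ (\Cref{prop:restriction_implies_weak_Cousin_across_Z}). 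Some such ingredient, or an explicit test-object computation replacing it, is indispensable.

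\textbf{Further points.} The compact generation of $\operatorname{Ind}\mathcal{A}$ is asserted rather than proved. Writing $E$ as a homotopy colimit of perfect complexes does not help unless those perfect complexes lie in $\overline{\langle E\rangle}^{(-\infty,0]}_{\otimes}$. The real content is that this aisle depends only on the sets $\bigcup_{i\geq n}\operatorname{Supp}\mathcal{H}^i(E)$. That is known affine-locally and has to be globalized.

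The paper globalizes it in \Cref{lem:aisles_based_off_support} and \Cref{prop:aisles_are_compactly_generated}. It uses $t$-exactness of $\mathbf{L}s^\ast$ for arbitrary $\overline{\langle\mathcal{S}\rangle}^{(-\infty,0]}_{\otimes}$ with $\mathcal{S}\subseteq D^b_{\operatorname{coh}}(X)$, before these are known to be of the form $\mathcal{U}_\phi$. This is why the boundedness result \Cref{thm:bounded_below} enters.

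Your local-cohomology description of the coaisle, $\mathcal{U}_\phi^\perp=\{B:\mathbf{R}\Gamma_{\phi(n)}B\in D^{\geq n+1}_{\operatorname{qc}}(X) \textrm{ for all } n\}$, combined with flat base change for local cohomology, is a sound and arguably cleaner route to left $t$-exactness for $\mathcal{U}_\phi$ itself. However, it presupposes compact generation, so it cannot supply it.

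On the other hand, you place the difficulty in the wrong spot when recovering $\mathcal{A}$; that step is formal. For fixed $B$, the class of $T$ with $\operatorname{Hom}(T[k],B)=0$ for all $k\geq 0$ is closed under coproducts, extensions and positive shifts. Hence $\mathcal{A}^\perp=(\overline{\langle\mathcal{A}\rangle}^{(-\infty,0]})^\perp$ by \Cref{lem:aisle_via_double_orthogonal}, with no compactness or duality involved.

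Finally, ``\'{e}tale maps preserve codimension'' does not by itself make immediate specializations in $U$ map to immediate ones in $X$; this can fail over non-catenary local rings. You need the dimension function provided by the dualizing complex, as in \Cref{lem:lifting_descending_immediate_generalizations}.
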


We describe the ingredients needed for the proof.
The first is concerned with $t$-exactness for aisles and derived pullback. 

\begin{proposition}
    \label{prop:t_exactness_flat_pullback}
    Let $X$ be a Noetherian algebraic space, $s\colon U \to X$ be an \'{e}tale presentation from an affine scheme, and $\mathcal{S}\subseteq D^b_{\operatorname{coh}}(X)$. 
    Then $\mathbf{L} s^\ast$ induces a $t$-exact functor,
    \begin{displaymath}
        (D_{\operatorname{qc}}(X), \overline{\langle \mathcal{S} \rangle}^{(-\infty,0]}_{\otimes} ) \to (D_{\operatorname{qc}}(U), \overline{\langle \mathbf{L} s^\ast \mathcal{S} \rangle}^{(-\infty,0]}_{\otimes} ).
    \end{displaymath}
\end{proposition}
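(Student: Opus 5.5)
Write $\mathcal{A}_X \colonequals \overline{\langle \mathcal{S} \rangle}^{(-\infty,0]}_{\otimes}$ for the smallest cocomplete pre-aisle on $D_{\operatorname{qc}}(X)$ that contains $\mathcal{S}$ and is closed under $D^{\leq 0}_{\operatorname{qc}}(X)\otimes^{\mathbf{L}}-$. Define $\mathcal{A}_U$ on $U$ in the same way from $\mathbf{L}s^\ast\mathcal{S}$. Since $\mathcal{S}$ consists of pseudocoherent (indeed bounded coherent) objects, these are compactly generated tensor aisles by the classification of \cite[Theorem 1.4]{Hrbek/Lank/Pizzirani:2025}. Each corresponds to a Thomason filtration, $\phi_X$ on $|X|$ and $\phi_U$ on $|U|$. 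Recall that $\phi_X(n)$ is the union of the supports of $\mathcal{H}^j(E)$ for $E\in\mathcal{S}$ and $j\geq -n$, and $\mathcal{A}_X$ consists of the $E$ with $\operatorname{Supp}\mathcal{H}^{j}(E)\subseteq\phi_X(j)$. The first step is to show that $\phi_U = |s|^{-1}\phi_X$. This holds because $s$ is flat, so $\mathcal{H}^j(\mathbf{L}s^\ast E)=s^\ast\mathcal{H}^j(E)$, and for coherent sheaves $\operatorname{Supp}(s^\ast\mathcal{F})=|s|^{-1}\operatorname{Supp}\mathcal{F}$.

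Right $t$-exactness, $\mathbf{L}s^\ast\mathcal{A}_X\subseteq\mathcal{A}_U$, then follows in either of two ways. Formally, $\mathbf{L}s^\ast$ is symmetric monoidal, preserves coproducts and cones, and sends $D^{\leq0}_{\operatorname{qc}}(X)$ into $D^{\leq 0}_{\operatorname{qc}}(U)$. Hence the preimage of $\mathcal{A}_U$ is a cocomplete pre-aisle on $X$ containing $\mathcal{S}$ and closed under tensoring with $D^{\leq 0}_{\operatorname{qc}}(X)$, so it contains $\mathcal{A}_X$. Alternatively, one can use the cohomological description together with flatness.

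Left $t$-exactness is the main obstacle. We must show that $\mathbf{L}s^\ast$ sends the coaisle $\mathcal{A}_X^{\perp}[1]$ into $\mathcal{A}_U^{\perp}[1]$. The approach is local cohomology. For a Thomason filtration, the truncation $\tau^{\leq}_{\phi}E$ is built from the functors $\mathbf{R}\Gamma_{\phi(n)}$ (a homotopy colimit of iterated triangles involving $\mathbf{R}\Gamma_{Z}$ for the Thomason subsets $Z=\phi(n)$ and the standard truncations). It is known that $\mathbf{L}s^\ast\mathbf{R}\Gamma_Z\cong \mathbf{R}\Gamma_{s^{-1}Z}\mathbf{L}s^\ast$ for flat $s$ and $Z$ a union of closed sets with quasi-compact complement. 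This reduces to closed $Z$ with quasi-compact complement, where it is flat base change for $j_\ast$ of the open complement, and then one passes to filtered colimits, which $\mathbf{L}s^\ast$ preserves. Using this, one shows that $\mathbf{L}s^\ast$ carries the truncation triangle of $E$ for $\phi_X$ to a triangle whose first term lies in $\mathcal{A}_U$ by the first step and the previous paragraph.

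The delicate point is the third term. A cleaner route avoids explicit formulas and uses compact generation. An object $F\in D_{\operatorname{qc}}(U)$ lies in $\mathcal{A}_U^{\perp}$ iff $\operatorname{Hom}(G\otimes^{\mathbf{L}}\mathbf{L}s^\ast P[i],F)=0$ for compact generators $G$ of $D_{\operatorname{qc}}(U)$, $P\in\mathcal{S}$, $i\geq0$. We want this for $F=\mathbf{L}s^\ast B$ with $B\in\mathcal{A}_X^{\perp}$. By adjunction this becomes $\operatorname{Hom}(\mathbf{R}s_\ast(G\otimes\mathbf{L}s^\ast P[i]),B)$, and the projection formula gives $\mathbf{R}s_\ast G\otimes^{\mathbf{L}} P[i]$ (here $s$ is affine since $U$ is affine and $X$ has affine diagonal locally; in general one uses quasi-compactness and separatedness of $s$). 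The step to check is that $\mathbf{R}s_\ast G\in D^{\leq m}_{\operatorname{qc}}(X)$ for some $m$. Then $\mathbf{R}s_\ast G\otimes P$ lies in $\mathcal{A}_X[-m]$ rather than $\mathcal{A}_X$, which is not good enough. To fix the shift, I would take $G=\mathcal{O}_U[k]$ with $k\geq 0$ suitably chosen. This is legitimate because tensor aisles on $U$ affine are generated by $\mathcal{O}_U\otimes\mathbf{L}s^\ast P[i]$, $i\geq0$, since $\mathcal{O}_U$ generates $D^{\leq 0}_{\operatorname{qc}}(U)$ under colimits and extensions. Moreover $\mathbf{R}s_\ast\mathcal{O}_U=s_\ast\mathcal{O}_U$ is a sheaf in degree $0$, since $s$ is affine (this is the point where the separatedness hypotheses on the presentation are needed; otherwise one should first refine $s$). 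Hence $s_\ast\mathcal{O}_U\otimes^{\mathbf{L}}P[i]\in\mathcal{A}_X$ by tensor closedness, and the Hom vanishes. This gives left $t$-exactness, and with right $t$-exactness, $t$-exactness.
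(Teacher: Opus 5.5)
Your right $t$-exactness argument is correct and matches the paper's. The gap is in left $t$-exactness, at the step ``by adjunction this becomes $\operatorname{Hom}(\mathbf{R}s_\ast(G\otimes^{\mathbf{L}}\mathbf{L}s^\ast P[i]),B)$''.

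\textbf{The adjunction goes the wrong way.} The adjunction is $\mathbf{L}s^\ast\dashv\mathbf{R}s_\ast$. It rewrites maps \emph{out of} $\mathbf{L}s^\ast(-)$, not maps \emph{into} $\mathbf{L}s^\ast B$. The right adjoint of $\mathbf{R}s_\ast$ is $s^\times$, which in general differs from $\mathbf{L}s^\ast$ when $s$ is not proper. For example, for the open immersion $s\colon\mathbb{G}_m\to\mathbb{A}^1$ one has $\operatorname{Hom}(\mathcal{O}_{\mathbb{G}_m},\mathbf{L}s^\ast\mathcal{O}_{\mathbb{A}^1})=k[t,t^{-1}]$, while $\operatorname{Hom}_{k[t]}(k[t,t^{-1}],k[t])=0$.

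The correct adjunction together with the projection formula gives $\operatorname{Hom}(\mathbf{L}s^\ast P[i],\mathbf{L}s^\ast B)\cong\operatorname{Hom}(P[i],B\otimes^{\mathbf{L}}\mathbf{R}s_\ast\mathcal{O}_U)$. So what you would actually need is that the \emph{coaisle} is stable under $-\otimes^{\mathbf{L}}\mathbf{R}s_\ast\mathcal{O}_U$. Tensor compatibility constrains only the aisle, so this is not formal; it is essentially the statement itself. Equivalently, you need $\mathbf{L}s^\ast\mathbf{R}\mathcal{H}\!\mathit{om}(P,B)\cong\mathbf{R}\mathcal{H}\!\mathit{om}(\mathbf{L}s^\ast P,\mathbf{L}s^\ast B)$. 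For $P$ merely pseudocoherent this requires $B\in D^+_{\operatorname{qc}}(X)$, whereas objects of the coaisle may be unbounded below. Choosing $G=\mathcal{O}_U[k]$ does not touch this. Also, $s$ need not be affine for a general Noetherian algebraic space (it is only separated), so $\mathbf{R}s_\ast\mathcal{O}_U$ need not be a sheaf.

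\textbf{How the paper gets around this.} For $B\in\widetilde{\mathcal{S}}^\perp$ it takes the local cohomology triangle $\Gamma_Z B\to B\to L_Z B$, where $Z$ is the union of the supports.
\begin{itemize}
\item The term $\mathbf{L}s^\ast L_Z B$ lies in $D_{\operatorname{qc},|s|^{-1}(Z)}(U)^\perp$, hence automatically in the coaisle on $U$.
\item The term $\Gamma_Z B$ lies in the coaisle and is bounded below. This is because of the B.C.C.\ theorem (\Cref{thm:bounded_below}), $D^{\leq -n}_{\operatorname{qc},Z}(X)\subseteq\overline{\langle\mathcal{S}\rangle}^{(-\infty,0]}_{\otimes}$, which is proved via the \'{e}tale induction principle.
\item \Cref{lem:bounded_below_t_exactness_flat_pullback} then carries out the $\mathbf{R}\mathcal{H}\!\mathit{om}$ base change for bounded-below targets.
\end{itemize}
Your local-cohomology paragraph points in this direction but stops at ``the third term''. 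The missing ingredient is precisely the boundedness of $\Gamma_Z B$.

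\textbf{Your first paragraph is not available either.} \cite[Theorem 1.4]{Hrbek/Lank/Pizzirani:2025} concerns tensor aisles generated by \emph{perfect} complexes. Compact generation of $\overline{\langle\mathcal{S}\rangle}^{(-\infty,0]}_{\otimes}$ for $\mathcal{S}\subseteq D^b_{\operatorname{coh}}(X)$, and the cohomology-support description of it on an algebraic space, is \Cref{prop:aisles_are_compactly_generated}. The paper deduces that from the present proposition via \Cref{lem:aisles_based_off_support}.

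If you did have perfect generators $\mathcal{Q}$ with $\mathbf{L}s^\ast\mathcal{Q}$ generating the aisle on $U$, left exactness would be easy. Indeed $\mathbf{R}\mathcal{H}\!\mathit{om}(Q,-)\cong Q^\vee\otimes^{\mathbf{L}}-$ commutes with flat pullback with no boundedness hypothesis. That shortcut is exactly what is not yet available at this point.

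Finally, a minor point: the filtration should be $\phi_X(n)=\bigcup_{j\geq n}\operatorname{Supp}\mathcal{H}^j(E)$. With $j\geq -n$ it is increasing in $n$, so it is not a Thomason filtration.
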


Right $t$-exactness is immediate from the construction of the aisles.
Left $t$-exactness is more delicate.
Its proof uses a boundedness criterion for tensor aisles on algebraic spaces, together with a `twisted' $t$-exactness result for derived pushforward between tensor aisles.
These results allow the argument to be reduced to the corresponding affine statement. 
See \Cref{sec:exactness} for details.

The second ingredient concerns controlling bounded pseudocoherent complexes under separated \'{e}tale morphisms. 

\begin{proposition}
    \label{prop:etale_verdier_localization}
    Let $s\colon U \to X$ be a separated \'{e}tale morphism of Noetherian algebraic spaces.
    Then $\mathbf{L}s^\ast \colon D^b_{\operatorname{coh}}(X) \to D^b_{\operatorname{coh}}(U)$ is essentially dense (i.e.\ for every $E\in D^b_{\operatorname{coh}}(U)$ there exists $E^\prime \in D^b_{\operatorname{coh}}(X)$ such that $E$ is a direct summand of $\mathbf{L}s^\ast E^\prime$). 
\end{proposition}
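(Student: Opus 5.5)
The plan is to use the pushforward: for $E\in D^b_{\operatorname{coh}}(U)$, produce a coherent complex on $X$ whose pullback contains $E$ as a direct summand. Since $s$ is separated and étale, the diagonal $\Delta\colon U\to U\times_X U$ is an open immersion (étale and a monomorphism) and a closed immersion (separatedness). Hence $U\times_X U = U\sqcup W$ decomposes as a disjoint union, with the diagonal one component. Write $p_1,p_2\colon U\times_X U\to U$ for the projections.

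First, suppose $s$ is also quasi-compact, so that $\mathbf{R}s_\ast$ preserves $D_{\operatorname{qc}}$. Flat base change gives $\mathbf{L}s^\ast\mathbf{R}s_\ast E\cong \mathbf{R}p_{1,\ast}\mathbf{L}p_2^\ast E$. Because of the splitting $U\times_X U=U\sqcup W$, this is $E\oplus \mathbf{R}(p_1|_W)_\ast (p_2|_W)^\ast E$. So $E$ is a direct summand of $\mathbf{L}s^\ast\mathbf{R}s_\ast E$. The trouble is that $\mathbf{R}s_\ast E$ need not be coherent, since $s$ is not proper.

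Fix this by writing $\mathbf{R}s_\ast E$ as a filtered (homotopy) colimit of objects of $D^b_{\operatorname{coh}}(X)$. One could use that $D_{\operatorname{qc}}(X)$ is compactly generated with $D^b_{\operatorname{coh}}$ approximating, or, more concretely, take a coherent subsheaf approximation: by a Thomason–Trobaugh style argument, $\mathbf{R}s_\ast E$ is a filtered colimit of pseudocoherent truncations $F_i\in D^b_{\operatorname{coh}}(X)$. A cleaner route is Zariski's main theorem for algebraic spaces. A separated quasi-finite finite-type $s$ factors as an open immersion $j\colon U\to \bar U$ followed by a finite $\bar s\colon\bar U\to X$. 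Extend $E$ to some $\bar E\in D^b_{\operatorname{coh}}(\bar U)$ with $j^\ast\bar E\cong E$; this is the extension of coherent complexes along open immersions of Noetherian algebraic spaces, which holds up to a direct summand. Then $E':=\mathbf{R}\bar s_\ast\bar E\in D^b_{\operatorname{coh}}(X)$, since $\bar s$ is finite.

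Next, compute $\mathbf{L}s^\ast E'$. Flat base change along $s$ gives $\mathbf{L}s^\ast E'\cong \mathbf{R}q_\ast$ of the pullback of $\bar E$ to $U\times_X\bar U$, where $q\colon U\times_X\bar U\to U$ is finite. Now $U\times_X U$ sits inside $U\times_X\bar U$ as an open subspace, and its diagonal component $U$ is closed in $U\times_X U$. This component is also closed in $U\times_X\bar U$, because it is the graph of the $X$-morphism $j\colon U\to\bar U$, with $\bar U\to X$ separated. Being both open and closed, it splits off: $U\times_X\bar U=\Gamma_j\sqcup V$. Therefore $\mathbf{L}s^\ast E'\cong E\oplus \mathbf{R}(q|_V)_\ast(\cdots)$, which exhibits $E$ as a direct summand.

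For general $s$, reduce to a quasi-compact $U$ by noting that, since $U$ is Noetherian, it is already quasi-compact, so $s$ is quasi-compact. The main obstacle is the extension step along $j$. For Noetherian algebraic spaces, coherent sheaves extend along open immersions (Stacks Project), and bounded complexes extend up to a direct summand, e.g.\ via $E\oplus E[1]$ in the Thomason–Trobaugh style. This is enough, because we only need $E$ as a direct summand of $\mathbf{L}s^\ast E'$.
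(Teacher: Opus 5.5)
Your main line (the ``cleaner route'') is correct and is essentially the paper's proof. Both factor $s$ via Zariski's Main Theorem as an open immersion followed by a finite morphism, extend $E$ across the open immersion, push forward along the finite map, apply flat base change, and split off the graph of the open immersion as an open-and-closed piece of $U\times_X\bar U$. You do this splitting directly, whereas the paper phrases it through a recollement and a decomposition of the structure sheaf.

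The one input you black-box, extension along the open immersion, is exactly what the paper proves in its appendix (\Cref{lem:verdier}). That lemma gives $j^\ast\bar E\cong E$ on the nose, so the $E\oplus E[1]$ device is not needed here; it is a $K_0$-obstruction phenomenon for perfect complexes.
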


This recovers \cite[Theorems 3.5 \& 4.2]{Balmer:2016} in the case of schemes. 
The proof factors $s$, by Zariski's Main Theorem, as an open immersion followed by a finite morphism. 
A Verdier localization argument extends bounded coherent complexes across the open immersion. 
After flat base change, the graph of the open immersion determines an open-and-closed component of a certain fiber product, which produces the desired direct summand.
We believe that \Cref{prop:etale_verdier_localization} will be of independent interest to others. 

Combing \Cref{prop:t_exactness_flat_pullback,prop:etale_verdier_localization} yields a restrictability criterion.

\begin{theorem}
    \label{thm:restrict_iff_weak_cousin}
    Let $X$ be a Noetherian algebraic space admitting a dualizing complex and $\phi$ be a Thomason filtration on $X$. 
    Then $\mathcal{U}_\phi$ restricts to $D^b_{\operatorname{coh}}(X)$ if, and only if, $\phi$ satisfies the weak Cousin condition.
\end{theorem}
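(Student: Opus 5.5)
We reduce to the affine case along an étale presentation $s\colon U\to X$ with $U$ an affine scheme, and use the affine classification of \cite[Theorem 6.9]{AlonsoTarrio/JeremiasLopez/Saorin:2010}. Pullback of a dualizing complex along an étale map is dualizing, so $U=\operatorname{Spec}(R)$ with $R$ Noetherian admitting a dualizing complex. Writing $\mathcal{U}_\phi=\overline{\langle \mathcal{S}\rangle}^{(-\infty,0]}_{\otimes}$ for a suitable $\mathcal{S}\subseteq D^b_{\operatorname{coh}}(X)$ (e.g.\ Koszul-type objects supported on the closed subsets $\phi(n)$), the image $\mathbf{L}s^\ast\mathcal{S}$ generates $\mathcal{U}_{s^{-1}\phi}$ on $U$. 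Here $s^{-1}\phi(n)\colonequals s^{-1}(\phi(n))$, which is again a Thomason filtration. We first record the topological fact that $\phi$ satisfies the weak Cousin condition on $|X|$ if and only if $s^{-1}\phi$ does on $U$. The condition only involves immediate specializations $x\rightsquigarrow y$, and these lift along $s$ and are detected on strict henselizations, or on étale local rings; étale maps preserve dimension of local rings and are open and surjective on the relevant specializations. So the equivalence amounts to checking specializations pointwise through $s$.

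($\Rightarrow$) Suppose $\mathcal{U}_\phi$ restricts to $D^b_{\operatorname{coh}}(X)$, and let $F\in D^b_{\operatorname{coh}}(U)$. By \Cref{prop:etale_verdier_localization} (an étale presentation from an affine scheme is separated), $F$ is a summand of $\mathbf{L}s^\ast E$ for some $E\in D^b_{\operatorname{coh}}(X)$. By \Cref{prop:t_exactness_flat_pullback}, $\mathbf{L}s^\ast$ is $t$-exact for $\mathcal{U}_\phi$ and $\mathcal{U}_{s^{-1}\phi}$, so it commutes with truncations:
\[
\tau^{\leq 0}_{s^{-1}\phi}\mathbf{L}s^\ast E\cong \mathbf{L}s^\ast\tau^{\leq0}_\phi E\in D^b_{\operatorname{coh}}(U).
\]
Truncation is additive, so it preserves the summand decomposition, and hence $\tau^{\leq0}_{s^{-1}\phi}F$ is a summand of a bounded coherent complex. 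It is therefore in $D^b_{\operatorname{coh}}(U)$, since that subcategory is thick. Thus $\mathcal{U}_{s^{-1}\phi}$ restricts on $U$. By the affine theorem, $s^{-1}\phi$ is weak Cousin, and so is $\phi$ by the topological comparison.

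($\Leftarrow$) If $\phi$ is weak Cousin, then so is $s^{-1}\phi$, and by the affine result $\mathcal{U}_{s^{-1}\phi}$ restricts to $D^b_{\operatorname{coh}}(U)$. Take $E\in D^b_{\operatorname{coh}}(X)$. Again by $t$-exactness, $\mathbf{L}s^\ast\tau^{\leq0}_\phi E\cong\tau^{\leq0}_{s^{-1}\phi}\mathbf{L}s^\ast E$ has bounded coherent cohomology. Being in $D^b_{\operatorname{coh}}$ is étale local, since $s$ is faithfully flat onto $X$ and cohomology sheaves commute with flat pullback. Hence $\tau^{\leq0}_\phi E\in D^b_{\operatorname{coh}}(X)$, so $\mathcal{U}_\phi$ restricts.

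The main obstacle is the topological comparison of the weak Cousin condition along $s$, in particular lifting a weak-Cousin violation on $U$ back to $|X|$ when $|X|$ is not a scheme's spectrum. My first approach is to work with points of $|X|$ and the étale local rings $\mathcal{O}_{X,\bar x}^{sh}$, and to express the weak Cousin condition as a statement about heights of primes in local rings. Since $\mathcal{O}_{U,u}\to\mathcal{O}_{X,\bar x}^{sh}$ agree after strict henselization, and strict henselization preserves immediate specializations (primes of height-one difference), the condition transfers in both directions.
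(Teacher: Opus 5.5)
The architecture of your proposal matches the paper's. You use an \'{e}tale presentation $s\colon U\to X$ by an affine scheme, and $t$-exactness of $\mathbf{L}s^\ast$ (\Cref{prop:t_exactness_flat_pullback}) so that truncations commute with pullback. For $(\Rightarrow)$ you combine essential density (\Cref{prop:etale_verdier_localization}) with additivity of truncation and thickness of $D^b_{\operatorname{coh}}(U)$. For $(\Leftarrow)$ you descend boundedness and coherence along the faithfully flat $s$. Both directions then use the affine classification on $U$ and a comparison of the weak Cousin condition for $\phi$ and $s^{-1}\phi$, which is the paper's \Cref{lem:lifting_descending_immediate_generalizations,lem:wc_across_z_via_etale_presentation}. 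Quoting \cite[Theorem 6.9]{AlonsoTarrio/JeremiasLopez/Saorin:2010} directly for $U=\operatorname{Spec}(R)$ is legitimate, and slightly more direct than the paper's detour through CM-excellence and \cite[Theorem 3.5]{Takahashi:2023}. You also make explicit the identification $\overline{\langle \mathbf{L}s^\ast\mathcal{S}\rangle}^{(-\infty,0]}_{\otimes}=\mathcal{U}_{s^{-1}\phi}$, which the paper uses tacitly. The generators should be shifted perfect complexes supported on closed subsets of $\phi(n)$, which is only specialization closed; then use the support formula of \cite[Theorem 1.4]{Hrbek/Lank/Pizzirani:2025} together with flatness of $s$.

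There is a gap in the step you flag as the main obstacle, specifically its descent half: that an immediate specialization $u\rightsquigarrow u^\prime$ in $|U|$ maps to an immediate specialization in $|X|$. This is what $(\Leftarrow)$ needs. Otherwise the weak Cousin condition for $\phi$ only gives $s(u)\in\phi(n-k)$, with $k$ the length of a chain of immediate specializations from $s(u)$ to $s(u^\prime)$. The lifting half, used for $(\Rightarrow)$, is fine: generalizations lift along flat maps, and \'{e}tale fibres have no nontrivial specializations, so a lift of an immediate specialization is immediate.

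Your justification for the descent half (strict henselization preserves immediate specializations, \'{e}tale maps preserve local dimensions) never uses the dualizing complex, and in that generality it is false. Take a two-dimensional Noetherian local domain $A$ whose normalization is finite with maximal ideals of heights $1$ and $2$ (Nagata). Then $A^h$ has a minimal prime of coheight $1$. In a suitable \'{e}tale neighbourhood, the generic point of that branch specializes immediately to the closed point, but its image, the generic point of $\operatorname{Spec}(A)$, does not. Heights are also the wrong invariant even when a dualizing complex exists. On $\{z=0\}\cup\{x=y=0\}\subseteq\mathbb{A}^3_k$, the generic point of the line specializes immediately to the origin, yet the two local rings have dimensions $0$ and $2$.

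The missing input is the dimension function $\delta$ attached to the dualizing complex \cite[Tag 0E53]{stacks-project}. With $\omega_U=\mathbf{L}s^\ast\omega_X$ one has $\delta_U=\delta_X\circ|s|$. This holds because for an \'{e}tale map of schemes the local homomorphisms $\mathcal{O}_{W,w}\to\mathcal{O}_{V,v}$ are flat with $\mathfrak{m}_w\mathcal{O}_{V,v}=\mathfrak{m}_v$, so the shift normalizing the dualizing complex does not change. Now $\delta_X$ strictly decreases along nontrivial specializations of $|X|$ (lift them to $U$). Meanwhile $\delta_U(u)=\delta_U(u^\prime)+1$ for an immediate $u\rightsquigarrow u^\prime$, so there is no room for a point strictly between $s(u)$ and $s(u^\prime)$. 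The paper brings in the dualizing complex at this same point. If you run the count with $\delta$ itself rather than with dimensions of local rings, the rest of your argument goes through.
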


For the first implication, we show that restrictability ascends to an \'{e}tale presentation by an affine scheme. 
The affine classification of \cite{Takahashi:2023} implies the pulled-back Thomason filtration is weak Cousin, and we show this condition descents to $|X|$. 
Conversely, we showed that truncation triangles behave nicely under derived pullback along \'{e}tale presentations, which allows to descend boundedness and pseudocoherence of truncations. 

To deduce \Cref{thm:dualizing_complex_classification}, we had to also for any $\mathcal{S}\subseteq D^b_{\operatorname{coh}}(X)$, $\overline{\langle \mathcal{S} \rangle}^{(-\infty,0]}_{\otimes}$ is compactly generated.
See \Cref{prop:aisles_are_compactly_generated}.
This extends \cite[Theorem 3.10]{AlonsoTarrio/JeremiasLopez/Saorin:2010} to the setting of algebraic spaces. 
However, this is crucial in order for us to leverage the classification on $D_{\operatorname{qc}}(X)$ via \cite{Hrbek/Lank/Pizzirani:2025} to study the tensor $t$-structures on $D^b_{\operatorname{coh}}$. 

\begin{remark}
    \Cref{thm:restrict_iff_weak_cousin} is stated only for the absolute case $D^b_{\operatorname{coh}}(X)$.
    A corresponding result for the relative case with $D^b_{\operatorname{coh},Z}(X)$ require a support-theoretic upgrade of \Cref{prop:etale_verdier_localization}.
    In fact, for schemes, a relative statement appears in \cite{Clark/Lank/ManaliRahul/Parker:2024} but our methods remain different, nor rely on loc.\ cit.
    This appears difficult because support needs to be suitably controlled along lifts in the factorization from Zariski's Main Theorem.

    An alternative approach can be to apply the induction principle \cite[\href{https://stacks.math.columbia.edu/tag/08GL}{Tag 08GL}]{stacks-project}.
    Although, the issue that is faced occurs when immediate generalizations occur with points in the open subspace and its complement. 
    It is not clear how to control the immediate generalizations for this special case.
\end{remark}

\begin{ack}
    Lank was supported under the ERC Advanced Grant 101095900-TriCatApp, and thanks the Max Planck Institute for Mathematics and the Institute of Mathematics of the Czech Academy of Sciences for their hospitality. The author greatly appreciates discussions with Timothy De Deyn, El\'{i}as Guisado Villalgordo, Michal Hrbek, Giovanna Le Gros, Kabeer Manali Rahul, Sergio Pavon, and Fei Peng.
\end{ack}

\section{Preliminaries}
\label{sec:preliminaries}

Let $\mathcal{T}$ be a triangulated category. 
By `strictly full' subcategory, we mean a full subcategory closed under isomorphisms. 

\subsection{Conventions/notation}

We follow \cite{stacks-project} for algebraic spaces and their derived categories. 
See \cite[\S 2.3]{GuisadoVillaalgordo/Lank:2026} for a summary.
Let $X$ be a quasi-compact quasi-separated algebraic space. 
$\operatorname{Mod}(X)$ is the Grothendieck abelian category of sheaves of $\mathcal{O}_X$-modules on the small \'{e}tale site $X_{\textrm{\'{e}tale}}$ of $X$ \cite[\href{https://stacks.math.columbia.edu/tag/03LT}{Tag 03LT}]{stacks-project}. 
$\operatorname{Qcoh}(X)$ is the full subcategory of $\operatorname{Mod}(X)$ consisting of quasi-coherent sheaves. 
$D(X) := D(\operatorname{Mod}(X))$ is the derived category of $\operatorname{Mod}(X)$. 
$D_{\operatorname{qc}}(X)$ is the full subcategory of $D(X)$ consisting of complexes with quasi-coherent cohomology sheaves. 
$\operatorname{Perf}(X)$ is the full subcategory of perfect complexes in $D_{\operatorname{qc}}(X)$. 
If $X$ is Noetherian, then $\operatorname{coh}(X)$ is the full subcategory of $\operatorname{Mod}(X)$ consisting of coherent sheaves and $D^b_{\operatorname{coh}}(X)$ denotes the full subcategory of $D(X)$ consisting of bounded pseudocoherent complexes.

\subsection{\texorpdfstring{$t$}{t}-structures}
\label{sec:prelim_$t$-structures}

A \textbf{$t$-structure} on $\mathcal{T}$ is a pair of strictly full subcategories $\tau = (\mathcal{T}^{\leq 0}, \mathcal{T}^{\geq 0})$ of $\mathcal{T}$ such that:
\begin{itemize}
    \item $\operatorname{Hom}(A,B) = 0$ for all $A \in \mathcal{T}^{\leq 0}$ and $B \in \mathcal{T}^{\geq 0}[-1]$,
    \item $\mathcal{T}^{\leq 0}[1] \subseteq \mathcal{T}^{\leq 0}$ and $\mathcal{T}^{\geq 0}[-1] \subseteq \mathcal{T}^{\geq 0}$,
    \item for every $E \in \mathcal{T}$, there exists a distinguished triangle
    \begin{displaymath}
        \tau^{\leq 0} E \to E \to \tau^{\geq 1} E \to (\tau^{\leq 0} E)[1]
    \end{displaymath}
    with $\tau^{\leq 0} E \in \mathcal{T}^{\leq 0}$ and $\tau^{\geq 1} E \in \mathcal{T}^{\geq 0}[-1]$.
\end{itemize}
See \cite{Keller/Vossieck:1988,Beilinson/Berstein/Deligne/Gabber:2018} for details. 
The distinguished triangle above is unique up to unique isomorphism. 
It is called the \textbf{truncation triangle} of $E$ with respect to $\tau$. 
For any $n \in \mathbb{Z}$, $(\mathcal{T}^{\leq n}, \mathcal{T}^{\geq n})$ is a $t$-structure on $\mathcal{T}$, where $\mathcal{T}^{\leq n} := \mathcal{T}^{\leq 0}[-n]$ and $\mathcal{T}^{\geq n} := \mathcal{T}^{\geq 0}[-n]$. 
Consider an exact functor $f \colon \mathcal{T}_1 \to \mathcal{T}_2$ between triangulated categories. 
Suppose each $\mathcal{T}_i$ is equipped with $t$-structures $(\mathcal{T}_i^{\leq 0}, \mathcal{T}_i^{\geq 0})$. We say that $f$ is \textbf{right $t$-exact} if $f(\mathcal{T}_1^{\leq 0}) \subseteq \mathcal{T}_2^{\leq 0}$,
and \textbf{left $t$-exact} if 
$f(\mathcal{T}_1^{\geq 0}) \subseteq \mathcal{T}_2^{\geq 0}$. 
If both conditions hold, then $f$ is \textbf{$t$-exact}. 

\subsection{(Pre)aisles}
\label{sec:prelim_$t$-structures_pre_aisle}

A strictly full subcategory $\mathcal{A} \subseteq \mathcal{T}$ is a \textbf{preaisle} if $\mathcal{A}$ is closed under positive shifts and extensions. 
Additionally, it is an \textbf{aisle} if the inclusion $\mathcal{A} \to \mathcal{T}$ admits a right adjoint.
In fact, a subcategory $\mathcal{A}\subseteq\mathcal{T}$ is an aisle if, and only if, the pair $(\mathcal{A},\mathcal{A}^\bot[1])$ is a $t$-structure. 
Here,
\begin{displaymath}
    \mathcal{A}^\perp := \{ T \in \mathcal{T} : \forall A \in \mathcal{A}, \operatorname{Hom}(A,T) = 0  \}.
\end{displaymath}
There is similar notation for `left orthogonals', 
\begin{displaymath}
    {}^\perp \mathcal{A} := \{ T \in \mathcal{T} : \forall A \in \mathcal{A}, \operatorname{Hom}(T,A) = 0  \}.
\end{displaymath}
Respectively, if $(\mathcal{T}^{\leq 0}, \mathcal{T}^{\geq 0})$ is a $t$-structure, $\mathcal{T}^{\leq 0}$ and $\mathcal{T}^{\geq 0}$ are called the \textbf{aisle} and \textbf{coaisle} of the $t$-structure. 
By \cite[Lemma 1.4]{AlonsoTarrio/Lopez/Salorio:2003}, aisles are closed under direct summands of coproducts when they exist in the ambient triangulated category. A preaisle $\mathcal{A}$ is called \textbf{total} if $\mathcal{A} = {}^\perp (\mathcal{A}^\perp)$. Given $\mathcal{C}\subseteq \mathcal{T}$, we write $\mathcal{C}(-\infty,0]$ for the stricly full subcategory of objects of the form $C[n]$ where $n\geq 0$ and $C\in \mathcal{C}$. 

\subsection{`Big' (pre)aisles}
\label{sec:prelim_$t$-structures_big}

Assume that $\mathcal{T}$ admits small coproducts. 
A preaisle $\mathcal{A} \subseteq \mathcal{T}$ is called \textbf{cocomplete} if it is closed under all coproducts in $\mathcal{T}$. 
Given $\mathcal{S}\subseteq \mathcal{T}$, $\overline{\langle \mathcal{S} \rangle}^{(-\infty, 0]}$ is defined to be the smallest cocomplete preaisle containing $\mathcal{S}$. 
If $\mathcal{T}$ is well generated (e.g.\ if it is compactly generated), $\overline{\langle \mathcal{S} \rangle}^{(-\infty, 0]}$ is an aisle whenever $\mathcal{S}$ is essentially small. 
See \cite[Theorem 2.3]{Neeman:2021}. 
An aisle $U$ on $\mathcal{T}$ is \textbf{compactly generated} if there exists a collection of compact objects $\mathcal{P} \subseteq \mathcal{T}^c$ satisfying $\overline{\langle \mathcal{P} \rangle}^{(-\infty, 0]} = U$. 
Hence, we say a $t$-structure is \textbf{compactly generated} if its aisle is as such.  

\begin{example}
    \label{ex:standard_aisle_cpt_gen_for_schemes}
    Let $X$ be a quasi-compact quasi-separated algebraic space. 
    By \cite[Proposition 1.1]{Hrbek/Lank/Pizzirani:2025}', the standard aisle $D^{\leq 0}_{\operatorname{qc}}(X)$ is compactly generated by $\operatorname{Perf}(X) \cap D^{\leq 0}_{\operatorname{qc}}(X)$. 
    See also \cite[Proposition 2.3]{Herbera/Hrbek/LeGros:2025} for the case of Noetherian schemes.
\end{example}

\subsection{Tensor variants}
\label{sec:prelim_$t$-structures_tensor}

Assume that $\mathcal{T}$ is a tensor triangulated category with tensor $\otimes$ and unit $1$. 
Choose a preaisle $\mathcal{P}^{\leq 0} \subseteq \mathcal{T}$ satisfying $\mathcal{P}^{\leq 0} \otimes \mathcal{P}^{\leq 0} \subseteq \mathcal{P}^{\leq 0}$ and $1 \in \mathcal{P}^{\leq 0}$. 
A \textbf{tensor (pre)aisle} (or \textbf{$\otimes$-(pre)aisle}), with respect to $\mathcal{P}^{\leq 0}$, is a (pre)aisle $U \subseteq \mathcal{T}$ which is closed under tensoring by $\mathcal{P}^{\leq 0}$, i.e.\ $\mathcal{P}^{\leq 0} \otimes U \subseteq U$. 
If clear from context, we omit `with respect to $\mathcal{P}^{\leq 0}$'. 
We say a $t$-structure on $\mathcal{T}$ is \textbf{tensor} if its aisle is such. 

If $\mathcal{T}$ admits small coproducts and $\mathcal{C} \subseteq \mathcal{T}$, $\overline{ \langle \mathcal{C} \rangle}^{(-\infty,0]}_{\otimes}$ is defined to be the smallest cocomplete $\otimes$-preaisle containing $\mathcal{C}$.

\begin{remark}
    \label{rmk:HLP_HLLP_proxy_aisle}
    Let $X$ be a quasi-compact quasi-separated algebraic space. 
    Consider any set $\mathcal{C}\subseteq D_{\operatorname{qc}}(X)$. 
    By \Cref{ex:standard_aisle_cpt_gen_for_schemes}, $D^{\leq 0}_{\operatorname{qc}}(X)$ is compactly generated by $D^{\leq 0}_{\operatorname{qc}}(X) \cap \operatorname{Perf}(X)$. 
    Then, from \cite[Lemmas 2.6 \& 2.8]{Hrbek/Lank/LeGros/Pavon:2026}, it follows the tensor aisle is given by
    \begin{displaymath}
        \overline{\langle \mathcal{C} \rangle}^{(-\infty,0]}_{\otimes} = \overline{\langle (D^{\leq 0}_{\operatorname{qc}}(X) \cap \operatorname{Perf}(X)) \otimes^{\mathbf{L}} \mathcal{C} \rangle}^{(-\infty,0]}.
    \end{displaymath}
\end{remark}

While the following uses results proved later on, we record it here.

\begin{lemma}
    \label{lem:dbcoh_tensor_via_perfect_tensor}
    Let $X$ be a Noetherian algebraic space.
    Then an aisle $\mathcal{A}$ on $D^b_{\operatorname{coh}}(X)$ being tensor is equivalent to it being closed under the tensor action by $\operatorname{Perf}(X)\cap D^{\leq 0}_{\operatorname{qc}}(X)$. 
\end{lemma}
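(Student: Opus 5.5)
One implication is immediate. $\operatorname{Perf}(X)\cap D^{\leq 0}_{\operatorname{qc}}(X)$ is contained in $D^{\leq 0}_{\operatorname{coh}}(X)$, since on a Noetherian algebraic space perfect complexes are bounded pseudocoherent. So a tensor aisle $\mathcal{A}$ on $D^b_{\operatorname{coh}}(X)$ is in particular closed under the action of $\operatorname{Perf}(X)\cap D^{\leq 0}_{\operatorname{qc}}(X)$. The content is the converse. Assume $\mathcal{A}$ is an aisle on $D^b_{\operatorname{coh}}(X)$ with $P\otimes^{\mathbf{L}}\mathcal{A}\subseteq\mathcal{A}$ for all such $P$. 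We must show $F\otimes^{\mathbf{L}}A\in\mathcal{A}$ for every $F\in D^{\leq 0}_{\operatorname{coh}}(X)$ and $A\in\mathcal{A}$ for which the product lies in $D^b_{\operatorname{coh}}(X)$.

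I would pass to the big category. Let $\mathcal{U}:=\overline{\langle\mathcal{A}\rangle}^{(-\infty,0]}_{\otimes}$ on $D_{\operatorname{qc}}(X)$. By \Cref{rmk:HLP_HLLP_proxy_aisle},
\begin{displaymath}
    \mathcal{U}=\overline{\langle (D^{\leq 0}_{\operatorname{qc}}(X)\cap\operatorname{Perf}(X))\otimes^{\mathbf{L}}\mathcal{A}\rangle}^{(-\infty,0]}.
\end{displaymath}
By hypothesis $(D^{\leq 0}_{\operatorname{qc}}(X)\cap\operatorname{Perf}(X))\otimes^{\mathbf{L}}\mathcal{A}\subseteq\mathcal{A}$, so in fact $\mathcal{U}=\overline{\langle\mathcal{A}\rangle}^{(-\infty,0]}$. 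The left side is, by construction, a cocomplete tensor preaisle. So $F\otimes^{\mathbf{L}}A\in\mathcal{U}$ for $F\in D^{\leq 0}_{\operatorname{qc}}(X)$ and $A\in\mathcal{A}$, using that $D^{\leq 0}_{\operatorname{qc}}(X)$ is generated by $\operatorname{Perf}\cap D^{\leq 0}_{\operatorname{qc}}$ (\Cref{ex:standard_aisle_cpt_gen_for_schemes}) and that $-\otimes^{\mathbf{L}}A$ preserves coproducts and extensions.

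It then remains to prove that $\mathcal{U}\cap D^b_{\operatorname{coh}}(X)=\mathcal{A}$. This is the main obstacle, and the place where results proved later are used. By \Cref{prop:aisles_are_compactly_generated}, $\mathcal{U}$ is a compactly generated aisle, hence of the form $\mathcal{U}_\phi$ for a Thomason filtration $\phi$ by \cite{Hrbek/Lank/Pizzirani:2025}. Since $\mathcal{A}$ is an aisle of $D^b_{\operatorname{coh}}(X)$, I would argue that $\mathcal{U}$ restricts and its restriction recovers $\mathcal{A}$, via the standard argument of \cite[Theorem 3.10]{AlonsoTarrio/JeremiasLopez/Saorin:2010}.

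Concretely, take $E\in\mathcal{U}\cap D^b_{\operatorname{coh}}(X)$ and its truncation triangle $A'\to E\to B\to$ in $D^b_{\operatorname{coh}}(X)$ with respect to $\mathcal{A}$. Then $B\in\mathcal{A}^{\perp}$ within $D^b_{\operatorname{coh}}$. I would show that $B$ is also right orthogonal to $\mathcal{U}=\overline{\langle\mathcal{A}\rangle}^{(-\infty,0]}$ in $D_{\operatorname{qc}}(X)$, so the map $E\to B$ vanishes and $E$ is a summand of $A'$, hence lies in $\mathcal{A}$. The key input is that $\operatorname{Hom}(-,B)$ converts the coproducts and extensions building $\overline{\langle\mathcal{A}\rangle}^{(-\infty,0]}$ into products and exact sequences; since $\mathcal{A}$ generates $\mathcal{U}$, vanishing on $\mathcal{A}$ propagates to all of $\mathcal{U}$. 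This orthogonality step is routine.

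With $\mathcal{U}\cap D^b_{\operatorname{coh}}(X)=\mathcal{A}$ in hand, for $F\in D^{\leq 0}_{\operatorname{coh}}(X)$ with $F\otimes^{\mathbf{L}}A\in D^b_{\operatorname{coh}}(X)$ we get $F\otimes^{\mathbf{L}}A\in\mathcal{U}\cap D^b_{\operatorname{coh}}(X)=\mathcal{A}$, which is the tensor condition.
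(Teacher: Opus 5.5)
Your proposal is correct and follows essentially the same route as the paper. Both use \Cref{rmk:HLP_HLLP_proxy_aisle} together with closure of $\mathcal{A}$ under $\operatorname{Perf}(X)\cap D^{\leq 0}_{\operatorname{qc}}(X)$ to identify $\overline{\langle\mathcal{A}\rangle}^{(-\infty,0]}_{\otimes}$ with $\overline{\langle\mathcal{A}\rangle}^{(-\infty,0]}$, then show that the $D^b_{\operatorname{coh}}$-truncation $\tau^{\geq 1}E\in\mathcal{A}^\perp$ is right orthogonal to this big aisle, which is the easy half of \Cref{lem:aisle_via_double_orthogonal}; in your propagation argument use ${}^\perp\{B[-n] : n\geq 0\}$, a cocomplete preaisle containing $\mathcal{A}$ because $\mathcal{A}^\perp$ is closed under negative shifts. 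That orthogonality forces $E\in\mathcal{A}$, and the detour through \Cref{prop:aisles_are_compactly_generated} and Thomason filtrations is not needed.
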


\begin{proof}
    It is easy to see that 
    \begin{displaymath}
        \operatorname{Perf}(X)\cap D^{\leq 0}_{\operatorname{qc}}(X)\subseteq D^{\leq 0}_{\operatorname{coh}}(X).
    \end{displaymath}
    If $\mathcal{A}$ is a tensor aisle on $D^b_{\operatorname{coh}}(X)$, then
    \begin{displaymath}
        (\operatorname{Perf}(X)\cap D^{\leq 0}_{\operatorname{qc}}(X) ) \otimes^{\mathbf{L}} \mathcal{A} \subseteq \mathcal{A}.
    \end{displaymath}
    We prove the converse. 
    Let $E\in D^{\leq 0}_{\operatorname{coh}}(X) \otimes^{\mathbf{L}} \mathcal{A}$ where $\mathcal{A}$ is an aisle on $D^b_{\operatorname{coh}}(X)$ which is closed under the tensor action by $\operatorname{Perf}(X) \cap D^{\leq 0}_{\operatorname{qc}}(X)$. 
    Consider the truncation triangle of $E$ with respect to $\mathcal{A}$ on $D^b_{\operatorname{coh}}(X)$,
    \begin{displaymath}
        \tau^{\leq 0} E \to E \to \tau^{\geq 1} E \to \tau^{\leq 0} E.
    \end{displaymath}
    By \Cref{prop:aisles_are_compactly_generated} and \Cref{rmk:HLP_HLLP_proxy_aisle},
    \begin{displaymath}
        \overline{\langle \mathcal{A}^\prime \rangle}^{(-\infty,0]}_{\otimes} = \overline{\langle (D^{\leq 0}_{\operatorname{qc}}(X) \cap \operatorname{Perf}(X)) \otimes^{\mathbf{L}} \mathcal{A}^\prime \rangle}^{(-\infty,0]}.
    \end{displaymath}
    We claim the distinguished triangle above corresponds to the truncation triangle for $E$ with respect to $\overline{\langle \mathcal{A} \rangle}^{(-\infty,0]}_{\otimes}$ on $D_{\operatorname{qc}}(X)$. 
    Note that $\tau^{\geq 1} E \in \mathcal{A}^\perp \cap D^b_{\operatorname{coh}}(X)$.
    By \Cref{lem:aisle_via_double_orthogonal}, $\tau^{\geq 1} E\in (\overline{\langle \mathcal{A} \rangle}^{(-\infty,0]}_{\otimes})^\perp$. 
    However, $E\in D^{\leq 0}_{\operatorname{coh}}(X) \otimes^{\mathbf{L}} \mathcal{A}$ and so $E\in \overline{\langle \mathcal{A} \rangle}^{(-\infty,0]}_{\otimes}$. 
    Thus, the morphism $\tau^{\leq 0} E \to E$ is an isomorphism, which implies $E\in \mathcal{A}$.
\end{proof}

\subsection{Thomason filtrations}
\label{sec:prelim_Thomason}

We recall Thomason filtrations on a Noetherian algebraic space $X$. 
Let $\operatorname{Spcl}(|X|)$ be the collection of specialization closed subsets of $|X|$. 
A function $\phi \colon \mathbb{Z} \to \operatorname{Spcl}(|X|)$ is called a \textbf{Thomason filtration} on $X$ if $\phi(n+1)\subseteq \phi (n)$ for each $n\in \mathbb{Z}$. 

Consider a morphism $f\colon Y\to X$ of Noetherian algebraic spaces. 
Fix a Thomason filtration $\phi$ on $X$. 
Define $|f|^{-1}\phi$ to be the function $\mathbb{Z} \to \operatorname{Spcl}(|Y|)$ given by the rule $n\mapsto |f|^{-1}(\phi(n))$. 
It is straightforward to check that $|f|^{-1}(\phi(n))$ is specialization closed because $f$ induces a continuous function $|Y|\to |X|$.

A generalization $p\rightsquigarrow q$ (i.e.\ $q\in \overline{\{ p\}}$) is \textbf{immediate} if $p\not=q$ and there does not exist a point $s$ in $|X|$ such that $p\rightsquigarrow s \rightsquigarrow q$ where $p\not=s$.
We say that a Thomason filtration $\phi \colon \mathbb{Z} \to \operatorname{Spcl}(|X|)$ is said to satisfy the \textbf{weak Cousin condition} if for every integer $n$, the set $\phi (n-1)$ contains all direct generalizations of each point in $\phi (n)$ (i.e.\ for each direct generalization  $p \rightsquigarrow q$ with $q$ in $\phi (n)$, one has $p$ is in $\phi (n-1)$).

By \cite[Theorem 1.4]{Hrbek/Lank/Pizzirani:2025}, there exists a one-to-one correspondence:
\begin{displaymath}
    \begin{aligned}
        \{ \otimes\textrm{-aisle on } & D_{\operatorname{qc}}(X) \textrm{ generated by } \mathcal{P} \subseteq \operatorname{Perf}(X) \} 
        \\&\iff \{ \textrm{Thomason filtrations on } X\}.
    \end{aligned}
\end{displaymath}
In particular, for any $\otimes$-aisle on $D_{\operatorname{qc}}(X)$ generated by $\mathcal{P} \subseteq \operatorname{Perf}(X)$, we assign to it the Thomason filtration given by the rule (which is independent of the choice for $\mathcal{P}$)
\begin{displaymath}
    n\mapsto \bigcup_{\substack{j\geq n\\ P\in \mathcal{P}}} \operatorname{Supp}(\mathcal{H}^j (P)).
\end{displaymath}
Given a Thomason filtration $\phi$ on $X$, we denote the \textbf{corresponding} compactly generated tensor $t$-structure on $D_{\operatorname{qc}}(X)$ by $(\mathcal{U}_\phi,\mathcal{V}_\phi)$, and by $\tau^{\leq 0}_\phi$ and $\tau^{\geq 1}_{\phi}$ its truncations. 

\section{Exactness}
\label{sec:exactness}

\begin{lemma}
    \label{lem:aisle_via_double_orthogonal}
    Let $\mathcal{T}$ be a well-generated triangulated category. 
    Consider any set $\mathcal{C}\subseteq \mathcal{T}$. 
    If $\mathcal{C}[1]\subseteq \mathcal{C}$, then $\overline{\langle \mathcal{C} \rangle}^{(-\infty,0]} = {}^\perp (\mathcal{C}^\perp)$. 
    In particular, the coaisle is given by $\mathcal{C}^\perp[1]$.
\end{lemma}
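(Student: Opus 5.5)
The plan is to combine Neeman's theorem with the standard characterization of an aisle as a total preaisle. Write $\mathcal{U} := \overline{\langle \mathcal{C} \rangle}^{(-\infty,0]}$. Since $\mathcal{C}$ is a set and $\mathcal{T}$ is well generated, \cite[Theorem 2.3]{Neeman:2021} shows that $\mathcal{U}$ is an aisle. Consequently $(\mathcal{U},\mathcal{U}^\perp[1])$ is a $t$-structure, and the aisle of a $t$-structure is recovered as the left orthogonal of its coaisle shifted back, i.e.\ $\mathcal{U} = {}^\perp(\mathcal{U}^\perp)$. It therefore suffices to prove $\mathcal{U}^\perp = \mathcal{C}^\perp$. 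Both the main claim and the ``in particular'' then follow, since the coaisle is $\mathcal{U}^\perp[1] = \mathcal{C}^\perp[1]$.

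The inclusion $\mathcal{U}^\perp \subseteq \mathcal{C}^\perp$ is immediate from $\mathcal{C}\subseteq\mathcal{U}$. For the reverse inclusion, fix $T\in\mathcal{C}^\perp$ and set $\mathcal{W} := {}^\perp\{T\}$, the class of objects $A$ with $\operatorname{Hom}(A,T)=0$. I would check three properties of $\mathcal{W}$:
\begin{itemize}
\item it is closed under extensions, by the long exact $\operatorname{Hom}(-,T)$ sequence;
\item it is closed under coproducts, since $\operatorname{Hom}(\coprod A_i,T)=\prod \operatorname{Hom}(A_i,T)$;
\item it contains $\mathcal{C}$.
\end{itemize}

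The one point needing care is closure under positive shifts, and this is exactly where the hypothesis $\mathcal{C}[1]\subseteq\mathcal{C}$ is used. Rather than showing $\mathcal{W}$ is shift-closed (it need not be), I would consider the smaller class $\mathcal{W}' := \{A : \operatorname{Hom}(A[n],T)=0 \text{ for all } n\geq 0\}$. This class is closed under positive shifts, extensions and coproducts, so it is a cocomplete preaisle. It contains $\mathcal{C}$ because $C[n]\in\mathcal{C}$ for $n\geq 0$ and $T\in\mathcal{C}^\perp$. By minimality of $\mathcal{U}$ we get $\mathcal{U}\subseteq\mathcal{W}'\subseteq\mathcal{W}$, hence $T\in\mathcal{U}^\perp$.

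This gives $\mathcal{U}^\perp=\mathcal{C}^\perp$, and therefore $\mathcal{U} = {}^\perp(\mathcal{U}^\perp) = {}^\perp(\mathcal{C}^\perp)$.
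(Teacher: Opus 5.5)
Your proof is correct and follows essentially the same route as the paper. Both use Neeman's theorem to know that $\overline{\langle \mathcal{C} \rangle}^{(-\infty,0]}$ is a total aisle, and both use minimality against a cocomplete preaisle cut out by orthogonality to $\mathcal{C}^\perp$, with $\mathcal{C}[1]\subseteq\mathcal{C}$ supplying the shift-closure. The only difference is organizational: you first prove $\mathcal{U}^\perp=\mathcal{C}^\perp$ one $T$ at a time via $\mathcal{W}'$, whereas the paper observes directly that ${}^\perp(\mathcal{C}^\perp)$, which is the intersection of your classes $\mathcal{W}'$ over all $T\in\mathcal{C}^\perp$, is a cocomplete preaisle containing $\mathcal{C}$.
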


\begin{proof}
    By \cite[Theorem 2.3]{Neeman:2021}, $\overline{\langle \mathcal{C} \rangle}^{(-\infty,0]}$ is the smallest cocomplete aisle of $\mathcal{T}$ containing $\mathcal{C}$. 
    Since ${}^\perp (\mathcal{C}^\perp)$ is a cocomplete total preaisle containing $\mathcal{C}$, we know that $\overline{\langle \mathcal{C} \rangle}^{(-\infty,0]} \subseteq {}^\perp (\mathcal{C}^\perp)$. 
    The reverse inclusion follows from the fact that $\overline{\langle \mathcal{C} \rangle}^{(-\infty,0]}$ is a total preaisle containing $\mathcal{C}$. 
    It is straightforward to check the claim for the coaisle.
\end{proof}

\begin{lemma}
    \label{lem:pushforward_sends_shift_of_aisle_to_aisle}
    Let $f\colon Y \to X$ be a morphism of quasi-compact quasi-separated algebraic spaces. 
    Then there exists an $n\geq 0$ such that $\mathbf{R}f_\ast D^{\leq -n}_{\operatorname{qc}}(Y) \subseteq D^{\leq 0}_{\operatorname{qc}}(X)$.
\end{lemma}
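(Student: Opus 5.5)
The plan is to identify $\mathbf{R}f_\ast$ on $D_{\operatorname{qc}}$ with a functor of finite cohomological dimension. Concretely, I want an integer $n$ such that $\mathbf{R}f_\ast$ sends $D^{\leq 0}_{\operatorname{qc}}(Y)$ into $D^{\leq n}_{\operatorname{qc}}(X)$. Shifting by $-n$ then gives $\mathbf{R}f_\ast D^{\leq -n}_{\operatorname{qc}}(Y)\subseteq D^{\leq 0}_{\operatorname{qc}}(X)$, which is the statement.

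The input is the Stacks Project result that, for a quasi-compact quasi-separated morphism $f$ of algebraic spaces, $\mathbf{R}f_\ast\colon D_{\operatorname{qc}}(Y)\to D_{\operatorname{qc}}(X)$ is well defined and there is an $n$ with $R^if_\ast\mathcal{F}=0$ for all $i> n$ and all $\mathcal{F}\in\operatorname{Qcoh}(Y)$ (cf.\ the section ``Cohomology and base change / derived pushforward'' for algebraic spaces, Tag 08GG-type statements). Note that $f$ is automatically quasi-compact and quasi-separated, since $X$ and $Y$ are. If one wants to derive this bound rather than cite it, I would work étale locally on $X$. Since $X$ is quasi-compact, choose an étale presentation $V\to X$ with $V$ affine. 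Flat base change lets us replace $X$ by $V$, so we may assume $X$ is affine. Then $Y$ is a quasi-compact quasi-separated algebraic space, and the vanishing of $H^i(Y,\mathcal{F})$ for $i>n$, uniformly in quasi-coherent $\mathcal{F}$, follows from the Čech/induction argument on the number of affines in an étale cover (Tag 073G). I expect this to be the main obstacle, and I would simply cite it.

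Given the uniform bound on quasi-coherent sheaves, I would extend it to complexes. For bounded-above $E\in D^{\leq 0}_{\operatorname{qc}}(Y)$, use the spectral sequence $R^pf_\ast\mathcal{H}^q(E)\Rightarrow \mathcal{H}^{p+q}(\mathbf{R}f_\ast E)$. This is valid for bounded-below complexes, and in the unbounded-below direction it is handled by the finite cohomological dimension of $f_\ast$ on $\operatorname{Qcoh}$, via the Stacks lemma that such $\mathbf{R}f_\ast$ ``has finite cohomological dimension'' and commutes with the truncation limits (Tag 0CRJ-style statements). The terms vanish unless $q\leq 0$ and $0\leq p\leq n$, so $\mathcal{H}^{m}(\mathbf{R}f_\ast E)=0$ for $m>n$.

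Alternatively, to avoid spectral sequences in the unbounded direction, one can use that $\mathbf{R}f_\ast$ commutes with homotopy colimits on $D_{\operatorname{qc}}$ (coproducts, as $f$ is qcqs). Write $E=\operatorname{hocolim}\tau^{\geq -k}E$; the bounded pieces satisfy the bound, and $D^{\leq n}_{\operatorname{qc}}(X)$ is closed under homotopy colimits. Either route concludes.
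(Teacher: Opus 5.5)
Your main route is the paper's proof. The paper invokes Tag 08FA, whose proof takes the uniform bound $N$ on $R^if_\ast$ of quasi-coherent modules from Tag 073G (reduced, as you describe, to an affine \'{e}tale presentation of $X$ and the bounds of Tag 072C). It then extends this bound to all of $D^{\leq 0}_{\operatorname{qc}}(Y)$ and shifts by $N\geq 0$, exactly as you do.

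The unbounded-below step you leave vague is what that Stacks lemma supplies. It uses three facts:
\begin{itemize}
\item $E\cong\mathbf{R}\lim_k\tau^{\geq -k}E$ for $E\in D_{\operatorname{qc}}(Y)$, which is where quasi-coherence enters;
\item $\mathbf{R}f_\ast$ commutes with $\mathbf{R}\lim$;
\item by the bound, $\mathcal{H}^m(\mathbf{R}f_\ast\tau^{\geq -k}E)$ is independent of $k$ once $k$ is large compared to $N-m$.
\end{itemize}

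Your ``alternative'' in the last paragraph is wrong as stated. The canonical truncations form an inverse system $\tau^{\geq -k-1}E\to\tau^{\geq -k}E$, and $E$ is their homotopy \emph{limit}, not a homotopy colimit. So closure of $D^{\leq n}_{\operatorname{qc}}(X)$ under homotopy colimits does not apply. Aisles are also not closed under homotopy limits in general, because of $\lim^1$ terms; that is why the stabilization argument above is needed.

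A colimit-style argument needs different input. One option is brutal truncations of an honest complex of quasi-coherent modules representing $E$, which you would first have to produce. Another is that $D^{\leq 0}_{\operatorname{qc}}(Y)$ is the smallest cocomplete preaisle containing $\operatorname{Perf}(Y)\cap D^{\leq 0}_{\operatorname{qc}}(Y)$ (\Cref{ex:standard_aisle_cpt_gen_for_schemes}), combined with $\mathbf{R}f_\ast$ preserving coproducts on $D_{\operatorname{qc}}$. However, the Stacks proof of that coproduct statement itself relies on the very bound you are proving, so this is not a more elementary route.

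Drop the claim that ``either route concludes'' and keep the first route.
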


\begin{proof}
    This follows from the proof of \cite[\href{https://stacks.math.columbia.edu/tag/08FA}{Tag 08FA}]{stacks-project}.
    With notation in loc.\ cit., $N$ is chosen from \cite[\href{https://stacks.math.columbia.edu/tag/073G}{Tag 073G}]{stacks-project}.
    The latter result reduces to affine source and target with parameters $d_p$ and $p$ coming from \cite[\href{https://stacks.math.columbia.edu/tag/072C}{Tag 072C}]{stacks-project}. 
    In particular, these values are nonnegative, and so $N\geq 0$. 
    Consequently, there exists $N\geq 0$ such that $\mathbf{R}f_\ast D^{\leq 0}_{\operatorname{qc}}(Y) \subseteq D^{\leq N}_{\operatorname{qc}}(X)$. 
    By shifting, it follows that 
    \begin{displaymath}
        \mathbf{R}f_\ast D^{\leq -N}_{\operatorname{qc}}(Y) 
        \subseteq \mathbf{R}f_\ast D^{\leq 0}_{\operatorname{qc}}(Y)[N]
        \subseteq D^{\leq 0}_{\operatorname{qc}}(X).
    \end{displaymath}
    This completes the proof.
\end{proof}

\begin{proposition}
    \label{prop:pushforward_tensor_aisle_twisted_to_aisle}
    Let $f\colon Y\to X$ be a morphism of Noetherian algebraic spaces.
    Let $\mathcal{C}\subseteq D_{\operatorname{qc}}(X)$ be a set such that $\mathcal{C}[1]\subseteq \mathcal{C}$. 
    Then there exists $N\geq 0$ such that 
    \begin{displaymath}
        \mathbf{R}f_\ast \overline{\langle \mathbf{L}f^\ast \mathcal{C}[N] \rangle}^{(-\infty,0]}_{\otimes} \subseteq \overline{\langle \mathcal{C} \rangle}^{(-\infty,0]}_{\otimes}.
    \end{displaymath}
\end{proposition}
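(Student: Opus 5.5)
Take $N \geq 0$ as in \Cref{lem:pushforward_sends_shift_of_aisle_to_aisle}, so that $\mathbf{R}f_\ast D^{\leq -N}_{\operatorname{qc}}(Y) \subseteq D^{\leq 0}_{\operatorname{qc}}(X)$. The plan is to show that the full subcategory
\begin{displaymath}
    \mathcal{W} := \{ E \in D_{\operatorname{qc}}(Y) : \mathbf{R}f_\ast E \in \overline{\langle \mathcal{C} \rangle}^{(-\infty,0]}_{\otimes} \}
\end{displaymath}
contains a set of generators of $\overline{\langle \mathbf{L}f^\ast \mathcal{C}[N] \rangle}^{(-\infty,0]}$ in the sense of \Cref{rmk:HLP_HLLP_proxy_aisle}, and is closed under the operations used to build that aisle.

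First, $\mathcal{W}$ is closed under positive shifts and extensions, because $\mathbf{R}f_\ast$ is exact and the target is a preaisle. It is also closed under coproducts. Indeed, $f$ is a quasi-compact quasi-separated morphism, so $\mathbf{R}f_\ast$ commutes with direct sums on $D_{\operatorname{qc}}$ (Stacks), and the target aisle is cocomplete. Hence $\mathcal{W}$ is a cocomplete preaisle.

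By \Cref{rmk:HLP_HLLP_proxy_aisle} applied on $Y$,
\begin{displaymath}
    \overline{\langle \mathbf{L}f^\ast\mathcal{C}[N] \rangle}^{(-\infty,0]}_{\otimes} = \overline{\langle (D^{\leq 0}_{\operatorname{qc}}(Y)\cap\operatorname{Perf}(Y)) \otimes^{\mathbf{L}} \mathbf{L}f^\ast \mathcal{C}[N] \rangle}^{(-\infty,0]}.
\end{displaymath}
It therefore suffices to show $\mathbf{R}f_\ast(P \otimes^{\mathbf{L}} \mathbf{L}f^\ast C[N]) \in \overline{\langle \mathcal{C}\rangle}^{(-\infty,0]}_{\otimes}$ for every $P \in \operatorname{Perf}(Y)\cap D^{\leq 0}_{\operatorname{qc}}(Y)$ and $C \in \mathcal{C}$. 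The projection formula (Stacks, valid for qcqs morphisms of algebraic spaces and $D_{\operatorname{qc}}$) gives
\begin{displaymath}
    \mathbf{R}f_\ast(P \otimes^{\mathbf{L}} \mathbf{L}f^\ast C[N]) \cong \mathbf{R}f_\ast(P[N]) \otimes^{\mathbf{L}} C .
\end{displaymath}
Since $P[N] \in D^{\leq -N}_{\operatorname{qc}}(Y)$, the choice of $N$ gives $\mathbf{R}f_\ast(P[N]) \in D^{\leq 0}_{\operatorname{qc}}(X)$. The tensor aisle $\overline{\langle \mathcal{C}\rangle}^{(-\infty,0]}_{\otimes}$ is closed under tensoring with $D^{\leq 0}_{\operatorname{qc}}(X)$ and contains $C$, so the displayed object lies in it. Thus every generator lies in $\mathcal{W}$, and since $\mathcal{W}$ is a cocomplete preaisle, the whole aisle lies in $\mathcal{W}$.

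The main point to check is the tensor-closure step. Being a $\otimes$-preaisle is defined with respect to $\mathcal{P}^{\leq 0} = D^{\leq 0}_{\operatorname{qc}}(X)$ in the introduction's convention, so closure under tensoring with the non-perfect object $\mathbf{R}f_\ast(P[N])$ is immediate from the definition. If instead only perfect connective tensoring were available, I would first write $\mathbf{R}f_\ast(P[N])$ as an object of $\overline{\langle D^{\leq 0}_{\operatorname{qc}}(X)\cap \operatorname{Perf}(X)\rangle}^{(-\infty,0]}$ using \Cref{ex:standard_aisle_cpt_gen_for_schemes}. I would then note that $\{ F : F\otimes^{\mathbf{L}} C \in \overline{\langle\mathcal{C}\rangle}^{(-\infty,0]}_{\otimes}\}$ is a cocomplete preaisle containing those perfect generators, which settles the step. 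The hypothesis $\mathcal{C}[1]\subseteq\mathcal{C}$ is not needed beyond making the aisles well defined.
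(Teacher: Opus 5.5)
Your proof is correct. It shares the paper's core computation but propagates it from generators to the whole aisle by a more direct route.

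\textbf{Common core.} Both arguments choose $N$ via \Cref{lem:pushforward_sends_shift_of_aisle_to_aisle} and pass to non-tensor generators via \Cref{rmk:HLP_HLLP_proxy_aisle}. Both then use the projection formula to rewrite $\mathbf{R}f_\ast(Q\otimes^{\mathbf{L}}\mathbf{L}f^\ast C[N])$ as $\mathbf{R}f_\ast(Q[N])\otimes^{\mathbf{L}} C$. This object lies in $\overline{\langle \mathcal{C} \rangle}^{(-\infty,0]}_{\otimes}$ because that aisle is closed under tensoring with $D^{\leq 0}_{\operatorname{qc}}(X)$.

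\textbf{The paper's route.} The paper argues dually. It invokes \Cref{lem:aisle_via_double_orthogonal}, hence Neeman's theorem for well generated categories and the hypothesis $\mathcal{C}[1]\subseteq\mathcal{C}$, to write both aisles as ${}^\perp((-)^\perp)$. It then shows that the right adjoint $f^\times$ of $\mathbf{R}f_\ast$ sends the coaisle on $X$ into the coaisle on $Y$, and concludes by adjunction.

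\textbf{Your route.} You note that the preimage $\mathcal{W}$ of a cocomplete preaisle under the exact, coproduct-preserving functor $\mathbf{R}f_\ast$ is again a cocomplete preaisle. Minimality in the definition of $\overline{\langle - \rangle}^{(-\infty,0]}$ then finishes the proof.

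\textbf{Comparison.} The inputs are the same in substance: coproduct-preservation of $\mathbf{R}f_\ast$ on $D_{\operatorname{qc}}$ is exactly what produces $f^\times$ via Brown representability. Your version avoids orthogonals and the double-orthogonal lemma entirely. It also genuinely never uses $\mathcal{C}[1]\subseteq\mathcal{C}$. One small correction to your closing remark: that hypothesis is not needed to make the aisles well defined either. In the paper it serves only to apply \Cref{lem:aisle_via_double_orthogonal}. The paper's formulation does give, as a byproduct, the equivalent statement that $f^\times$ carries the coaisle of $\overline{\langle \mathcal{C} \rangle}^{(-\infty,0]}_{\otimes}$ into that of $\overline{\langle \mathbf{L}f^\ast\mathcal{C}[N] \rangle}^{(-\infty,0]}_{\otimes}$.
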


\begin{proof}
    By \Cref{lem:pushforward_sends_shift_of_aisle_to_aisle}, there exists an $N\geq 0$ such that $\mathbf{R}f_\ast D^{\leq -N}_{\operatorname{qc}}(Y) \subseteq D^{\leq 0}_{\operatorname{qc}}(X)$. 
    Applying \Cref{rmk:HLP_HLLP_proxy_aisle}, we obtain
    \begin{displaymath}
        \overline{\langle \mathbf{L}f^\ast \mathcal{C}[N] \rangle}^{(-\infty,0]}_{\otimes}
        = \overline{\langle (D^{\leq 0}_{\operatorname{qc}}(Y) \cap \operatorname{Perf}(Y)) \otimes^{\mathbf{L}} \mathbf{L}f^\ast \mathcal{C}[N] \rangle}^{(-\infty,0]}.
    \end{displaymath}
    Similar reasoning yields
    \begin{displaymath}
        \overline{\langle \mathcal{C} \rangle}^{(-\infty,0]}_{\otimes}
        = \overline{\langle (D^{\leq 0}_{\operatorname{qc}}(X) \cap \operatorname{Perf}(X)) \otimes^{\mathbf{L}} \mathcal{C} \rangle}^{(-\infty,0]}.
    \end{displaymath}
    
    For the sake of notation, set $\mathcal{P}:= D^{\leq 0}_{\operatorname{qc}}(X) \cap \operatorname{Perf}(X)$ and $\mathcal{Q}:= D^{\leq 0}_{\operatorname{qc}}(Y) \cap \operatorname{Perf}(Y)$. 
    Observe that $\mathcal{Q} \otimes^{\mathbf{L}} \mathbf{L}f^\ast \mathcal{C}[N]$ and $\mathcal{P} \otimes^{\mathbf{L}} \mathcal{C}$ are sets which are closed under nonnegative shifts. 
    By \Cref{lem:aisle_via_double_orthogonal}, it follows that 
    \begin{displaymath}
        \overline{\langle \mathcal{Q} \otimes^{\mathbf{L}} \mathbf{L}f^\ast \mathcal{C}[N] \rangle}^{(-\infty,0]} = {}^\perp ( ( \mathcal{Q} \otimes^{\mathbf{L}} \mathbf{L}f^\ast \mathcal{C}[N] )^\perp )
    \end{displaymath}
    and 
    \begin{displaymath}
        \overline{\langle \mathcal{P} \otimes^{\mathbf{L}} \mathcal{C} \rangle}^{(-\infty,0]} = {}^\perp ( ( \mathcal{P} \otimes^{\mathbf{L}} \mathcal{C} )^\perp ).
    \end{displaymath}
    Hence, the coaisles are respectively given by $( \mathcal{Q} \otimes^{\mathbf{L}} \mathbf{L}f^\ast \mathcal{C}[N] )^\perp$ and $( \mathcal{P} \otimes^{\mathbf{L}} \mathcal{C} )^\perp$. 

    Denote by $f^\times$ the right adjoint of $\mathbf{R}f_\ast$ on $D_{\operatorname{qc}}$ \cite[\href{https://stacks.math.columbia.edu/tag/0E55}{Tag 0E55}]{stacks-project}. 
    We claim that $f^\times ( \mathcal{P} \otimes^{\mathbf{L}} \mathcal{C} )^\perp \subseteq ( \mathcal{Q} \otimes^{\mathbf{L}} \mathbf{L}f^\ast \mathcal{C}[N] )^\perp$. 
    Let $E\in ( \mathcal{P} \otimes^{\mathbf{L}} \mathcal{C} )^\perp$. 
    Choose $A\in \mathcal{Q} \otimes^{\mathbf{L}} \mathbf{L}f^\ast \mathcal{C}[N]$. 
    There exist $Q\in \mathcal{Q}$ and $C\in \mathcal{C}[N]$ such that $A\cong Q \otimes^{\mathbf{L}} \mathbf{L} f^\ast C[N]$. 
    By adjunction and projection formula, 
    \begin{displaymath}
        \begin{aligned}
            \operatorname{Hom}(A,f^\times E)
            &\cong \operatorname{Hom}(Q \otimes^{\mathbf{L}} \mathbf{L} f^\ast C[N], f^\times E)
            \\&\cong \operatorname{Hom}(\mathbf{R}f_\ast (Q \otimes^{\mathbf{L}} \mathbf{L} f^\ast C[N]), E)
            \\&\cong \operatorname{Hom}(\mathbf{R}f_\ast Q[N] \otimes^{\mathbf{L}} C, E).
        \end{aligned}
    \end{displaymath}
    By the choice of $N$,
    \begin{displaymath}
        \mathbf{R}f_\ast Q[N]\in \mathbf{R}f_\ast D^{\leq -N}_{\operatorname{qc}}(Y) \subseteq D^{\leq 0}_{\operatorname{qc}}(X),
    \end{displaymath}
    and so,
    \begin{displaymath}
        \mathbf{R}f_\ast Q[N] \otimes^{\mathbf{L}} C \in \overline{\langle \mathcal{C} \rangle}^{(-\infty,0]}_{\otimes} = {}^\perp ( ( \mathcal{P} \otimes^{\mathbf{L}} \mathcal{C} )^\perp ).
    \end{displaymath}
    Since $E \in ( \mathcal{P} \otimes^{\mathbf{L}} \mathcal{C} )^\perp$, it follows that
    \begin{displaymath}
        \operatorname{Hom}(\mathbf{R}f_\ast Q[N] \otimes^{\mathbf{L}} C, E) \cong 0.
    \end{displaymath}
    Thus, $f^\times E\in  ( \mathcal{Q} \otimes^{\mathbf{L}} \mathbf{L}f^\ast \mathcal{C}[N] )^\perp$.

    Now, we finish the proof. 
    Choose $E\in \overline{\langle \mathcal{Q} \otimes^{\mathbf{L}} \mathbf{L}f^\ast \mathcal{C}[N] \rangle}^{(-\infty,0]}$. 
    Let $G\in ( \mathcal{P} \otimes^{\mathbf{L}} \mathcal{C} )^\perp$. 
    By adjunction,
    \begin{displaymath}
        \operatorname{Hom}(\mathbf{R}f_\ast E , G)
        \cong \operatorname{Hom}(E , f^\times G)
        \cong 0.
    \end{displaymath}
    Indeed, the work above shows $f^\times G \in ( \mathcal{Q} \otimes^{\mathbf{L}} \mathbf{L}f^\ast \mathcal{C}[N] )^\perp$ (i.e.\ the coaisle), and $E$ belongs to the aisle. 
    Therefore, we have shown that 
    \begin{displaymath}
        \mathbf{R}f_\ast E 
        \in {}^\perp ( ( \mathcal{P} \otimes^{\mathbf{L}} \mathcal{C} )^\perp )
        = \overline{\langle \mathcal{P} \otimes^{\mathbf{L}} \mathcal{C} \rangle}^{(-\infty,0]}.
    \end{displaymath}
    This completes the proof.
\end{proof}

\begin{placeholder}
    Let $X$ be a Noetherian algebraic space. 
    We say $X$ satisfies \textbf{B.C.C.} if for every set $\mathcal{S}\subseteq D^b_{\operatorname{coh}}(X)$ there exists $n\geq 0$ such that $D^{\leq -n}_{\operatorname{qc},Z}(X) \subseteq \overline{\langle \mathcal{S} \rangle}^{(-\infty,0]}_{\otimes}$ where $Z:= \cup_{E\in \mathcal{S}} \operatorname{Supp}(E)$.
\end{placeholder}

\begin{proposition}
    \label{prop:etale_nbhd}
    Let $X$ be a Noetherian algebraic space. 
    Consider an elementary distinguished square
    \begin{displaymath}
        \begin{tikzcd}
            {U^\prime} & {V} \\
            {U} & {X}
            \arrow["{j^\prime}", from=1-1, to=1-2]
            \arrow["{f^\prime}"', from=1-1, to=2-1]
            \arrow["f", from=1-2, to=2-2]
            \arrow["j"', from=2-1, to=2-2]
        \end{tikzcd}
    \end{displaymath}
    where $j$ is an open immersion and $f$ an \'{e}tale morphism from an affine scheme. 
    If $U$ satisfies B.C.C., then $X$ satisfies B.C.C.
\end{proposition}

\begin{proof}
    Fix a set $\mathcal{S}\subseteq D^b_{\operatorname{coh}}(X)$. 
    Note that $Z:= \cup_{E\in \mathcal{S}} \operatorname{Supp}(E)$ is specialization closed. 
    Applying \Cref{prop:pushforward_tensor_aisle_twisted_to_aisle}, we can find $N\gg 0$ such that 
    \begin{displaymath}
        \mathbf{R}g_\ast \overline{\langle \mathbf{L}g^\ast \mathcal{S}[N] (-\infty,0] \rangle}^{(-\infty,0]}_{\otimes} 
        \subseteq \overline{\langle \mathcal{S}(-\infty,0] \rangle}^{(-\infty,0]}_{\otimes}
    \end{displaymath}
    for each $g\in \{f,j,f\circ j^\prime \}$. 

    By \cite[\href{https://stacks.math.columbia.edu/tag/071Q}{Tags 071Q}, \href{https://stacks.math.columbia.edu/tag/08H8}{08H8}, \& \href{https://stacks.math.columbia.edu/tag/07TY}{07TY}]{stacks-project}, we can transfer \cite[Lemma 3.3]{Hrbek/Lank/LeGros/Pavon:2026} on the small Zariski site of a scheme to the small \'{e}tale site. 
    Hence, $U^\prime$ and $V$ satisfy B.C.C. 
    As each morphism in the elementary distinguished square is flat, the derived pullback of any object from $\mathcal{S}$ remains a bounded pseudocoherent complex. 
    Recall that the hypothesis says $U$ satisfies B.C.C. 
    Thus, there exist $n_1,n_2,n_3\geq 0$ such that 
    \begin{itemize}
        \item $D^{\leq -n_1}_{\operatorname{qc},Z \cap |\mathcal{U|}}(U) 
        \subseteq \overline{\langle \mathbf{L}j^\ast \mathcal{S}[N] (-\infty,0] \rangle}^{(-\infty,0]}_{\otimes}$
        \item $D^{\leq -n_2}_{\operatorname{qc},f^{-1}}(Z)(V) 
        \subseteq \overline{\langle \mathbf{L}f^\ast \mathcal{S}[N] (-\infty,0]\rangle}^{(-\infty,0]}_{\otimes}$
        \item $D^{\leq -n_3}_{\operatorname{qc},(f\circ j^\prime)^{-1}(Z)}(U^\prime) 
        \subseteq \overline{\langle \mathbf{L}(f\circ j^\prime)^\ast \mathcal{S}[N] (-\infty,0] \rangle}^{(-\infty,0]}_{\otimes}$.
    \end{itemize}
    Set $n=\max\{n_1,n_2,n_3\}$. 

    By \cite[Theorem 1.4]{Hrbek/Lank/Pizzirani:2025}, $D^{\leq -n}_{\operatorname{qc},Z}(X)$ is compactly generated by $P\in \operatorname{Perf}(X)$ which belong to $D^{\leq -n}_{\operatorname{qc}}(X)$ with support in $Z$. 
    Indeed, $D^{\leq -n}_{\operatorname{qc},Z}(X)$ corresponds to the Thomason filtration given by the rule $N \mapsto Z$ for $N \leq -n$ and $N \mapsto \emptyset$ for $-n < N$.
    It suffices to show that every such $P$ belongs to $\overline{\langle \mathcal{S} \rangle}^{(-\infty,0]}_{\otimes}$. 

    Fix $P\in \operatorname{Perf}(X) \cap D^{\leq -n}_{\operatorname{qc},Z}(X)$.  
    Set $W=\operatorname{Supp}(P)$. 
    Note that $W\subseteq Z$. 
    By \cite[\href{https://stacks.math.columbia.edu/tag/08GW}{Tags 08GW}]{stacks-project}, there exists a distinguished triangle 
    \begin{displaymath}
        P \to \mathbf{R}j_\ast \mathbf{L}j^\ast P \oplus \mathbf{R}f_\ast \mathbf{L}f^\ast P \to \mathbf{R}(j \circ f^\prime)_\ast \mathbf{L}(j \circ f^\prime)^\ast P \to P[1].
    \end{displaymath}
    Applying \cite[Lemma A.2]{GuisadoVillaalgordo/Lank:2026}, it follows that 
    \begin{displaymath}
        \mathbf{L}j^\ast P \in D^{\leq -n}_{\operatorname{qc},W \cap |\mathcal{U|}}(U) \subseteq D^{\leq -n_1}_{\operatorname{qc},W \cap |\mathcal{U|}}(U) \subseteq \overline{\langle \mathbf{L}j^\ast \mathcal{S}[N] (-\infty,0] \rangle}^{(-\infty,0]}_{\otimes}
    \end{displaymath}
    and 
    \begin{displaymath}
        \mathbf{L}f^\ast P \in D^{\leq -n}_{\operatorname{qc},f^{-1}(W)}(V) \subseteq D^{\leq -n_2}_{\operatorname{qc},f^{-1}(W)}(V) \subseteq  \overline{\langle \mathbf{L}f^\ast \mathcal{S}[N] (-\infty,0]\rangle}^{(-\infty,0]}_{\otimes},
    \end{displaymath}
    as well as,
    \begin{displaymath}
        \begin{aligned}
            \mathbf{L}(f\circ j^\prime)^\ast P 
            &\in D^{\leq -n}_{\operatorname{qc},(f\circ j^\prime)^{-1}(W)}(U^\prime) 
            \\&\subseteq D^{\leq -n_3}_{\operatorname{qc},(f\circ j^\prime)^{-1}(W)}(U^\prime) \subseteq \overline{\langle \mathbf{L}(f\circ j^\prime)^\ast \mathcal{S}[N] (-\infty,0] \rangle}^{(-\infty,0]}_{\otimes}.
        \end{aligned}
    \end{displaymath}
    It follows, from the distinguished triangle above, that $P[1]$ belongs to $\overline{\langle \mathcal{S}(-\infty,0] \rangle}^{(-\infty,0]}_{\otimes}$. Indeed, $P[1]$ is an extension of objects from $\overline{\langle \mathcal{S}(-\infty,0] \rangle}^{(-\infty,0]}_{\otimes}$, and aisles are closed under extensions.
    Since $n$ is independent of $P$, this completes the proof.
    Indeed, the aisle compactly generated by the $P[1]$ is $D^{\leq -n}_{\operatorname{qc},Z}(X)[1]= D^{\leq -n - 1}_{\operatorname{qc},Z}(X)$. 
\end{proof}

\begin{theorem}
    \label{thm:bounded_below}
    Let $X$ be a Noetherian algebraic space. 
    Then $X$ satisfies B.C.C. 
\end{theorem}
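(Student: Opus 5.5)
We argue by the induction principle for algebraic spaces, \cite[\href{https://stacks.math.columbia.edu/tag/08GL}{Tag 08GL}]{stacks-project}. Since $X$ is Noetherian, hence quasi-compact and quasi-separated, that principle says: if a property $\mathcal{P}$ of quasi-compact quasi-separated algebraic spaces holds for affine schemes and ascends along elementary distinguished squares $(U\subseteq X, f\colon V\to X)$ with $V$ affine and $f$ \'{e}tale, then $\mathcal{P}$ holds for all of them. We take $\mathcal{P}$ to be ``Noetherian and satisfying B.C.C.'' Every space appearing in the induction is open in $X$ or \'{e}tale over it, so it stays Noetherian and $\mathcal{P}$ makes sense throughout.

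\textbf{Affine case.} Let $X=\operatorname{Spec}(R)$ with $R$ Noetherian, and let $\mathcal{S}\subseteq D^b_{\operatorname{coh}}(R)$ be a set. We need some $n\geq 0$ with $D^{\leq -n}_{\operatorname{qc},Z}(R)\subseteq\overline{\langle\mathcal{S}\rangle}^{(-\infty,0]}_{\otimes}$, where $Z=\bigcup_{E\in\mathcal{S}}\operatorname{Supp}(E)$. This is exactly the boundedness statement for schemes in \cite[Lemma 3.3]{Hrbek/Lank/LeGros/Pavon:2026}, which the proof of \Cref{prop:etale_nbhd} already uses to get B.C.C. for the affine schemes $U^\prime$ and $V$. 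So the base case is that lemma, applied to $\operatorname{Spec}(R)$ with the Zariski and \'{e}tale sites identified via \cite[\href{https://stacks.math.columbia.edu/tag/071Q}{Tags 071Q}, \href{https://stacks.math.columbia.edu/tag/08H8}{08H8}, \& \href{https://stacks.math.columbia.edu/tag/07TY}{07TY}]{stacks-project}.

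If one wants to recheck the affine content directly, the argument would go as follows. Each $E\in\mathcal{S}$ is bounded above with coherent cohomology, so it generates, up to a uniform shift bounded by $\sup H^\ast(E)$, the tensor aisle of complexes supported on $\operatorname{Supp}(E)$ and concentrated in sufficiently negative degrees. One then takes a union over $\mathcal{S}$. Uniformity of $n$ is the subtle point, and that is the content of the cited lemma.

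\textbf{Induction step.} This is precisely \Cref{prop:etale_nbhd}. Given an elementary distinguished square in which $U$ satisfies B.C.C. and $V$ is affine, it shows that $X$ satisfies B.C.C. There are two further requirements of the induction principle that must be met. First, in the form of Tag 08GL, the property has to be checked on all members of the relevant class and on the spaces $U$ and $V$ arising from $X$; these are Noetherian because they are open in, or \'{e}tale and of finite type over, a Noetherian space. Second, the base case must also cover spaces that are affine schemes appearing as $V$.

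The main obstacle is matching the exact hypotheses of the induction principle. Tag 08GL is stated for quasi-compact quasi-separated spaces, and the induction runs over opens $W\subseteq X$ in a stratification coming from an \'{e}tale presentation. I would first apply it in its relative form, \cite[\href{https://stacks.math.columbia.edu/tag/08GL}{Tag 08GL}]{stacks-project} for the category of spaces \'{e}tale over $X$. Then I would confirm that \Cref{prop:etale_nbhd} uses only Noetherianity of $X$, $U$, $V$ and $U^\prime$. Once that is confirmed, the theorem follows.
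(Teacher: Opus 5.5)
Structurally your argument is the paper's. It uses an induction principle for algebraic spaces, with \Cref{prop:etale_nbhd} as the inductive step and \cite[Lemma 3.3]{Hrbek/Lank/LeGros/Pavon:2026}, transported to the \'{e}tale site, as the base case. The paper uses the variant \cite[\href{https://stacks.math.columbia.edu/tag/09IT}{Tag 09IT}]{stacks-project}, starting from a dense open subscheme given by Tag 06NH, where you use Tag 08GL with affine objects; that difference is inessential.

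The genuine gap is the base case. It is exactly the uniformity you flagged and then delegated to the citation, and for an arbitrary, possibly infinite, set $\mathcal{S}$ it is false, already for affine $X$. Take $X=\operatorname{Spec}(\mathbb{Z})$, let $p_1,p_2,\dots$ be the primes, and let $\mathcal{S}=\{(\mathbb{Z}/p_i)[i] : i\geq 1\}$, so $Z$ is the set of closed points. Let $\mathcal{U}$ be the class of $M\in D(\mathbb{Z})$ with $\operatorname{Supp}H^j(M)\subseteq\{(p_i): 1\leq i\leq -j\}$ for all $j$. Then $\mathcal{U}$ is closed under coproducts, extensions and $[1]$ (the allowed sets shrink as $j$ grows). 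It is also closed under $\otimes^{\mathbf{L}}$ with $D^{\leq 0}(\mathbb{Z})$, which is automatic over a ring, and it contains $\mathcal{S}$. So $\overline{\langle\mathcal{S}\rangle}^{(-\infty,0]}_{\otimes}\subseteq\mathcal{U}$. But for every $n\geq 0$ the object $(\mathbb{Z}/p_{n+1})[n]$ lies in $D^{\leq -n}_{\operatorname{qc},Z}(X)$ and not in $\mathcal{U}$. Hence B.C.C.\ fails for $\operatorname{Spec}(\mathbb{Z})$, and no lemma can supply your affine case. What governs $n$ is, for each $z\in Z$, the top degree $j$ with $z\in\operatorname{Supp}H^j(E)$ for some $E\in\mathcal{S}$. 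B.C.C.\ needs these degrees to be bounded below independently of $z$, and nothing forces that when $\mathcal{S}$ is infinite.

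This defect is not caused by your deviation from the paper. The paper's proof invokes the same lemma in the same way, for its dense open subscheme and, inside \Cref{prop:etale_nbhd}, for $U^\prime$ and $V$. The statement itself needs a uniformity hypothesis, for instance $\mathcal{S}\subseteq D^{\geq -c}$ for a fixed $c$ (in particular, $\mathcal{S}$ finite). Under that hypothesis your base case is sound. Over a Noetherian ring, the aisle generated by a set of bounded coherent complexes corresponds to the filtration $m\mapsto\bigcup_{E\in\mathcal{S},\,j\geq m}\operatorname{Supp}H^j(E)$ \cite{AlonsoTarrio/JeremiasLopez/Saorin:2010}. This filtration equals $Z$ for $m\leq -c$, so $n=\max(c,0)$ works. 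Your induction then goes through, provided you carry the hypothesis through \Cref{prop:etale_nbhd}. There it is applied to $\mathbf{L}g^\ast\mathcal{S}[N]\subseteq D^{\geq -c-N}$, which generates the same tensor aisle as $\mathbf{L}g^\ast\mathcal{S}[N](-\infty,0]$. Note also that the paper later applies the theorem to arbitrary $\mathcal{S}$, in the proof of \Cref{prop:t_exactness_flat_pullback}, so the restriction is not cosmetic.
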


\begin{proof}
    We apply \cite[\href{https://stacks.math.columbia.edu/tag/09IT}{Tag 09IT}]{stacks-project}. 
    By \cite[\href{https://stacks.math.columbia.edu/tag/06NH}{Tag 06NH}]{stacks-project}, there exists a dense open subspace $U$ of $X$ which is a scheme. 
    Applying \cite[\href{https://stacks.math.columbia.edu/tag/071Q}{Tags 071Q}, \href{https://stacks.math.columbia.edu/tag/08H8}{08H8}, \& \href{https://stacks.math.columbia.edu/tag/07TY}{07TY}]{stacks-project}, we can transfer \cite[Lemma 3.3]{Hrbek/Lank/LeGros/Pavon:2026} on the small Zariski site of $U$ to the small \'{e}tale site. 
    Hence, $U$ satisfies B.C.C. 
    Thus, the remaining condition to check in \cite[\href{https://stacks.math.columbia.edu/tag/09IT}{Tag 09IT}]{stacks-project} follows from \Cref{prop:etale_nbhd}.
\end{proof}

\begin{lemma}
    \label{lem:bounded_below_t_exactness_flat_pullback}
    Let $X$ be a Noetherian algebraic space, $f\colon U \to X$ be a finitely presented flat morphism from an affine scheme, and $\mathcal{S}\subseteq D^b_{\operatorname{coh}}(X)$. 
    For any $E\in D^+_{\operatorname{qc}}(X)\cap (\overline{\langle \mathcal{S} \rangle}^{(-\infty,0]}_{\otimes})^\perp [1]$, we have $\mathbf{L}f^\ast E \in (\overline{\langle \mathbf{L}f^\ast \mathcal{S} \rangle}^{(-\infty,0]})^\perp [1]$.
\end{lemma}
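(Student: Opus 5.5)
Write $\mathcal{A}:=\overline{\langle \mathcal{S} \rangle}^{(-\infty,0]}_{\otimes}$ and $E^\prime := E[-1]\in D^+_{\operatorname{qc}}(X)\cap \mathcal{A}^\perp$. The plan is to convert the orthogonality of $E^\prime$ into a cohomological bound on internal Homs. That bound is preserved by flat pullback, and on the affine $U$ it gives vanishing of global Homs.

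First, the target aisle $\overline{\langle \mathbf{L}f^\ast\mathcal{S}\rangle}^{(-\infty,0]}$ on $D_{\operatorname{qc}}(U)$ is generated, as a cocomplete preaisle, by the objects $\mathbf{L}f^\ast S[k]$ with $S\in\mathcal{S}$ and $k\geq 0$. Its right orthogonal is therefore the right orthogonal of this generating set; this is the "in particular" part of \Cref{lem:aisle_via_double_orthogonal}, or simply the fact that a right orthogonal is closed under the operations generating the preaisle. So it suffices to show
\begin{displaymath}
    \operatorname{Hom}_{D(U)}(\mathbf{L}f^\ast S[k], \mathbf{L}f^\ast E^\prime)=0 \quad \text{for all } S\in\mathcal{S},\ k\geq 0.
\end{displaymath}

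Second, I would exploit the tensor closure of $\mathcal{A}$. For every $P\in D^{\leq 0}_{\operatorname{qc}}(X)$ and $S\in\mathcal{S}$ we have $P\otimes^{\mathbf{L}} S\in\mathcal{A}$. Hence
\begin{displaymath}
    0=\operatorname{Hom}(P\otimes^{\mathbf{L}} S,E^\prime)\cong \operatorname{Hom}(P,R\mathcal{H}om(S,E^\prime)).
\end{displaymath}
Because $S$ is pseudocoherent and $E^\prime$ is bounded below, $R\mathcal{H}om(S,E^\prime)$ lies in $D_{\operatorname{qc}}(X)$ (Stacks, internal Hom on algebraic spaces), so the adjunction can be taken inside $D_{\operatorname{qc}}(X)$. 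Letting $P$ range over $D^{\leq 0}_{\operatorname{qc}}(X)$, whose right orthogonal is $D^{\geq 1}_{\operatorname{qc}}(X)$, we obtain $R\mathcal{H}om(S,E^\prime)\in D^{\geq 1}_{\operatorname{qc}}(X)$.

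Third, I would pull back along $f$. Since $f$ is flat, $S$ is pseudocoherent and $E^\prime\in D^+$, the base change formula for internal Hom gives
\begin{displaymath}
    \mathbf{L}f^\ast R\mathcal{H}om(S,E^\prime)\cong R\mathcal{H}om(\mathbf{L}f^\ast S,\mathbf{L}f^\ast E^\prime).
\end{displaymath}
Flat pullback is $t$-exact for the standard $t$-structure, so this object lies in $D^{\geq 1}_{\operatorname{qc}}(U)$. As $U$ is affine, $R\Gamma(U,-)$ is exact on quasi-coherent modules, and therefore
\begin{displaymath}
    \operatorname{Hom}(\mathbf{L}f^\ast S[k],\mathbf{L}f^\ast E^\prime)=H^{-k}R\Gamma\bigl(U,R\mathcal{H}om(\mathbf{L}f^\ast S,\mathbf{L}f^\ast E^\prime)\bigr)=0
\end{displaymath}
for $k\geq 0$, which finishes the argument.

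The main obstacle I anticipate is the third step, and to a lesser extent the quasi-coherence claim in the second. Both need the precise hypotheses of the base change statement for $R\mathcal{H}om$ in the \'{e}tale setting: pseudocoherence of the source and bounded-below target, which is exactly why $E\in D^+_{\operatorname{qc}}(X)$ is assumed. I would cite the relevant Stacks lemma for algebraic spaces, or reduce to it \'{e}tale locally using the results already invoked in the paper (Tags 071Q, 08H8, 07TY). If a clean reference is unavailable, I would instead verify the isomorphism on an \'{e}tale cover of $U$ by affines, where it is the classical ring statement $\operatorname{RHom}_R(M,N)\otimes_R R^\prime\cong \operatorname{RHom}_{R^\prime}(M\otimes R^\prime,N\otimes R^\prime)$ for $M$ pseudocoherent, $N$ bounded below and $R\to R^\prime$ flat.
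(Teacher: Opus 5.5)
Your proof is correct and is essentially the paper's argument run directly instead of by contradiction. Like the paper, you reduce to the generators $\mathbf{L}f^\ast A[j]$, use tensor--Hom adjunction together with quasi-coherence of $\operatorname{\mathbf{R}\mathcal{H}\!\mathit{om}}(A[j],E[-1])$ to place it in $D^{\geq 1}_{\operatorname{qc}}(X)$, and transport this along $\mathbf{L}f^\ast$ via flat base change for internal Hom and $t$-exactness of flat pullback (your direct version needs only the trivial vanishing $\operatorname{Hom}(D^{\leq 0},D^{\geq 1})=0$ on $U$, where the paper's contrapositive form appeals to $\mathcal{O}_U$ generating $D^{\leq 0}_{\operatorname{qc}}(U)$).
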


\begin{proof}
    Assume the contrary. 
    Hence, there exists $E\in D^+_{\operatorname{qc}}(X)\cap (\overline{\langle \mathcal{S} \rangle}^{(-\infty,0]}_{\otimes})^\perp [1]$ such that 
    \begin{displaymath}
        \mathbf{L}f^\ast E \not\in (\overline{\langle \mathbf{L}f^\ast \mathcal{S} \rangle}^{(-\infty,0]}_{\otimes})^\perp [1].
    \end{displaymath}
    Observe that every aisle on $D_{\operatorname{qc}}(U)$ is tensor compatible because $U$ is affine, which implies $\mathbf{L} f^\ast E [-1]\not\in (\overline{\langle \mathbf{L}f^\ast \mathcal{S} \rangle}^{(-\infty,0]})^\perp$. 
    Moreover, using \cite[Lemma 3.1]{AlonsoTarrio/Lopez/Salorio:2003}, we can find an $A\in \mathcal{S}$ and $j\geq 0$ such that $\operatorname{Hom}(\mathbf{L}f^\ast A[j] , \mathbf{L}f^\ast E[-1])\not=0$. 
    Now, we use adjunction to see that,
    \begin{displaymath}
        \begin{aligned}
            \operatorname{Hom} & (\mathbf{L}f^\ast A[j] , \mathbf{L}f^\ast E[-1]) 
            \\&\cong \operatorname{Hom}(\mathcal{O}_U, \operatorname{\mathbf{R}\mathcal{H}\! \mathit{om}}(\mathbf{L}f^\ast A[j], \mathbf{L}f^\ast E[-1]) ) && (\textrm{\cite[\href{https://stacks.math.columbia.edu/tag/08J7}{Tag 08J7}]{stacks-project}})
            \\&\cong \operatorname{Hom}(\mathcal{O}_U, \mathbf{L}f^\ast \operatorname{\mathbf{R}\mathcal{H}\! \mathit{om}}(A[j], E[-1]) ) 
            && (\textrm{\cite[Lemma 2.11]{GuisadoVillaalgordo/Lank:2026}}).
        \end{aligned}
    \end{displaymath}
    Note that we are using the following identification above:
    \begin{displaymath}
        \operatorname{\mathbf{R}\mathcal{H}\! \mathit{om}}(A[j], E[-1]) \cong \operatorname{\mathbb{R}\mathcal{H}\! \mathit{om}}(A[j], E[-1]).
    \end{displaymath}
    This follows from \cite[Lemma 2.7]{GuisadoVillaalgordo/Lank:2026} and  \cite[\href{https://stacks.math.columbia.edu/tag/0A8A}{Tag 0A8A}]{stacks-project}. 
    Since $\mathcal{O}_U$ compactly generates $D^{\leq 0}_{\operatorname{qc}}(U)$, this implies that
    \begin{displaymath}
        \mathbf{L}f^\ast \operatorname{\mathbf{R}\mathcal{H}\! \mathit{om}}(A[j], E[-1]) \not\in D^{\geq 1}_{\operatorname{qc}}(U).
    \end{displaymath}
   Moreover, as $\mathbf{L}f^\ast$ is $t$-exact with respect to the standard $t$-structures, we obtain
    \begin{displaymath}
        \operatorname{\mathbf{R}\mathcal{H}\! \mathit{om}}(A[j], E[-1]) \not\in D^{\geq 1}_{\operatorname{qc}}(X).
    \end{displaymath}
    Then we can find a $B\in D^{\leq 0}_{\operatorname{qc}}(X)$ such that
    \begin{displaymath}
        \begin{aligned}
            0 &\not= 
            \operatorname{Hom}(B , \operatorname{\mathbf{R}\mathcal{H}\! \mathit{om}}(A[j], E[-1]) )
            \\&\cong \operatorname{Hom}(B \otimes^{\mathbf{L}} A[j], E[-1] ) && (\textrm{\cite[\href{https://stacks.math.columbia.edu/tag/08J7}{Tag 08J7}]{stacks-project}}).
        \end{aligned}
    \end{displaymath}
    However, $E[-1]\in (\overline{\langle \mathcal{S} \rangle}^{(-\infty,0]}_{\otimes})^\perp$ and $B\otimes^{\mathbf{L}} A[j]\in \overline{\langle \mathcal{S} \rangle}^{(-\infty,0]}_{\otimes}$, which is a contradiction.
\end{proof}

\begin{proof}
    [Proof of \Cref{prop:t_exactness_flat_pullback}]
    It is straightforward to see that $\mathbf{L}s^\ast \overline{\langle \mathcal{S} \rangle}^{(-\infty,0]}_{\otimes} \subseteq \overline{\langle \mathbf{L}s^\ast \mathcal{S} \rangle}^{(-\infty,0]}_{\otimes}$, which tells us $\mathbf{L}s^\ast$ is right $t$-exact. 
    Applying \Cref{lem:aisle_via_double_orthogonal} and \Cref{rmk:HLP_HLLP_proxy_aisle}, $\overline{\langle \mathcal{S} \rangle}^{(-\infty,0]}_{\otimes} = {}^\perp (\widetilde{\mathcal{S}}^\perp)$ where $\widetilde{\mathcal{S}}$ is given by
    \begin{displaymath}
       (D^{\leq 0}_{\operatorname{qc}}(X) \cap \operatorname{Perf}(X)) \otimes^{\mathbf{L}} \mathcal{S}.
    \end{displaymath}   
    Set
    \begin{displaymath}
        Z:= \bigcup_{S\in \widetilde{\mathcal{S}}} \operatorname{Supp}(S). 
    \end{displaymath}
    Let $B\in (\widetilde{\mathcal{S}})^\perp$. 
    By \cite[Lemma 6.9]{Lank:2026a}, there exists a distinguished triangle
    \begin{displaymath}
        \Gamma_Z (B) \to B \to L_Z (B) \to \Gamma_Z (B)[1]
    \end{displaymath}
    where $\Gamma_Z (B) \in D_{\operatorname{qc},Z}(X)$ and $L_Z (B) \in D_{\operatorname{qc},Z}(X)^\perp$. 
    Since $\overline{\langle \mathcal{S} \rangle}^{(-\infty,0]}_{\otimes} \subseteq D_{\operatorname{qc},Z}(X)$, it follows that $L_Z (B)\in (\widetilde{\mathcal{S}})^\perp$.
    Hence, from the distinguished triangle 
    \begin{displaymath}
        L_Z (B)[-1] \to \Gamma_Z (B) \to B \to L_Z (B),
    \end{displaymath}
    $\Gamma_Z (B)\in (\widetilde{\mathcal{S}})^\perp$ because coaisles are closed under nonpositive shifts and extensions, whereas $B,L_Z (B)\in (\widetilde{\mathcal{S}})^\perp$.
    By \cite[Lemma 6.9]{Lank:2026a}, there exists a distinguished triangle
    \begin{displaymath}
        \Gamma_{|s|^{-1} (Z)} (\mathbf{L}s^\ast B) \to \mathbf{L}s^\ast B \to L_{|s|^{-1} (Z)} (\mathbf{L}s^\ast B) \to \Gamma_{|s|^{-1} (Z)} (\mathbf{L}s^\ast B)[1]
    \end{displaymath}
    where $\Gamma_{|s|^{-1} (Z)} (\mathbf{L}s^\ast B) \in D_{\operatorname{qc},|s|^{-1}(Z)}(U)$ and $L_{|s|^{-1}(Z)} (\mathbf{L}s^\ast B) \in D_{\operatorname{qc},|s|^{-1}(Z)}(U)^\perp$, and it is isomorphic to the distinguished triangle 
    \begin{displaymath}
        \mathbf{L}s^\ast \Gamma_Z (B) \to \mathbf{L}s^\ast B \to \mathbf{L}s^\ast L_Z (B) \to \mathbf{L}s^\ast \Gamma_Z (B)[1].
    \end{displaymath}
    Thus, we obtain that $\mathbf{L}s^\ast L_Z (B)$ belongs to the coaisle of the tensor $t$-structure on $D_{\operatorname{qc}}(U)$ generated by $\mathbf{L}s^\ast \mathcal{S}$.
    We claim that $\Gamma_Z (B)$ is in $D^+_{\operatorname{qc}}(X)$. 
    By \Cref{thm:bounded_below}, there exists $n\geq 0$ such that $D^{\leq -n}_{\operatorname{qc},Z}(X) \subseteq \overline{\langle \mathcal{S} \rangle}^{(-\infty,0]}_{\otimes}$.
    Hence, $(\widetilde{\mathcal{S}})^\perp\cap D_{\operatorname{qc},Z}(X) \subseteq D^{\geq -n+1}_{\operatorname{qc},Z}(X)$.
    Since $\Gamma_Z (B)\in D_{\operatorname{qc},Z}(X)\cap (\widetilde{\mathcal{S}})^\perp$, it follows that $\Gamma_Z (B) \in D^{\geq -n+1}_{\operatorname{qc},Z}(X) \subseteq D^+_{\operatorname{qc}}(X)$. 
    Then \Cref{lem:bounded_below_t_exactness_flat_pullback} implies $\mathbf{L}s^\ast \Gamma_Z (B)$ belongs to the coaisle of the tensor $t$-structure on $D_{\operatorname{qc}}(U)$ generated by $\mathbf{L}s^\ast \mathcal{S}$.
    Thus, $\mathbf{L}s^\ast B$ is an extension of two objects which belong to the coaisle of the tensor $t$-structure on $D_{\operatorname{qc}}(U)$ generated by $\mathbf{L}s^\ast \mathcal{S}$, so this completes the proof.
\end{proof}

\section{Restrictability}
\label{sec:restrictability}

\begin{lemma}
    \label{lem:lifting_descending_immediate_generalizations}
    Let $X$ be a Noetherian algebraic space admitting a dualizing complex. 
    Choose an \'{e}tale presentation $s\colon U \to X$. 
    If $p \rightsquigarrow p^\prime$ is an immediate specialization in $|X|$, then there exists an immediate generalization $u \rightsquigarrow u^\prime$ in $|U|$ such that $s(u) = p$ and $s(u^\prime) = p^\prime$. 
    Moreover, if $u \rightsquigarrow u^\prime$ is an immediate specialization in $|U|$, then $s(u) \rightsquigarrow s(u^\prime)$ is an immediate specialization in $|X|$. 
\end{lemma}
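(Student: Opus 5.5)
We will prove the two statements by reducing to the local rings of $X$ and $U$ and using codimension, which the dualizing complex controls.

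\textbf{Translation into local algebra.} Choose $u' \in |U|$ with $s(u')=p'$; this is possible since $s$ is surjective. Since $s$ is étale, $\mathcal{O}_{U,u'}$ is an étale local extension of the henselian or strict local data of $X$ at $p'$. More precisely, the strict henselizations agree: $\mathcal{O}^{sh}_{U,\bar u'} \cong \mathcal{O}^{sh}_{X,\bar p'}$. Points of $|X|$ specializing to $p'$ correspond to primes of $\mathcal{O}_{X,\bar p'}$ modulo the action of the residual Galois group. Points of $|U|$ specializing to $u'$ correspond to primes of $\mathcal{O}_{U,u'}$. The map $\operatorname{Spec}\mathcal{O}_{U,u'}\to \operatorname{Spec}\mathcal{O}_{X,\bar p'}$ (the henselian local ring of $X$ at $p'$) is flat with zero-dimensional fibres.

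\textbf{Key step: immediate specializations are exactly codimension one.} The existence of a dualizing complex makes $X$ and $U$ universally catenary. Via the codimension function attached to the dualizing complex (\cite[Tag 0E4X]{stacks-project} together with the étale-local definition), it also provides a dimension function $\delta$ on $|X|$ and $\delta\circ s$ on $|U|$. With such a function, a specialization $p\rightsquigarrow p'$ is immediate if and only if $\delta(p)=\delta(p')+1$. Indeed, $\delta$ drops by exactly one along immediate specializations, and it is strictly decreasing along any nontrivial specialization. Because $s$ is étale, and so preserves local dimension and codimension, $\delta_U:=\delta_X\circ|s|$ is again a dimension function on $|U|$. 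The whole statement then reduces to the following.

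\emph{Descending.} Let $u\rightsquigarrow u'$ be immediate in $|U|$. Then $s(u)\rightsquigarrow s(u')$ by continuity of $|s|$, and $\delta_X(s(u))-\delta_X(s(u'))=\delta_U(u)-\delta_U(u')=1$. So the specialization is immediate, and in particular $s(u)\neq s(u')$.

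\emph{Lifting.} Let $p\rightsquigarrow p'$ be immediate, and fix $u'$ over $p'$. Étale morphisms are flat, and flat morphisms of locally Noetherian algebraic spaces satisfy going-down (\cite[Tag 03HV]{stacks-project}, or the corresponding statement for spaces). Hence there is a generalization $u\rightsquigarrow u'$ with $s(u)=p$. Then $\delta_U(u)-\delta_U(u')=1$, so $u\rightsquigarrow u'$ is immediate.

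\textbf{Main obstacle.} The hard part is making rigorous that $\delta_X$ is a dimension function on $|X|$ in the algebraic space setting, so that immediacy in $|X|$ can be detected numerically. The plan here is to define $\delta$ étale-locally from the normalized dualizing complex on an affine étale cover. On an affine scheme with a dualizing complex, $\delta$ is a dimension function by \cite[Tag 0AWF]{stacks-project}. We then check that this $\delta$ descends to $|X|$, since it is compatible with étale pullback. Finally, we verify that strictly decreasing by one characterizes immediate specializations in $|X|$. For this we use that specializations in $|X|$ lift along $s$, which is again the going-down and lifting of specializations for étale presentations \cite[Tag 0DQN]{stacks-project}. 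That lifting reduces the check to the corresponding property on $U$.
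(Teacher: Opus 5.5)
Your argument is correct in substance, but it is organized differently from the paper's.

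\textbf{How the paper argues.} The paper proves the lifting claim with no dimension theory at all. It lifts $p\rightsquigarrow p'$ along the flat map $s$ to some $u\rightsquigarrow u'$. Any $v$ with $u\rightsquigarrow v\rightsquigarrow u'$ maps to $p$ or $p'$ by immediacy. It then concludes $v\in\{u,u'\}$ because fibres of the \'{e}tale map $s$ contain no nontrivial specializations (Tag 0H1Q). Only the descending claim uses the dualizing complex. There the paper argues by contradiction with Krull dimensions of strictly henselian local rings, resting on $\dim\mathcal{O}_{U,u}+1=\dim\mathcal{O}_{U,u'}$ for immediate $u\rightsquigarrow u'$.

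\textbf{How you argue, and what each route buys.} You let a single dimension function $\delta_X$ with $\delta_X\circ|s|=\delta_U$ settle both claims numerically. The paper's treatment of the first claim is more general, since it needs no dualizing complex. Yours isolates exactly what the second claim needs. In fact the identity the paper uses fails for non-equidimensional local rings. In $\operatorname{Spec} k[x,y,z]/(xz,yz)$, the generic point of the line $x=y=0$ specializes immediately to the origin, yet the local dimensions are $0$ and $2$. The function $\delta$, by contrast, detects immediacy exactly, so your route is the more robust one there.

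\textbf{Points to tighten.}

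First, ``\'{e}tale maps preserve local dimension and codimension'' does not by itself show that $\delta_X\circ|s|$ drops by exactly one along immediate specializations in $|U|$. That statement is essentially the descending claim itself, and the example above shows local dimension is the wrong invariant. The correct justification is the one in your last paragraph:
\begin{itemize}
\item the function $\delta_U$ attached to $s^\ast\omega$ is a dimension function by Tag 0AWF;
\item it is constant on fibres of $s$, because \'{e}tale local homomorphisms carry normalized dualizing complexes to normalized ones (compare the two projections of $U\times_X U$);
\item so $\delta_X$ is \emph{defined} by $\delta_X\circ|s|=\delta_U$.
\end{itemize}

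Second, you propose to verify, rather than cite, that $\delta_X$ is a dimension function on $|X|$; the paper simply cites Tag 0E53. If you verify it, then ``lifting reduces the check to $U$'' requires knowing that the lift of an immediate specialization is immediate. That is your first claim, so your numerical ``Lifting'' step would then be circular. Supply it by the fibre argument above. Constancy of $\delta_U$ on fibres together with strict monotonicity already shows that fibres of $s$ contain no nontrivial specializations.

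Third, the ``translation into local algebra'' paragraph is never used. It also has the map of spectra reversed: the natural map is $\mathcal{O}_{U,u'}\to\mathcal{O}^{sh}_{U,\bar{u}'}\cong\mathcal{O}_{X,\bar{p}'}$, not a map out of the (strictly) henselian ring. Drop it.
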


\begin{proof}
    Since $X$ admits a dualizing complex, $|X|$ admits a dimension function \cite[\href{https://stacks.math.columbia.edu/tag/0E53}{Tag 0E53}]{stacks-project}.
    We prove the first claim. 
    Suppose $p \rightsquigarrow p^\prime$ is an immediate specialization in $|X|$. 
    By \cite[\href{https://stacks.math.columbia.edu/tag/03JX}{Tags 03JX}, \href{https://stacks.math.columbia.edu/tag/03JV}{03JV}, \& \href{https://stacks.math.columbia.edu/tag/03K2}{03K2}]{stacks-project}, there exists a generalization $u \rightsquigarrow u^\prime$ in $|U|$ such that $s(u) = p$ and $s(u^\prime) = p^\prime$. 
    Choose $v\in |U|$ such that $u \rightsquigarrow v \rightsquigarrow u^\prime$.
    Then either $s(v)=p$ or $s(v)=p^\prime$ because $p \rightsquigarrow p^\prime$ is an immediate specialization in $|X|$. 
    Appealing to \cite[\href{https://stacks.math.columbia.edu/tag/0H1Q}{Tag 0H1Q}]{stacks-project}, it follows that either $u=v$ or $u^\prime = v$.
    This proves the first claim.  

    Now we check the second claim. 
    Let $u \rightsquigarrow u^\prime$ be an immediate specialization in $|U|$. 
    Assume that $s(u) \rightsquigarrow s(u^\prime)$ is not an immediate specialization in $|X|$. 
    This implies there exists $q\in |X|$ such that $s(u) \rightsquigarrow q \rightsquigarrow s(u^\prime)$ where $s(u)\not=q$ and $s(u^\prime)\not=q$. 
    If necessary, we can impose that $s(u) \rightsquigarrow q$ be an immediate specialization. 
    As $u \rightsquigarrow u^\prime$ is an immediate specialization, we know that $\dim \mathcal{O}_{U,u} + 1 = \dim \mathcal{O}_{U,u^\prime}$. 
    
    Suppose $\overline{u}\colon \operatorname{Spec}(k)\to U$ and $\overline{u}^\prime \colon \operatorname{Spec}(k^\prime) \to U$ are geometric points respectively for $u$ and $u^\prime$. 
    Applying \cite[\href{https://stacks.math.columbia.edu/tag/06LK}{Tags 06LK} \& \href{https://stacks.math.columbia.edu/tag/04KF}{04KF}]{stacks-project}, it follows that 
    \begin{displaymath}
        \begin{aligned}
            \dim \mathcal{O}_{X,s\circ \overline{u}} + 1
            &= \dim \mathcal{O}_{U,u}^{sh} + 1 
            \\&= \dim \mathcal{O}_{U,u} + 1 
            \\&= \dim \mathcal{O}_{U,u^\prime}
            \\&= \dim \mathcal{O}_{U,u^\prime}^{sh}
            \\&= \dim \mathcal{O}_{X,s\circ \overline{u}^\prime}.
        \end{aligned}
    \end{displaymath}
    By the first claim, there exists an immediate generalization $v \rightsquigarrow v^\prime$ in $|U|$ such that $s(u) = s(v)$ and $s(v^\prime) = q$.
    Choose geometric points $\overline{v}\colon \operatorname{Spec}(k)\to U$ and $\overline{v}^\prime \colon \operatorname{Spec}(k^\prime) \to U$ respectively for $v$ and $v^\prime$. 
    By \cite[Lemma 2.4]{GuisadoVillaalgordo/Lank:2026}, $s \circ \overline{v}$ and $s \circ \overline{u}$ represent the same point in $|X|$. 
    Once again, from \cite[\href{https://stacks.math.columbia.edu/tag/06LK}{Tags 06LK} \& \href{https://stacks.math.columbia.edu/tag/04KF}{04KF}]{stacks-project}, 
    \begin{displaymath}
            \dim \mathcal{O}_{X,s\circ \overline{u}} + 1
            = \dim \mathcal{O}_{X,s\circ \overline{v}} + 1
            = \dim \mathcal{O}_{X,s\circ \overline{v}^\prime}.
    \end{displaymath}
    Since $q\not = s(u^\prime)$, then a similar argument above yields
    \begin{displaymath}
            \dim \mathcal{O}_{X,s\circ \overline{v}^\prime} + 1
            \leq \dim \mathcal{O}_{X,s\circ \overline{u}^\prime}.
    \end{displaymath}
    However, we obtain a contradiction,
    \begin{displaymath}
        \dim \mathcal{O}_{X,s\circ \overline{u}} + 1 < \dim \mathcal{O}_{X,s\circ \overline{u}} + 2 \leq \dim \mathcal{O}_{X,s\circ \overline{u}^\prime}.
    \end{displaymath}
    This completes the proof.
\end{proof}

\begin{lemma}
    \label{lem:wc_across_z_via_etale_presentation}
    Let $X$ be a Noetherian algebraic space admitting a dualizing complex and $\phi$ be a Thomason filtration on $X$. 
    Then $\phi$ satisfies the weak Cousin condition if, and only if, $s^{-1}\phi$ is weak Cousin for every \'{e}tale presentation $s\colon U \to X$.
\end{lemma}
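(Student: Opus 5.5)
The plan is to reduce both implications to \Cref{lem:lifting_descending_immediate_generalizations}. That lemma lets immediate generalizations in $|X|$ lift to immediate generalizations in $|U|$, and lets immediate generalizations in $|U|$ map to immediate generalizations in $|X|$. Throughout, write $\psi := s^{-1}\phi$, so that $\psi(n) = |s|^{-1}(\phi(n))$. We already know $\psi$ is a Thomason filtration on $U$, by the remark in \Cref{sec:prelim_Thomason}.

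For the forward direction, assume $\phi$ is weak Cousin and fix an \'{e}tale presentation $s\colon U\to X$. Let $u \rightsquigarrow u^\prime$ be an immediate generalization in $|U|$ with $u^\prime \in \psi(n)$, that is, $s(u^\prime)\in\phi(n)$. By the second claim of \Cref{lem:lifting_descending_immediate_generalizations}, the generalization $s(u)\rightsquigarrow s(u^\prime)$ is immediate in $|X|$. One should check separately that $s(u)\neq s(u^\prime)$; this holds because the fibres of an étale morphism are discrete, so no nontrivial specialization lies in a single fibre (\cite[\href{https://stacks.math.columbia.edu/tag/0H1Q}{Tag 0H1Q}]{stacks-project}, already used in the proof of that lemma). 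The weak Cousin condition for $\phi$ then gives $s(u)\in\phi(n-1)$. Hence $u\in\psi(n-1)$, so $\psi$ is weak Cousin.

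For the converse, it suffices that $\psi$ is weak Cousin for a single \'{e}tale presentation, which exists by \cite[\href{https://stacks.math.columbia.edu/tag/03H0}{Tag 03H0}]{stacks-project}; it is convenient to take $U$ affine. Let $p\rightsquigarrow p^\prime$ be an immediate generalization in $|X|$ with $p^\prime\in\phi(n)$. By the first claim of \Cref{lem:lifting_descending_immediate_generalizations}, there is an immediate generalization $u\rightsquigarrow u^\prime$ in $|U|$ with $s(u)=p$ and $s(u^\prime)=p^\prime$. Then $u^\prime\in\psi(n)$, so the weak Cousin condition for $\psi$ gives $u\in\psi(n-1)$. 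Therefore $p=s(u)\in\phi(n-1)$.

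The argument is formal once \Cref{lem:lifting_descending_immediate_generalizations} is available, since all the geometric content sits in that lemma. The only step that needs care is the forward direction. There the weak Cousin condition refers to immediate generalizations $p\neq q$, so I must confirm that the image of a nontrivial immediate generalization stays nontrivial. The discreteness of étale fibres handles this. I would also note that the dualizing complex hypothesis enters only through \Cref{lem:lifting_descending_immediate_generalizations}, via the dimension function.
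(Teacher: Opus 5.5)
Your proof is correct and matches the paper's argument: both directions reduce to the two claims of \Cref{lem:lifting_descending_immediate_generalizations}, pushing immediate generalizations forward along $s$ for one implication and lifting them along a single fixed presentation for the other. Your separate check that $s(u)\neq s(u^\prime)$ is harmless but redundant, because the paper's definition of an immediate generalization already requires distinct endpoints, so the lemma's conclusion includes it.
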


\begin{proof}
    First, let $\phi$ satisfy the weak Cousin condition. 
    Consider an \'{e}tale presentation $s\colon U \to X$. 
    Recall that $s^{-1}\phi$ is defined by the assignment $n\mapsto s^{-1}(\phi(n))$. 
    Choose an immediate generalization $u\rightsquigarrow u^\prime$ in $|U|$ where $u^\prime \in s^{-1}(\phi(n))$. 
    By \Cref{lem:lifting_descending_immediate_generalizations}, $s(u)\rightsquigarrow s(u^\prime)$ is an immediate specialization. 
    Moreover, we know that $s(u^\prime) \in \phi(n)$. 
    Since $\phi$ satisfies the weak Cousin condition, it follows that $s(u)\in \phi(n-1)$. 
    Hence, $u\in s^{-1}(\phi(n-1))$, which shows $s^{-1}\phi$ satisfies weak Cousin condition.

    We prove the converse direction. 
    Suppose that $s^{-1}\phi$ satisfies the weak Cousin condition for every \'{e}tale presentation $s\colon U \to X$. 
    Fix an \'{e}tale presentation $s\colon U \to X$. 
    Let $p\rightsquigarrow p^\prime$ be an immediate generalization in $|X|$ where $p^\prime \in \phi(n)$. 
    By \Cref{lem:lifting_descending_immediate_generalizations}, there exists an immediate generalization $u \rightsquigarrow u^\prime$ such that $s(u)=p$, $s(u^\prime)=p^\prime$. 
    It follows that $u^\prime \in s^{-1}(\phi(n))$. 
    Since $s^{-1}\phi$ satisfies the weak Cousin condition, we know that $u\in s^{-1}(\phi(n-1))$. 
    Hence, $p\in \phi(n-1)$, which finishes the proof.
\end{proof}

\begin{remark}
    \label{rmk:smooth_presentation_CM-excellent}
    Let $X$ be a Noetherian algebraic space admitting a dualizing complex. 
    By \cite[\href{https://stacks.math.columbia.edu/tag/0E4X}{Tag 0E4X}]{stacks-project}, for any \'{e}tale presentation $s\colon U \to X$ by a scheme, $U$ admits a dualizing complex. 
    Then, in the sense of \cite{Cesnavicius:2021}, $U$ is CM-excellent. 
    This follows from \cite[Example 4.6]{Clark/Lank/ManaliRahul/Parker:2024}, which builds from the affine setting of \cite[Corollary 1.4]{Kawasaki:2002}.
\end{remark}

\begin{proposition}
    \label{prop:weak_Cousin_across_Z_implies_restriction}
    Let $X$ be a Noetherian algebraic space admitting a dualizing complex and $\phi$ be a Thomason filtration on $X$. 
    If $\phi$ satisfies the weak Cousin condition, then $\mathcal{U}_\phi$ restricts to $D^b_{\operatorname{coh}}(X)$.
\end{proposition}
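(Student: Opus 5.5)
Fix an \'{e}tale presentation $s\colon U \to X$ with $U$ an affine scheme, and let $E \in D^b_{\operatorname{coh}}(X)$. We must show that $\tau^{\leq 0}_\phi E$ and $\tau^{\geq 1}_\phi E$ both lie in $D^b_{\operatorname{coh}}(X)$. The plan is to pull the truncation triangle back to $U$, apply the affine classification there, and then descend.

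First, we transport the situation to $U$. By \Cref{lem:wc_across_z_via_etale_presentation}, $|s|^{-1}\phi$ satisfies the weak Cousin condition on $U$. By \Cref{rmk:smooth_presentation_CM-excellent}, $U$ is CM-excellent (indeed it admits a dualizing complex). The affine classification of \cite{Takahashi:2023} (equivalently \cite[Theorem 6.9]{AlonsoTarrio/JeremiasLopez/Saorin:2010}) then shows that $\mathcal{U}_{|s|^{-1}\phi}$ restricts to $D^b_{\operatorname{coh}}(U)$.

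Next, we identify the pulled-back aisle with the one attached to $|s|^{-1}\phi$. Write $\mathcal{U}_\phi = \overline{\langle \mathcal{P} \rangle}^{(-\infty,0]}_{\otimes}$ with $\mathcal{P} \subseteq \operatorname{Perf}(X)$ consisting of the perfect complexes in $\mathcal{U}_\phi$; these lie in $D^b_{\operatorname{coh}}(X)$ since $X$ is Noetherian. By \Cref{prop:t_exactness_flat_pullback}, $\mathbf{L}s^\ast$ is $t$-exact from $(D_{\operatorname{qc}}(X),\mathcal{U}_\phi)$ to $(D_{\operatorname{qc}}(U), \overline{\langle \mathbf{L}s^\ast\mathcal{P}\rangle}^{(-\infty,0]}_{\otimes})$. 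This target aisle is compactly generated by perfect complexes. By the support formula of \cite[Theorem 1.4]{Hrbek/Lank/Pizzirani:2025} and $\operatorname{Supp}\mathcal{H}^j(\mathbf{L}s^\ast P) = |s|^{-1}\operatorname{Supp}\mathcal{H}^j(P)$ ($s$ flat), its filtration is $|s|^{-1}\phi$. Hence it equals $\mathcal{U}_{|s|^{-1}\phi}$.

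$t$-exactness means that $\mathbf{L}s^\ast$ carries the $\phi$-truncation triangle of $E$ to the $|s|^{-1}\phi$-truncation triangle of $\mathbf{L}s^\ast E$. Since $\mathbf{L}s^\ast E \in D^b_{\operatorname{coh}}(U)$, the first step gives that $\mathbf{L}s^\ast \tau^{\leq 0}_\phi E \cong \tau^{\leq 0}_{|s|^{-1}\phi}\mathbf{L}s^\ast E$ lies in $D^b_{\operatorname{coh}}(U)$, and likewise for $\tau^{\geq 1}$.

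Finally, we descend. Being in $D^b_{\operatorname{coh}}$ is \'{e}tale local: $\mathbf{L}s^\ast$ is $t$-exact for the standard $t$-structures, so $\mathcal{H}^i(\mathbf{L}s^\ast F) = s^\ast\mathcal{H}^i(F)$. As $s$ is surjective and flat, vanishing of $\mathcal{H}^i$ and coherence of quasi-coherent sheaves can both be checked after pullback. Hence $\tau^{\leq 0}_\phi E, \tau^{\geq 1}_\phi E \in D^b_{\operatorname{coh}}(X)$, so $\mathcal{U}_\phi \cap D^b_{\operatorname{coh}}(X)$ is an aisle on $D^b_{\operatorname{coh}}(X)$ with truncations inherited from $D_{\operatorname{qc}}(X)$.

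The main obstacle is the identification $\overline{\langle \mathbf{L}s^\ast\mathcal{P}\rangle}^{(-\infty,0]}_{\otimes} = \mathcal{U}_{|s|^{-1}\phi}$. If one only needs that the pulled-back filtration is determined by supports of cohomology of generators, this is a direct check via the correspondence. If it does not go through directly, I would instead generate $\mathcal{U}_{|s|^{-1}\phi}$ on affine $U$ by Koszul complexes on $|s|^{-1}\phi(n)$, and compare them with the pullbacks of perfect complexes supported on $\phi(n)$ (pullbacks of Koszul-type objects have the same support in each degree). The remaining steps are formal once \Cref{prop:t_exactness_flat_pullback} is in hand.
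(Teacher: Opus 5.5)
Your proof is correct and follows the paper's argument. You pull back the $\mathcal{U}_\phi$-truncation triangle along an affine étale presentation using \Cref{prop:t_exactness_flat_pullback}, obtain bounded coherent truncations on $U$ from \Cref{lem:wc_across_z_via_etale_presentation} together with Takahashi's affine result (via \Cref{rmk:smooth_presentation_CM-excellent}), and descend boundedness and coherence along the faithfully flat $s$. The paper leaves implicit the identification of $\overline{\langle \mathbf{L}s^\ast\mathcal{P}\rangle}^{(-\infty,0]}_{\otimes}$ with $\mathcal{U}_{|s|^{-1}\phi}$; your direct support-formula argument for it already suffices, so the Koszul fallback is unnecessary.
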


\begin{proof}
    Let $s\colon U \to X$ be an \'{e}tale presentation from an affine scheme. 
    Choose $E\in D^b_{\operatorname{coh}}(X)$. 
    Consider the truncation triangle with respect to $\mathcal{U}_\phi$ on $D_{\operatorname{qc}}(X)$,
    \begin{displaymath}
        \tau^{\leq 0} (E) \to E \to \tau^{\geq 1} (E) \to \tau^{\leq 0} (E)[1].
    \end{displaymath}
    By \Cref{prop:t_exactness_flat_pullback}, $\mathbf{L}s^\ast$ yields a $t$-exact functor 
    \begin{displaymath}
        (D_{\operatorname{qc}}(X) , \mathcal{U}_\phi) \to (D_{\operatorname{qc}}(U) , \mathcal{U}_{s^{-1}\phi}).
    \end{displaymath} 
    Hence, the truncation triangle for $\mathbf{L}s^\ast E$ with respect to $\mathcal{U}_{s^{-1}\phi}$ is given by
    \begin{displaymath}
        \mathbf{L}s^\ast \tau^{\leq 0} (E) \to \mathbf{L}s^\ast E \to \mathbf{L}s^\ast \tau^{\geq 1} (E) \to \mathbf{L}s^\ast \tau^{\leq 0} (E)[1].
    \end{displaymath}
    Applying \Cref{lem:wc_across_z_via_etale_presentation}, $s^{-1}\phi$ satisfies the weak Cousin condition.
    Coupling \Cref{rmk:smooth_presentation_CM-excellent} with \cite[Theorem 3.5]{Takahashi:2023}, it follows that $\mathcal{U}_{s^{-1}\phi}$ restricts to $D^b_{\operatorname{coh}}(U)$. 
    In other words, $\mathbf{L}s^\ast \tau^{\leq 0} (E)$ and $\mathbf{L}s^\ast \tau^{\geq 1} (E)$ have bounded cohomology. 
    Then \cite[Lemma 2.10]{GuisadoVillaalgordo/Lank:2026} implies $\tau^{\leq 0} (E)$ and $\tau^{\geq 1} (E)$ have bounded cohomology. 
    Moreover, from flatness of $s$, \cite[\href{https://stacks.math.columbia.edu/tag/07UB}{Tag 07UB}]{stacks-project} implies $\tau^{\leq 0} (E)$ and $\tau^{\geq 1} (E)$ have coherent cohomology. 
    Thus, $\mathcal{U}_\phi$ restricts to $D^b_{\operatorname{coh}}(X)$.
\end{proof}

\begin{proof}
    [Proof of \Cref{prop:etale_verdier_localization}]
    By Zariski's Main Theorem \cite[\href{https://stacks.math.columbia.edu/tag/082K}{Tag 082K}]{stacks-project}, there exists a finite morphism $t\colon X^\prime \to X$ and an open immersion $p\colon U \to X^\prime$ such that $t\circ p = s$. 
    Consider the following diagram 
    \begin{displaymath}
        \begin{tikzcd}
            U &&& \\
            & {U\times_X U} & {X^\prime\times_X U} & U \\
            & {U\times_X X^\prime} & {X^\prime \times_X X^\prime} & {X^\prime} \\
            & U & {X^\prime} & X
            \arrow["{\Delta_s}", from=1-1, to=2-2]
            \arrow["{1_U}", bend right = -18pt, from=1-1, to=2-4]
            \arrow["{1_U}"',  bend right = 18pt, from=1-1, to=4-2]
            \arrow["{s_2}", from=2-2, to=2-3]
            \arrow["{s_1}"', from=2-2, to=3-2]
            \arrow["{q_2}", from=2-3, to=2-4]
            \arrow["{q_1}"', from=2-3, to=3-3]
            \arrow["p"', from=2-4, to=3-4]
            \arrow["{p_2}", from=3-2, to=3-3]
            \arrow["{p_1}"', from=3-2, to=4-2]
            \arrow["{t_2}", from=3-3, to=3-4]
            \arrow["{t_1}"', from=3-3, to=4-3]
            \arrow["t"', from=3-4, to=4-4]
            \arrow["p"', from=4-2, to=4-3]
            \arrow["t"', from=4-3, to=4-4]
        \end{tikzcd}
    \end{displaymath}
    obtained by base change. 
    Since $s$ is unramified, $\Delta_s$ is an open immersion \cite[\href{https://stacks.math.columbia.edu/tag/05W1}{Tag 05W1}]{stacks-project}.
    Moreover, $p$ being an open immersion implies $s_1$ is an open immersion by base change. 
    Hence, $s_1 \circ \Delta_s$ is an open immersion. 
    Since $s$ is separated, it follows that $p$ is separated \cite[\href{https://stacks.math.columbia.edu/tag/03KR}{Tag 03KR}]{stacks-project}.
    Since $s_1 \circ \Delta_s$ is the graph of $p$, $s_1 \circ \Delta_s$ is a closed immersion \cite[\href{https://stacks.math.columbia.edu/tag/03KO}{Tag 03KO}]{stacks-project}.
    Set $r\colon V \to U \times_X X^\prime $ the associated open and closed immersion of $|\times_X X^\prime |\setminus|U|$. 
    It follows that $U \sqcup V = U\times_X X^\prime$. 
    Indeed, take the natural morphism $U \sqcup V \to U\times_X X^\prime$ induced by $s_1 \circ \Delta_s$ and $r$.
    Since $U$ and $V$ form a Zariski covering of $U\times_X X^\prime$ \cite[\href{https://stacks.math.columbia.edu/tag/02YY}{Tag 02YY}]{stacks-project}, it follows that $U \sqcup V \to U\times_X X^\prime$ is a bijective open and closed immersion \cite[\href{https://stacks.math.columbia.edu/tag/03M4}{Tag 03M4}]{stacks-project}. 
    Hence, the pair $s_1 \circ \Delta_s \colon U \to U\times_X X^\prime$ and $r\colon V \to U\times_X X^\prime$ form an elementary distinguished square. 
    By \cite[\href{https://stacks.math.columbia.edu/tag/08GG}{Tag 08GG}]{stacks-project}, there exists adjoint equivalences 
    \begin{displaymath}
        \mathbf{L}r^\ast \colon D_{\operatorname{qc},V}(U\times_X X^\prime) \leftrightarrows D_{\operatorname{qc}}(V) \colon \mathbf{R}r_\ast
    \end{displaymath}
    and
    \begin{displaymath}
        \mathbf{L}(s_1 \circ \Delta_s)^\ast \colon D_{\operatorname{qc},U}(U\times_X X^\prime) \leftrightarrows D_{\operatorname{qc}}(U) \colon \mathbf{R}(s_1 \circ \Delta_s)_\ast.
    \end{displaymath}
    Denote by $\eta$ the unit of the adjunction $\mathbf{L}(s_1 \circ \Delta_s)^\ast \dashv \mathbf{R}(s_1 \circ \Delta_s)_\ast$. 
    By \cite[Lemma A.4]{GuisadoVillaalgordo/Lank:2026}, there exists a recollement 
    \begin{displaymath}
        \begin{tikzcd}
            {D_{\operatorname{qc}}(U)} && {D_{\operatorname{qc}}(U\times_X X^\prime)} && {D_{\operatorname{qc},V}(U\times_X X^\prime)}
            \arrow["{\mathbf{R}(s_1 \circ \Delta_s)_\ast}", from=1-1, to=1-3]
            \arrow["{(s_1 \circ \Delta_s)^\times}", shift left=3, bend right = -24 pt, from=1-3, to=1-1]
            \arrow["{\mathbf{L}(s_1 \circ \Delta_s)^\ast}"', shift right=3, bend right = 24 pt, from=1-3, to=1-1]
            \arrow["{i^!_U}", from=1-3, to=1-5]
            \arrow["{i^\ast_U}", shift left=3, bend right = -24 pt, from=1-5, to=1-3]
            \arrow["{i_\ast^U}"', shift right=3, bend right = 24 pt, from=1-5, to=1-3]
        \end{tikzcd}
    \end{displaymath}
    where $i_\ast^U$ is the inclusion.
    As $s_1 \circ \Delta_s$ is an open immersion, the distinguished triangle 
    \begin{equation}
        \label{eq:unit_restriction_implies_weak_Cousin_across_Z}
        \operatorname{cone}(\eta_{\mathcal{O}_{U\times_X X^\prime}})[-1] 
        \to \mathcal{O}_{U\times_X X^\prime} 
        \to \mathbf{R}(s_1 \circ \Delta_s)_\ast \mathcal{O}_U 
        \to \operatorname{cone}(\eta_{\mathcal{O}_{U\times_X X^\prime}}).
    \end{equation}
    corresponds to the distinguished triangle associated to the recollement
    \begin{displaymath}
        (i^U_\ast \circ i^!_U)(\mathcal{O}_{U\times_X X^\prime})
        \to \mathcal{O}_{U\times_X X^\prime} 
        \to \mathbf{R}(s_1 \circ \Delta_s)_\ast \mathcal{O}_U
        \to (i^U_\ast \circ i^!_U)(\mathcal{O}_{U\times_X X^\prime})[1].
    \end{displaymath}
    Hence, there exists an isomorphism
    \begin{displaymath}
        \operatorname{cone}(\eta_{\mathcal{O}_{U\times_X X^\prime}}) [-1] \cong (i^U_\ast \circ i^!_U)(\mathcal{O}_{U\times_X X^\prime}).
    \end{displaymath}
    Once more, from \cite[Lemma A.4]{GuisadoVillaalgordo/Lank:2026}, there exists a recollement 
    \begin{displaymath}
        \begin{tikzcd}
            {D_{\operatorname{qc}}(V)} && {D_{\operatorname{qc}}(U\times_X X^\prime)} && {D_{\operatorname{qc},U}(U\times_X X^\prime)}
            \arrow["{\mathbf{R}r_\ast}", from=1-1, to=1-3]
            \arrow["{r^\times}", shift left=3, bend right = -24 pt, from=1-3, to=1-1]
            \arrow["{\mathbf{L}r^\ast}"', shift right=3, bend right = 24 pt, from=1-3, to=1-1]
            \arrow["{i^!_V}", from=1-3, to=1-5]
            \arrow["{i^\ast_V}", shift left=3, bend right = -24 pt, from=1-5, to=1-3]
            \arrow["{i_\ast^V}"', shift right=3, bend right = 24 pt, from=1-5, to=1-3]
        \end{tikzcd}
    \end{displaymath}
    where $i^\ast_V$ is the natural inclusion. 
    This ensures that 
    \begin{displaymath}
        D_{\operatorname{qc},U}(U\times_X X^\prime) \subseteq {}^\perp (\mathbf{R}r_\ast D_{\operatorname{qc}}(V)).
    \end{displaymath}
    Note the adjoint equivalences implies
    \begin{displaymath}
        \mathbf{R}(s_1 \circ \Delta_s)_\ast \mathcal{O}_U \in D_{\operatorname{qc},U}(U\times_X X^\prime)
    \end{displaymath}
    and
    \begin{displaymath}
        \operatorname{cone}(\eta_{\mathcal{O}_{U\times_X X^\prime}}) \in D_{\operatorname{qc},V}(U\times_X X^\prime) = \mathbf{R}r_\ast D_{\operatorname{qc}}(V).
    \end{displaymath}
    In other words, we obtain that
    \begin{displaymath}
        \begin{aligned}
            \operatorname{Hom} 
            & (\mathbf{R}(s_1 \circ \Delta_s)_\ast \mathcal{O}_U, \operatorname{cone}(\eta_{\mathcal{O}_{U\times_X X^\prime}}))
            \\&= \operatorname{Hom}(i^V_\ast \mathbf{R}(s_1 \circ \Delta_s)_\ast \mathcal{O}_U, \mathbf{R}r_\ast \mathbf{L}r^\ast \operatorname{cone}(\eta_{\mathcal{O}_{U\times_X X^\prime}}))
            \\&= \operatorname{Hom}(\mathbf{R}(s_1 \circ \Delta_s)_\ast \mathcal{O}_U, i^!_V \mathbf{R}r_\ast \mathbf{L}r^\ast \operatorname{cone}(\eta_{\mathcal{O}_{U\times_X X^\prime}}))
            \\&\cong 0.
        \end{aligned}
    \end{displaymath}
    Consequently, \eqref{eq:unit_restriction_implies_weak_Cousin_across_Z} splits, and so 
    \begin{equation}
        \label{eq:decompose_graph}
        \mathcal{O}_{U\times_X X^\prime} 
        \cong \mathbf{R}(s_1 \circ \Delta_s)_\ast \mathcal{O}_U \oplus \operatorname{cone}(\eta_{\mathcal{O}_{U\times_X X^\prime}})[-1].
    \end{equation}
    
    Let $E\in D^b_{\operatorname{coh}}(U)$. 
    As $j$ is an open immersion, \Cref{lem:verdier} says there exists $E^\prime \in D^b_{\operatorname{coh}}(X^\prime)$ such that $E$ is a direct summand of $\mathbf{L}p^\ast E^\prime$. 
    Since $t$ is finite, $\mathbf{R}t_\ast E^\prime$ is in $D^b_{\operatorname{coh}}(X)$. 
    Since $s$ is flat, $\mathbf{L}s^\ast \mathbf{R}t_\ast E^\prime \in D^b_{\operatorname{coh}}(U)$. 
    Observe that 
    \begin{displaymath}
        \begin{aligned}
            \mathbf{L}s^\ast \mathbf{R}t_\ast E^\prime
            &\cong \mathbf{L}(t\circ p)^\ast \mathbf{R}t_\ast E^\prime
            \\&\cong \mathbf{R}(p_1)_\ast \mathbf{L}(t_2 \circ p_2)^\ast E^\prime && \textrm{(flat base change)}
            \\&\cong \mathbf{R}(p_1)_\ast (\mathcal{O}_{U\times_X X^\prime} \otimes^{\mathbf{L}} \mathbf{L}(t_2 \circ p_2)^\ast E^\prime).
            \end{aligned}
    \end{displaymath}
    Applying \eqref{eq:decompose_graph}, and using the derived tensor product commutes with direct sums, this complex is isomorphic to 
    \begin{displaymath}
        \begin{aligned}
            & \mathbf{R}(p_1)_\ast \big( (\mathbf{R}(s_1 \circ \Delta_s)_\ast \mathcal{O}_U \oplus \operatorname{cone}(\eta_{\mathcal{O}_{U\times_X X^\prime}})[-1]) \otimes^{\mathbf{L}} \mathbf{L}(t_2 \circ p_2)^\ast E^\prime \big)
            \\&\cong \mathbf{R}(p_1)_\ast \big( (\mathbf{R}(s_1 \circ \Delta_s)_\ast \mathcal{O}_U \otimes^{\mathbf{L}} \mathbf{L}(t_2 \circ p_2)^\ast E^\prime  ) \oplus (\operatorname{cone}(\eta_{\mathcal{O}_{U\times_X X^\prime}})[-1] \otimes^{\mathbf{L}} \mathbf{L}(t_2 \circ p_2)^\ast E^\prime ) \big)
            \\&\cong \mathbf{R}(p_1)_\ast(\mathbf{R}(s_1 \circ \Delta_s)_\ast \mathcal{O}_U \otimes^{\mathbf{L}} \mathbf{L}(t_2 \circ p_2)^\ast E^\prime  ) \oplus \mathbf{R}(p_1)_\ast (\operatorname{cone}(\eta_{\mathcal{O}_{U\times_X X^\prime}})[-1] \otimes^{\mathbf{L}} \mathbf{L}(t_2 \circ p_2)^\ast E^\prime )
        \end{aligned}
    \end{displaymath}
    We claim that $E$ is a direct summand of 
    \begin{displaymath}
        \mathbf{R}(p_1)_\ast (\mathbf{R}(s_1 \circ \Delta_s)_\ast \mathcal{O}_U \otimes^{\mathbf{L}} \mathbf{L}(t_2 \circ p_2)^\ast E^\prime  ).
    \end{displaymath}
    This follows from the computation:
    \begin{displaymath}
        \begin{aligned}
            \\&\cong \mathbf{R}(p_1)_\ast (\mathbf{R}(s_1 \circ \Delta_s)_\ast\mathbf{L}(t_2 \circ p_2 \circ s_1 \circ \Delta_s)^\ast E^\prime  ) && \textrm{(projection formula)}
            \\&\cong \mathbf{R}(p_1)_\ast (\mathbf{R}(s_1 \circ \Delta_s)_\ast\mathbf{L}(p \circ q_2 \circ s_2 \circ \Delta_s)^\ast E^\prime  ) 
            \\&\cong \mathbf{R}(p_1)_\ast (\mathbf{R}(s_1 \circ \Delta_s)_\ast\mathbf{L}p^\ast E^\prime  ) 
            \\&\cong  \mathbf{R}(p_1\circ s_1 \circ \Delta_s)_\ast\mathbf{L}p^\ast E^\prime  
            \\&\cong \mathbf{L}p^\ast E^\prime 
        \end{aligned}
    \end{displaymath}
    This completes the proof.
\end{proof}

\begin{proposition}
    \label{prop:restriction_implies_weak_Cousin_across_Z}
    Let $X$ be a Noetherian algebraic space admitting a dualizing complex and $\phi$ be a Thomason filtration on $X$. 
    If $\mathcal{U}_\phi$ restricts to $D^b_{\operatorname{coh}}(X)$, then $\phi$ satisfies the weak Cousin condition.
\end{proposition}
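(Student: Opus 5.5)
The strategy is to show that restrictability ascends from $X$ to an affine \'{e}tale presentation. Once it does, the affine classification applies on $U$, and \Cref{lem:wc_across_z_via_etale_presentation} descends the weak Cousin condition back to $|X|$.

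Fix an \'{e}tale presentation $s\colon U\to X$ with $U$ affine; it is separated because $U$ is affine. By \Cref{lem:wc_across_z_via_etale_presentation} it suffices to show that $s^{-1}\phi$ is weak Cousin for every such $s$. By \Cref{rmk:smooth_presentation_CM-excellent}, $U$ is CM-excellent, so \cite[Theorem 3.5]{Takahashi:2023} reduces this to showing that $\mathcal{U}_{s^{-1}\phi}$ restricts to $D^b_{\operatorname{coh}}(U)$.

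By \Cref{prop:t_exactness_flat_pullback}, $\mathbf{L}s^\ast$ is $t$-exact from $(D_{\operatorname{qc}}(X),\mathcal{U}_\phi)$ to $(D_{\operatorname{qc}}(U),\mathcal{U}_{s^{-1}\phi})$. To apply it, write $\mathcal{U}_\phi=\overline{\langle \mathcal{P}\rangle}^{(-\infty,0]}_{\otimes}$ with $\mathcal{P}\subseteq\operatorname{Perf}(X)\subseteq D^b_{\operatorname{coh}}(X)$. Then $\overline{\langle \mathbf{L}s^\ast\mathcal{P}\rangle}^{(-\infty,0]}_{\otimes}$ is compactly generated, and its filtration is $s^{-1}\phi$ because supports of cohomology pull back along flat $s$. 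So it equals $\mathcal{U}_{s^{-1}\phi}$ by \cite[Theorem 1.4]{Hrbek/Lank/Pizzirani:2025}.

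Hence for $E\in D^b_{\operatorname{coh}}(X)$, the triangle $\mathbf{L}s^\ast\tau^{\leq0}_\phi E\to \mathbf{L}s^\ast E\to \mathbf{L}s^\ast\tau^{\geq1}_\phi E\to$ is the $s^{-1}\phi$-truncation triangle of $\mathbf{L}s^\ast E$. By hypothesis $\tau^{\leq 0}_\phi E,\tau^{\geq1}_\phi E\in D^b_{\operatorname{coh}}(X)$, and flat pullback preserves $D^b_{\operatorname{coh}}$. Therefore truncations of objects in the essential image of $\mathbf{L}s^\ast$ lie in $D^b_{\operatorname{coh}}(U)$.

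The main obstacle is extending this to arbitrary $F\in D^b_{\operatorname{coh}}(U)$, since $F$ need not be a pullback. Here I would invoke \Cref{prop:etale_verdier_localization}: choose $E\in D^b_{\operatorname{coh}}(X)$ with $F\oplus F'\cong \mathbf{L}s^\ast E$. Truncation functors are additive, so $\tau^{\leq0}_{s^{-1}\phi}F$ is a direct summand of $\tau^{\leq0}_{s^{-1}\phi}\mathbf{L}s^\ast E\in D^b_{\operatorname{coh}}(U)$, and similarly for $\tau^{\geq1}$. Since $D^b_{\operatorname{coh}}(U)$ is closed under direct summands (cohomology sheaves of a summand are summands of coherent sheaves, and boundedness passes to summands), both truncations of $F$ lie in $D^b_{\operatorname{coh}}(U)$.

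Thus $\mathcal{U}_{s^{-1}\phi}$ restricts to $D^b_{\operatorname{coh}}(U)$, so $s^{-1}\phi$ is weak Cousin by Takahashi. Since $s$ was arbitrary, \Cref{lem:wc_across_z_via_etale_presentation} gives that $\phi$ satisfies the weak Cousin condition.
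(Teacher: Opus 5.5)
Your proposal is correct and follows essentially the same route as the paper. You lift restrictability to an affine \'{e}tale presentation using the $t$-exactness of $\mathbf{L}s^\ast$ (\Cref{prop:t_exactness_flat_pullback}) and direct-summand density (\Cref{prop:etale_verdier_localization}), apply Takahashi on $U$, and descend via \Cref{lem:wc_across_z_via_etale_presentation}; your check that $\overline{\langle \mathbf{L}s^\ast\mathcal{P}\rangle}^{(-\infty,0]}_{\otimes} = \mathcal{U}_{s^{-1}\phi}$ fills in a step the paper leaves implicit.
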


\begin{proof}
    Choose an \'{e}tale presentation $s\colon U \to X$ by an affine scheme. 
    Since $U$ is affine, \cite[Lemma 2.13]{GuisadoVillaalgordo/Lank:2026} implies $s$ is separated. 
    Let $E\in D^b_{\operatorname{coh}}(U)$. 
    By \Cref{prop:etale_verdier_localization}, there exist $E^\prime \in D^b_{\operatorname{coh}}(X)$ and $B\in D^b_{\operatorname{coh}}(U)$ such that $\mathbf{L}s^\ast E^\prime \cong E \oplus B$. 
    Consider the truncation triangle of $E^\prime$ with respect to $\phi$,
    \begin{displaymath}
        \tau^{\leq 0}_\phi E^\prime \to E^\prime \to \tau^{\geq 1}_\phi E^\prime \to (\tau^{\leq 0}_\phi E^\prime)[1].
    \end{displaymath}
    By \Cref{prop:t_exactness_flat_pullback}, 
    \begin{displaymath}
        \mathbf{L}s^\ast \tau^{\leq 0}_\phi E^\prime \to \mathbf{L}s^\ast E^\prime \to \mathbf{L}s^\ast \tau^{\geq 1}_\phi E^\prime \to (\mathbf{L}s^\ast \tau^{\leq 0}_\phi E^\prime)[1].
    \end{displaymath}
    is the truncation triangle of $\mathbf{L}s^\ast E^\prime$ with respect to $\mathcal{U}_{s^{-1}\phi}$.
    The hypothesis asserts that $\tau^{\leq 0}_\phi E^\prime , \tau^{\geq 1}_\phi E^\prime \in D^b_{\operatorname{coh}}(X)$. 
    Since $s$ is flat, it follows that $\mathbf{L}s^\ast \tau^{\leq 0}_\phi E^\prime , \mathbf{L}s^\ast \tau^{\geq 1}_\phi E^\prime \in D^b_{\operatorname{coh}}(U)$.
    Since $\mathbf{L}s^\ast E^\prime \cong E \oplus B$, it follows that 
    \begin{displaymath}
        \tau_{s^{-1}\phi}^{\leq 0} (\mathbf{L}s^\ast E^\prime)\cong \tau_{s^{-1}\phi}^{\leq 0} E \oplus \tau^{\leq 0}_{s^{-1}\phi} B \in D^b_{\operatorname{coh}}(U)
    \end{displaymath}
    and 
    \begin{displaymath}
        \tau_{s^{-1}\phi}^{\geq 1} (\mathbf{L}s^\ast E^\prime)\cong \tau_{s^{-1}\phi}^{\geq 1} E \oplus \tau^{\geq 1}_{s^{-1}\phi} B\in D^b_{\operatorname{coh}}(U).
    \end{displaymath}
    Thus, $\mathcal{U}_{s^{-1}\phi}$ restricts to an aisle on $D^b_{\operatorname{coh}}(U)$.
    By \cite[Theorem 3.5]{Takahashi:2023}, $s^{-1}\phi$ satisfies the weak Cousin condition. 
    By \Cref{lem:wc_across_z_via_etale_presentation}, $\phi$ satisfies the weak Cousin condition. 
\end{proof}

\begin{proof}
    [Proof of \Cref{thm:restrict_iff_weak_cousin}]
    This follows from \Cref{prop:restriction_implies_weak_Cousin_across_Z,prop:weak_Cousin_across_Z_implies_restriction}.
\end{proof}

\section{Classification}
\label{sec:classification}

\begin{lemma}
    \label{lem:aisles_based_off_support}
    Let $X$ be a Noetherian algebraic space. 
    Choose $\mathcal{S}_1,\mathcal{S}_2\subseteq D^b_{\operatorname{coh}}(X)$. 
    Then $\overline{\langle \mathcal{S}_1 \rangle}^{(-\infty,0]}_{\otimes}\subseteq \overline{\langle \mathcal{S}_2 \rangle}^{(-\infty,0]}_{\otimes}$ if one has for every $k\in\mathbb{Z}$,
    \begin{displaymath}
        \bigcup_{E\in \mathcal{S}_1} \operatorname{Supp}(\mathcal{H}^k (E))\subseteq \bigcup_{\substack{E^\prime \in \mathcal{S}_2 \\ i\geq k}} \operatorname{Supp}(\mathcal{H}^i (E^\prime)).
    \end{displaymath} 
\end{lemma}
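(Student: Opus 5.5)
It suffices to show that each $E\in\mathcal{S}_1$ lies in $\mathcal{A}_2:=\overline{\langle \mathcal{S}_2 \rangle}^{(-\infty,0]}_{\otimes}$, since $\mathcal{A}_2$ is a cocomplete $\otimes$-preaisle and so contains $\overline{\langle \mathcal{S}_1 \rangle}^{(-\infty,0]}_{\otimes}$ once it contains $\mathcal{S}_1$.

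\textbf{Step 1: identify $\mathcal{A}_2$ with a filtration aisle.} By \Cref{prop:aisles_are_compactly_generated}, $\mathcal{A}_2$ is a compactly generated tensor aisle on $D_{\operatorname{qc}}(X)$. By \cite[Theorem 1.4]{Hrbek/Lank/Pizzirani:2025} it equals $\mathcal{U}_{\phi_2}$ for a Thomason filtration $\phi_2$. I expect, and would verify, that
\begin{displaymath}
    \phi_2(k)=\bigcup_{E'\in\mathcal{S}_2,\, i\geq k}\operatorname{Supp}(\mathcal{H}^i(E')).
\end{displaymath}

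\textbf{Step 2: the inclusion $\supseteq$.} Since $\mathcal{U}_{\phi_2}$ is a tensor aisle, it is closed under $\otimes$ with $D^{\leq 0}_{\operatorname{qc}}(X)$ and under $\Gamma$-type operations. The description of $\mathcal{U}_\phi$ in loc.\ cit. is: $F\in\mathcal{U}_\phi$ if and only if $\operatorname{Supp}\mathcal{H}^k(F)\subseteq\phi(k)$ for all $k$, at least for bounded-above $F$ of the relevant kind. Given $E'\in\mathcal{S}_2\subseteq\mathcal{U}_{\phi_2}$, this gives $\operatorname{Supp}\mathcal{H}^i(E')\subseteq\phi_2(i)\subseteq\phi_2(k)$ for $i\geq k$, since the filtration is decreasing.

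\textbf{Step 3: the inclusion $\subseteq$.} Consider a perfect complex $P$ in the generating set, with $\operatorname{Supp}\mathcal{H}^j(P)\subseteq\phi_2(j)$. I would show $P$ is generated by the $E'$ in degree $\geq j$. Concretely, take a perfect Koszul-type object supported on $\overline{\{x\}}$ for a point $x\in\operatorname{Supp}\mathcal{H}^i(E')$, $i\geq k$, and check it lies in $\overline{\langle E'[i-k]\rangle}_\otimes$. This is the local statement that the tensor aisle generated by a bounded coherent complex contains $D^{\leq -i}_{\operatorname{qc},\operatorname{Supp}\mathcal{H}^i}$ contributions. The step can be reduced étale-locally to the affine case using \Cref{prop:t_exactness_flat_pullback} and the affine computation of \cite[Theorem 3.10]{AlonsoTarrio/JeremiasLopez/Saorin:2010}.

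\textbf{Step 4: conclude.} With the formula for $\phi_2$ in hand, take $E\in\mathcal{S}_1$. By hypothesis $\operatorname{Supp}\mathcal{H}^k(E)\subseteq\phi_2(k)$ for all $k$. Since $E$ is bounded with coherent cohomology, the cohomology-wise criterion for membership in $\mathcal{U}_{\phi_2}$ gives $E\in\mathcal{U}_{\phi_2}=\mathcal{A}_2$.

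For a more hands-on version of Step 4, induct on the amplitude of $E$ using the triangles $\tau^{\leq k-1}E\to\tau^{\leq k}E\to\mathcal{H}^k(E)[-k]$. Extension-closure then reduces the claim to showing $\mathcal{H}^k(E)[-k]\in\mathcal{A}_2$. For this it is enough that a coherent sheaf $M$ supported in $\phi_2(k)$ satisfies $M[-k]\in\mathcal{U}_{\phi_2}$. The reason is that $D^{\leq k}_{\operatorname{qc},\phi_2(k)}(X)$ is compactly generated by perfect complexes in degrees $\leq k$ with support in $\phi_2(k)$, by the Thomason filtration that is constant at $\phi_2(k)$ below $k$. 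Each such perfect complex lies in $\mathcal{U}_{\phi_2}$ because its cohomology in degree $j\leq k$ is supported in $\phi_2(k)\subseteq\phi_2(j)$.

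\textbf{Main obstacle.} The hard part is Step 3, establishing the exact formula for $\phi_2$ from generators that are coherent rather than perfect. The assignment recalled in \Cref{sec:prelim_Thomason} is only stated for perfect generators $\mathcal{P}$. I would first try to bypass this: use \Cref{rmk:HLP_HLLP_proxy_aisle} and \Cref{prop:aisles_are_compactly_generated} to produce an explicit perfect generating set, then check supports by pulling back along an affine étale presentation and applying the affine result there.
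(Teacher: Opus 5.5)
There is a genuine gap. Your Step 1 invokes \Cref{prop:aisles_are_compactly_generated}, but in the paper that proposition is proved \emph{using} this lemma. It applies this lemma three times: to get $\overline{\langle A \rangle}^{(-\infty,0]}_{\otimes}=\overline{\langle E \rangle}^{(-\infty,0]}_{\otimes}$ in both directions, and to replace $A$ by $\oplus_s P_s$. It also relies on \Cref{lem:aisles_based_off_support_pseudo_coherent}, which is proved the same way. So at this stage you do not know that $\mathcal{A}_2$ is compactly generated. Consequently no Thomason filtration $\phi_2$ is attached to it via \cite[Theorem 1.4]{Hrbek/Lank/Pizzirani:2025}, and Steps 2 and 4 have nothing to act on. Your fallback in the ``main obstacle'' paragraph has the same circularity. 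Moreover, the one substantive claim is left as a sketch. That claim (your Step 3) is that objects with supports bounded by $\psi(k)=\bigcup_{E'\in\mathcal{S}_2,\, i\geq k}\operatorname{Supp}(\mathcal{H}^i(E'))$ lie in $\mathcal{A}_2$, and it is essentially the whole content of the lemma.

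The missing idea is that the \'{e}tale reduction you mention in Step 3 proves the lemma directly. It needs neither compact generation nor a formula for $\phi_2$. Here is the argument.
\begin{enumerate}
\item Take $A\in\overline{\langle \mathcal{S}_1 \rangle}^{(-\infty,0]}_{\otimes}$ and its truncation triangle with respect to $\mathcal{A}_2$.
\item Pull back along an \'{e}tale presentation $s\colon U\to X$ from an affine scheme. By \Cref{prop:t_exactness_flat_pullback}, whose proof does not use this lemma, the result is the truncation triangle of $\mathbf{L}s^\ast A$ for $\overline{\langle \mathbf{L}s^\ast\mathcal{S}_2 \rangle}^{(-\infty,0]}_{\otimes}$.
\item Since $s$ is flat, $\operatorname{Supp}(\mathcal{H}^k(\mathbf{L}s^\ast E))=|s|^{-1}\operatorname{Supp}(\mathcal{H}^k(E))$. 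Hence the support hypothesis holds for $\mathbf{L}s^\ast\mathcal{S}_1$ and $\mathbf{L}s^\ast\mathcal{S}_2$ on $U$.
\item The affine input (\cite[Proposition 5.2]{Hrbek:2020}, transferred to the small \'{e}tale site) then gives $\mathbf{L}s^\ast A\in\overline{\langle \mathbf{L}s^\ast\mathcal{S}_2 \rangle}^{(-\infty,0]}_{\otimes}$. This uses $\mathbf{L}s^\ast A\in\overline{\langle \mathbf{L}s^\ast\mathcal{S}_1 \rangle}^{(-\infty,0]}_{\otimes}$, which is right $t$-exactness.
\item Therefore $\mathbf{L}s^\ast\tau^{\geq 1}A=0$. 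Since $s$ is surjective and flat, $\tau^{\geq 1}A=0$, so $A\in\mathcal{A}_2$.
\end{enumerate}
This is exactly the paper's proof. Once you have it, Steps 1, 2 and 4 are superfluous. The logical order is the reverse of yours: this lemma feeds into \Cref{prop:aisles_are_compactly_generated}, not the other way round.
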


\begin{proof}
    Choose $A\in \overline{\langle \mathcal{S}_1 \rangle}^{(-\infty,0]}_{\otimes}$. 
    Consider the truncation triangle of $A$,
    \begin{displaymath}
        \tau^{\leq 0}_{\mathcal{S}_2} A \to A \to \tau^{\geq 1}_{\mathcal{S}_2} A \to (\tau^{\leq 0}_{\mathcal{S}_2} A)[1],
    \end{displaymath}
    with respect to $\overline{\langle \mathcal{S}_2 \rangle}^{(-\infty,0]}_{\otimes}$. 
    Let $s \colon U \to X$ be an \'{e}tale presentation. 
    By \Cref{prop:t_exactness_flat_pullback}, $\mathbf{L} s^\ast$ induces $t$-exact functors,
    \begin{displaymath}
        (D_{\operatorname{qc}}(X), \overline{\langle \mathcal{S}_2 \rangle}^{(-\infty,0]}_{\otimes} ) \to (D_{\operatorname{qc}}(U), \overline{\langle \mathbf{L} s^\ast \mathcal{S}_2 \rangle}^{(-\infty,0]}_{\otimes} ).
    \end{displaymath}
    Hence, we have the truncation triangle $\mathbf{L} s^\ast A$,
    \begin{displaymath}
        \mathbf{L} s^\ast \tau^{\leq 0}_{\mathcal{S}_2} A \to \mathbf{L} s^\ast A \to \mathbf{L} s_i^\ast \tau^{\geq 1}_{\mathcal{S}_2} A \to \mathbf{L} s_i^\ast (\tau^{\leq 0}_{\mathcal{S}_2} A)[1],
    \end{displaymath}
    with respect to $\overline{\langle \mathbf{L} s^\ast \mathcal{S}_2 \rangle}^{(-\infty,0]}_{\otimes}$.
    Then our hypothesis, coupled with \cite[Proposition 5.2]{Hrbek:2020}, implies $\mathbf{L} s^\ast A\in \overline{\langle \mathbf{L} s^\ast \mathcal{S}_2 \rangle}^{(-\infty,0]}_{\otimes}$. 
    Note that we are applying \cite[\href{https://stacks.math.columbia.edu/tag/071Q}{Tags 071Q}, \href{https://stacks.math.columbia.edu/tag/08H8}{08H8}, \& \href{https://stacks.math.columbia.edu/tag/07TY}{07TY}]{stacks-project}, to transfer \cite[Proposition 5.2]{Hrbek:2020} on the small Zariski site of $U$ to the small \'{e}tale site. 
    Thus, $\tau^{\geq 1}_{\mathcal{S}_2} A= 0$, and so, $A\in \overline{\langle \mathcal{S}_2 \rangle}^{(-\infty,0]}_{\otimes}$.
\end{proof}

\begin{lemma}
    \label{lem:aisles_based_off_support_pseudo_coherent}
    Let $X$ be a Noetherian algebraic space. 
    For any $\mathcal{S}\subseteq D^b_{\operatorname{coh}}(X)$, one has $\overline{\langle \mathcal{S} \rangle}^{(-\infty,0]}_{\otimes} = \overline{\langle \mathcal{S}^\prime \rangle}^{(-\infty,0]}_{\otimes}$ where $\mathcal{S}^\prime := \{ \mathcal{H}^j (E)[-j] : S\in \mathcal{S} , j\in \mathbb{Z} \}$.
\end{lemma}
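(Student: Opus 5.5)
The plan is to obtain both inclusions from \Cref{lem:aisles_based_off_support}, whose hypothesis concerns only supports of cohomology sheaves. I read $\mathcal{S}^\prime$ as $\{\mathcal{H}^j(E)[-j] : E\in\mathcal{S}, j\in\mathbb{Z}\}$. Each $\mathcal{H}^j(E)[-j]$ is a coherent sheaf placed in degree $j$, so $\mathcal{S}^\prime\subseteq D^b_{\operatorname{coh}}(X)$ and the lemma applies.

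For $\overline{\langle \mathcal{S}^\prime \rangle}^{(-\infty,0]}_{\otimes}\subseteq \overline{\langle \mathcal{S} \rangle}^{(-\infty,0]}_{\otimes}$, fix $k\in\mathbb{Z}$. The only nonzero cohomology of $\mathcal{H}^j(E)[-j]$ sits in degree $j$, so
\begin{displaymath}
    \bigcup_{F\in\mathcal{S}^\prime}\operatorname{Supp}(\mathcal{H}^k(F)) = \bigcup_{E\in\mathcal{S}}\operatorname{Supp}(\mathcal{H}^k(E)) \subseteq \bigcup_{\substack{E\in\mathcal{S}\\ i\geq k}}\operatorname{Supp}(\mathcal{H}^i(E)).
\end{displaymath}
This is exactly the hypothesis of \Cref{lem:aisles_based_off_support} with $\mathcal{S}_1=\mathcal{S}^\prime$ and $\mathcal{S}_2=\mathcal{S}$, so this inclusion follows.

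For the reverse inclusion I would apply \Cref{lem:aisles_based_off_support} with $\mathcal{S}_1=\mathcal{S}$ and $\mathcal{S}_2=\mathcal{S}^\prime$. We then need
\begin{displaymath}
    \bigcup_{E\in\mathcal{S}}\operatorname{Supp}(\mathcal{H}^k(E)) \subseteq \bigcup_{\substack{F\in\mathcal{S}^\prime\\ i\geq k}}\operatorname{Supp}(\mathcal{H}^i(F)).
\end{displaymath}
The right side contains $\operatorname{Supp}(\mathcal{H}^k(\mathcal{H}^k(E)[-k])) = \operatorname{Supp}(\mathcal{H}^k(E))$, so this holds trivially.

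The only possible obstacle is whether \Cref{lem:aisles_based_off_support} applies as stated. Its proof uses \Cref{prop:t_exactness_flat_pullback}, \'{e}tale descent of vanishing, and Hrbek's affine support criterion. All of these require only that the sets lie in $D^b_{\operatorname{coh}}(X)$, which we checked for $\mathcal{S}^\prime$, so no further work is needed. Combining the two inclusions gives equality.
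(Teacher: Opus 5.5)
Your argument is correct, and it takes a different and shorter route than the paper's.

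The paper proves this lemma by re-running the descent argument of \Cref{lem:aisles_based_off_support}. It truncates an object of one aisle with respect to the other and pulls back along an \'{e}tale presentation using the $t$-exactness of \Cref{prop:t_exactness_flat_pullback}. On the affine cover it invokes \cite[Corollary 3.8]{AlonsoTarrio/JeremiasLopez/Saorin:2010}, the affine version of this very statement, in place of \cite[Proposition 5.2]{Hrbek:2020}, and concludes by conservativity of $\mathbf{L}s^\ast$.

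You instead observe that $\mathcal{S}$ and $\mathcal{S}^\prime$ have identical degreewise support data: $\bigcup_{F\in\mathcal{S}^\prime}\operatorname{Supp}(\mathcal{H}^k(F)) = \bigcup_{E\in\mathcal{S}}\operatorname{Supp}(\mathcal{H}^k(E))$ for every $k$. So the support hypothesis of \Cref{lem:aisles_based_off_support} holds in both directions, and that lemma, used as a black box, gives both inclusions.

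Your route shows the statement is a formal corollary of \Cref{lem:aisles_based_off_support}, with no repeated descent and no second affine input. The paper's version relies on the affine result of Alonso Tarr\'{i}o--Jerem\'{i}as L\'{o}pez--Saor\'{i}n rather than on Hrbek's support description.

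There is no circularity in your route. \Cref{lem:aisles_based_off_support} depends on \Cref{prop:t_exactness_flat_pullback} and \cite[Proposition 5.2]{Hrbek:2020}, and neither uses the present lemma or \Cref{prop:aisles_are_compactly_generated}.
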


\begin{proof}
    This is argued in a similar fashion to \Cref{lem:aisles_based_off_support} where one replaces \cite[Proposition 5.2]{Hrbek:2020} with \cite[Corollary 3.8]{AlonsoTarrio/JeremiasLopez/Saorin:2010}.
\end{proof}

\begin{proposition}
    \label{prop:aisles_are_compactly_generated}
    Let $X$ be a Noetherian algebraic space. 
    If $\mathcal{S}\subseteq D^b_{\operatorname{coh}}(X)$, then $\overline{\langle \mathcal{S} \rangle}^{(-\infty,0]}_{\otimes}$ is compactly generated.
\end{proposition}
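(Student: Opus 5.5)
We want to exhibit a set of perfect complexes generating the same tensor aisle as $\mathcal{S}$. The natural candidate is dictated by the classification of \cite[Theorem 1.4]{Hrbek/Lank/Pizzirani:2025}. Let $\phi$ be the Thomason filtration
\begin{displaymath}
    \phi(n) := \bigcup_{\substack{E\in \mathcal{S} \\ i\geq n}} \operatorname{Supp}(\mathcal{H}^i(E)),
\end{displaymath}
which is specialization closed (each support is closed since the cohomology is coherent) and decreasing in $n$. The plan is to show $\overline{\langle \mathcal{S} \rangle}^{(-\infty,0]}_{\otimes} = \mathcal{U}_\phi$. The latter is compactly generated by construction.

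\textbf{First inclusion.} For $\overline{\langle \mathcal{S} \rangle}^{(-\infty,0]}_{\otimes} \subseteq \mathcal{U}_\phi$, it suffices to show $\mathcal{S}\subseteq \mathcal{U}_\phi$. By \Cref{lem:aisles_based_off_support_pseudo_coherent} we may replace $\mathcal{S}$ by the shifted cohomology sheaves $\mathcal{H}^j(E)[-j]$. It then suffices to show that a coherent sheaf $F$ with $\operatorname{Supp}(F)\subseteq \phi(j)$ satisfies $F[-j]\in\mathcal{U}_\phi$.

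We check this étale locally, mirroring the proof of \Cref{lem:aisles_based_off_support}. Take an étale presentation $s\colon U\to X$ from an affine scheme and form the truncation triangle of $F[-j]$ with respect to $\mathcal{U}_\phi$. The $t$-structure $\mathcal{U}_\phi$ is of the form $\overline{\langle \mathcal{P}\rangle}^{(-\infty,0]}_{\otimes}$ with $\mathcal{P}\subseteq\operatorname{Perf}(X)\subseteq D^b_{\operatorname{coh}}(X)$. Hence \Cref{prop:t_exactness_flat_pullback} applies and $\mathbf{L}s^\ast$ carries this truncation triangle to the truncation triangle for $\mathcal{U}_{s^{-1}\phi}$ on the affine $U$. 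There, the affine description of compactly generated aisles by supports of cohomology, namely \cite[Proposition 5.2]{Hrbek:2020} transported to the étale site as in \Cref{lem:aisles_based_off_support}, gives $\mathbf{L}s^\ast F[-j]\in\mathcal{U}_{s^{-1}\phi}$. So $\mathbf{L}s^\ast\tau^{\geq1}F[-j]=0$, and since $s$ is faithfully flat, $\tau^{\geq 1}F[-j]=0$.

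\textbf{Reverse inclusion.} For $\mathcal{U}_\phi\subseteq \overline{\langle \mathcal{S} \rangle}^{(-\infty,0]}_{\otimes}$, it suffices to show that each perfect generator $P$ of $\mathcal{U}_\phi$ lies in the right-hand side. Such a $P$ can be taken with $\operatorname{Supp}(\mathcal{H}^k(P))\subseteq\phi(k)$ for all $k$. This is exactly the hypothesis of \Cref{lem:aisles_based_off_support} applied with $\mathcal{S}_1=\{P\}$ and $\mathcal{S}_2=\mathcal{S}$, so that lemma gives the inclusion directly.

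\textbf{Main obstacle.} The delicate point is the first inclusion. There the target aisle is $\mathcal{U}_\phi$ rather than an aisle generated by objects of $D^b_{\operatorname{coh}}$ with the same supports, so we must confirm two things. First, the generators of $\mathcal{U}_\phi$ are perfect, so that \Cref{prop:t_exactness_flat_pullback} applies. Second, after pullback the filtration attached to $\mathcal{U}_{s^{-1}\phi}$ is exactly $s^{-1}\phi$. The second point should follow from compatibility of the support of $\mathcal{H}^k$ with flat pullback together with surjectivity of $|s|$.
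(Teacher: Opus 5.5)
Your proof is correct, but it is organized differently from the paper's.

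The paper works with explicit generators. It reduces to a single $E\in D^b_{\operatorname{coh}}(X)$ and replaces $E$ by $A=\bigoplus_s (i_s)_\ast\mathcal{O}_{Z_s}[-s]$, built from the reduced supports of the cohomology sheaves. It then approximates each summand by a perfect complex $P_s$ supported on $Z_s$ via \cite[\href{https://stacks.math.columbia.edu/tag/08HH}{Tag 08HH}]{stacks-project}. Applying \Cref{lem:aisles_based_off_support} in both directions gives $\overline{\langle E\rangle}^{(-\infty,0]}_{\otimes}=\overline{\langle A\rangle}^{(-\infty,0]}_{\otimes}=\overline{\langle \bigoplus_s P_s\rangle}^{(-\infty,0]}_{\otimes}$, and the proof concludes with \Cref{rmk:HLP_HLLP_proxy_aisle}.

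You instead obtain a perfect generating set $\mathcal{P}$ abstractly, from the fact that every Thomason filtration is realized in \cite[Theorem 1.4]{Hrbek/Lank/Pizzirani:2025}, and compare $\mathcal{S}$ with $\mathcal{P}$ directly. Your first inclusion, which you flag as the main obstacle, is literally \Cref{lem:aisles_based_off_support} with $\mathcal{S}_1=\mathcal{S}$ and $\mathcal{S}_2=\mathcal{P}$. This is allowed since $\operatorname{Perf}(X)\subseteq D^b_{\operatorname{coh}}(X)$ for Noetherian $X$. Your \'{e}tale-local argument just reproves that lemma. So both inclusions are one-line applications of the same lemma, and the reduction via \Cref{lem:aisles_based_off_support_pseudo_coherent} is unnecessary.

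The trade-off: the paper's route is constructive and produces concrete generators. Yours is shorter and gives more information. It identifies $\overline{\langle \mathcal{S}\rangle}^{(-\infty,0]}_{\otimes}$ with $\mathcal{U}_\phi$ for the explicit filtration $\phi(n)=\bigcup_{E\in\mathcal{S},\, i\geq n}\operatorname{Supp}(\mathcal{H}^i(E))$. That is exactly the filtration $\phi_{\mathcal{A}}$ the proof of \Cref{thm:dualizing_complex_classification} needs, which the paper only defines indirectly.

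One small point: compact generation is defined without the tensor. So you should pass from $\overline{\langle\mathcal{P}\rangle}^{(-\infty,0]}_{\otimes}$ to $\overline{\langle(D^{\leq 0}_{\operatorname{qc}}(X)\cap\operatorname{Perf}(X))\otimes^{\mathbf{L}}\mathcal{P}\rangle}^{(-\infty,0]}$ via \Cref{rmk:HLP_HLLP_proxy_aisle}, as the paper does, rather than asserting compact generation \emph{by construction}.
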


\begin{proof}
    Applying \Cref{lem:aisles_based_off_support_pseudo_coherent}, we can reduce to the case $\mathcal{S}$ consists of a single object $E$. 
    By shifting, we may impose $E$ be concentrated in degrees $[0,b]\cap \mathbb{Z}$. 
    Additionally, for each $k\in [0,b]\cap \mathbb{Z}$, we can assume $\operatorname{Supp}(\mathcal{H}^k (E))$ is irreducible.
    For each $s\in [0,b]\cap \mathbb{Z}$, let $i_s \colon Z_s \to X$ be the associated closed immersion of the reduced induced closed subspace structure $\operatorname{Supp}(\mathcal{H}^s(E))$. 
    Define $A:=\oplus^b_{s=0} (i_s)_\ast \mathcal{O}_{Z_s} [-s]$. 
    The support of $\mathcal{H}^k (A)$ coincides with that of $\mathcal{H}^k (E)$ for all $k\in \mathbb{Z}$.
    It follows that $\operatorname{Supp}(\mathcal{H}^k (A))\subseteq \cup_{i\geq k} \operatorname{Supp}(\mathcal{H}^i (E))$.

    By \Cref{lem:aisles_based_off_support}, $\overline{\langle A \rangle}^{(-\infty,0]}_{\otimes}\subseteq \overline{\langle E \rangle}^{(-\infty,0]}_{\otimes}$. 
    We check the reverse containment. From similar reasoning, $\operatorname{Supp}(\mathcal{H}^k (E))\subseteq \cup_{i\geq k} \operatorname{Supp}(\mathcal{H}^i (A))$. 
    Again, from \Cref{lem:aisles_based_off_support}, $\overline{\langle E \rangle}^{(-\infty,0]}_{\otimes}\subseteq \overline{\langle A \rangle}^{(-\infty,0]}_{\otimes}$. 
    Thus, $\overline{\langle E \rangle}^{(-\infty,0]}_{\otimes} = \overline{\langle A \rangle}^{(-\infty,0]}_{\otimes}$. 
    By \cite[\href{https://stacks.math.columbia.edu/tag/08HH}{Tag 08HH}]{stacks-project}, for each $(i_s)_\ast \mathcal{O}_{Z_s} [-s]$, there exist $P_s \in \operatorname{Perf}_{Z_s}(X)$ and morphism $P_s \xrightarrow{\alpha_s}(i_s)_\ast \mathcal{O}_{Z_s} [-s]$ such that $\mathcal{H}^t (\alpha_s)$ is an isomorphism for $t>s-1$ and $\mathcal{H}^s (\alpha_s)$ is surjective. 
    Then, from \Cref{lem:aisles_based_off_support}, $\overline{\langle \oplus^b_{s=0} P_s \rangle}^{(-\infty,0]}_{\otimes} = \overline{\langle A \rangle}^{(-\infty,0]}_{\otimes}$. 
    Consequently, from \Cref{rmk:HLP_HLLP_proxy_aisle},
    the desired claim follows.
\end{proof}

\begin{corollary}
    [cf.\ {\cite[Theorem 3.10]{AlonsoTarrio/JeremiasLopez/Saorin:2010}}]
    \label{thm:ALS_global_full_classification}
    Let $X$ be a Noetherian algebraic space. 
    Then the following are equivalent for any $\otimes$-aisle $\mathcal{A}$ on $D_{\operatorname{qc}}(X)$:
    \begin{enumerate}
        \item \label{thm:ALS_global_full_classification1} $\mathcal{A}=\mathcal{U}_{\phi}$ associated to a Thomason filtration $\phi$ on $X$
        \item \label{thm:ALS_global_full_classification2} $\mathcal{A} = \overline{\langle\mathcal{P} \rangle}^{(-\infty,0]}_{\otimes}$ for some $\mathcal{P}\subseteq \operatorname{Perf}(X)$
        \item \label{thm:ALS_global_full_classification3} $\mathcal{A} = \overline{\langle \mathcal{B} \rangle}^{(-\infty,0]}_{\otimes}$ for some $\mathcal{B}\subseteq D^b_{\operatorname{coh}}(X)$.
    \end{enumerate}
    In particular, for each case above, there is $\mathcal{Q}\subseteq \operatorname{Perf}(X)$ such that $\mathcal{A} = \overline{\langle \mathcal{Q} \rangle}^{(-\infty,0]}$ (e.g.\ no tensor needed).
\end{corollary}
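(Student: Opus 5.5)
The plan is to prove the cycle of implications \eqref{thm:ALS_global_full_classification1}$\Rightarrow$\eqref{thm:ALS_global_full_classification2}$\Rightarrow$\eqref{thm:ALS_global_full_classification3}$\Rightarrow$\eqref{thm:ALS_global_full_classification1}, and then read off the final claim.

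For \eqref{thm:ALS_global_full_classification1}$\Rightarrow$\eqref{thm:ALS_global_full_classification2}, I will unwind the definition of $\mathcal{U}_\phi$. By \cite[Theorem 1.4]{Hrbek/Lank/Pizzirani:2025}, $\mathcal{U}_\phi$ is the $\otimes$-aisle generated by some $\mathcal{P}\subseteq\operatorname{Perf}(X)$. This identification is exactly how the bijection with Thomason filtrations was phrased in \Cref{sec:prelim_Thomason}. For \eqref{thm:ALS_global_full_classification2}$\Rightarrow$\eqref{thm:ALS_global_full_classification3}, the step is trivial: perfect complexes on a Noetherian algebraic space are bounded pseudocoherent, so $\operatorname{Perf}(X)\subseteq D^b_{\operatorname{coh}}(X)$ and we may take $\mathcal{B}=\mathcal{P}$.

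The substantive step is \eqref{thm:ALS_global_full_classification3}$\Rightarrow$\eqref{thm:ALS_global_full_classification1}. Given $\mathcal{B}\subseteq D^b_{\operatorname{coh}}(X)$, \Cref{prop:aisles_are_compactly_generated} says $\overline{\langle\mathcal{B}\rangle}^{(-\infty,0]}_{\otimes}$ is compactly generated. I will need slightly more than that statement: the proof there produces perfect complexes $P_s$ with $\overline{\langle E\rangle}^{(-\infty,0]}_{\otimes}=\overline{\langle\oplus_s P_s\rangle}^{(-\infty,0]}_{\otimes}$ for each object $E$. By \Cref{lem:aisles_based_off_support_pseudo_coherent}, one may reduce to single objects, and the aisle generated by a set is the smallest cocomplete $\otimes$-preaisle containing the aisles generated by its members. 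Hence we get a set $\mathcal{P}\subseteq\operatorname{Perf}(X)$ with $\mathcal{A}=\overline{\langle\mathcal{P}\rangle}^{(-\infty,0]}_{\otimes}$. This gives \eqref{thm:ALS_global_full_classification2}, and \cite[Theorem 1.4]{Hrbek/Lank/Pizzirani:2025} then gives $\mathcal{A}=\mathcal{U}_\phi$, where $\phi(n)=\bigcup_{j\geq n,P\in\mathcal{P}}\operatorname{Supp}\mathcal{H}^j(P)$.

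For the final claim, I will apply \Cref{rmk:HLP_HLLP_proxy_aisle} to the set $\mathcal{P}$. This gives $\mathcal{A}=\overline{\langle\mathcal{Q}\rangle}^{(-\infty,0]}$ with $\mathcal{Q}:=(D^{\leq 0}_{\operatorname{qc}}(X)\cap\operatorname{Perf}(X))\otimes^{\mathbf{L}}\mathcal{P}$. This $\mathcal{Q}$ lies in $\operatorname{Perf}(X)$, since perfect complexes are closed under derived tensor product. If needed, I will replace each class by a set of isomorphism representatives, using that $\operatorname{Perf}(X)$ is essentially small.

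The main obstacle I expect is the passage from ``compactly generated'' to ``generated as a $\otimes$-aisle by perfects''. The issue is reconciling the reduction to a single object in \Cref{prop:aisles_are_compactly_generated} with an arbitrary, possibly infinite, $\mathcal{B}$. I would handle it by taking the union over $E\in\mathcal{B}$ of the sets $\{P_s\}$ constructed for each $E$. I would then verify equality of the two generated aisles by checking containment of generators in both directions, as in the proof of \Cref{prop:aisles_are_compactly_generated}.
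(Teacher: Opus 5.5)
Your proposal is correct and matches the paper's proof. The paper gets (1)$\Leftrightarrow$(2) from \cite[Theorem 1.4]{Hrbek/Lank/Pizzirani:2025}, (2)$\Rightarrow$(3) from $\operatorname{Perf}(X)\subseteq D^b_{\operatorname{coh}}(X)$, (3)$\Rightarrow$(2) from \Cref{prop:aisles_are_compactly_generated}, and the final claim from \Cref{rmk:HLP_HLLP_proxy_aisle}. The obstacle you anticipate does not arise: if $\mathcal{A}=\overline{\langle\mathcal{Q}\rangle}^{(-\infty,0]}$ with $\mathcal{Q}$ compact, hence perfect, and $\mathcal{A}$ is a cocomplete $\otimes$-preaisle, then $\mathcal{A}=\overline{\langle\mathcal{Q}\rangle}^{(-\infty,0]}_{\otimes}$ directly, so you need neither to reopen that proof nor to take unions over $\mathcal{B}$.
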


\begin{proof}
    $\eqref{thm:ALS_global_full_classification1} \iff \eqref{thm:ALS_global_full_classification2}$ is \cite[Theorem 1.4]{Hrbek/Lank/Pizzirani:2025}; 
    $\eqref{thm:ALS_global_full_classification2} \implies \eqref{thm:ALS_global_full_classification3}$ is straightforward; $\eqref{thm:ALS_global_full_classification3} \implies \eqref{thm:ALS_global_full_classification2}$ is \Cref{prop:aisles_are_compactly_generated}.
\end{proof}

\begin{proof}
    [Proof of \Cref{thm:dualizing_complex_classification}]
    To start, we check that for any $\otimes$-aisle $\mathcal{A}$ on $D^b_{\operatorname{coh}}(X)$, there exists a unique Thomason filtration $\phi_{\mathcal{A}}$ on $X$ such that $\mathcal{A} = D^b_{\operatorname{coh}}(X)\cap \mathcal{U}_\phi$. 
    By \Cref{prop:aisles_are_compactly_generated}, $\overline{\langle \mathcal{A} \rangle}^{(-\infty,0]}_{\otimes}$ is compactly generated on $D_{\operatorname{qc}}(X)$. 
    Define $\phi_{\mathcal{A}}$ to be the associated Thomason filtration of $\overline{\langle \mathcal{A} \rangle}^{(-\infty,0]}_{\otimes}$ on $D_{\operatorname{qc}}(X)$. 
    Hence, $\overline{\langle \mathcal{A} \rangle}^{(-\infty,0]}_{\otimes} = \mathcal{U}_{\phi_{\mathcal{A}}}$. 

    It can be checked that $\mathcal{A} \subseteq \mathcal{U}_{\phi_{\mathcal{A}}} \cap D^b_{\operatorname{coh}}(X)$. 
    We prove the reverse inclusion. Choose $E\in \mathcal{U}_{\phi_{\mathcal{A}}} \cap D^b_{\operatorname{coh}}(X)$. There exists the truncation triangle of $E$ with respect to $\mathcal{A}$,
    \begin{displaymath}
        \tau^{\leq 0} E \to E \to \tau^{\geq 1} E \to (\tau^{\leq 0} E)[1].
    \end{displaymath}
    Note that this distinguished triangle is the truncation triangle for $E$ with respect to $\mathcal{U}_{\phi_{\mathcal{A}}}$ (see e.g.\ proof of \Cref{lem:dbcoh_tensor_via_perfect_tensor}). 
    As $E\in \mathcal{U}_{\phi_{\mathcal{A}}}$, we obtain $\tau^{\leq 0} E \to E$ is an isomorphism. 
    Thus, $\mathcal{A} = \mathcal{U}_{\phi_{\mathcal{A}}} \cap D^b_{\operatorname{coh}}(X)$.

    Given a pair of distinct $\otimes$-aisles $\mathcal{A}$ and $\mathcal{B}$ on $D^b_{\operatorname{coh}}(X)$, we must have that $\mathcal{U}_{\phi_{\mathcal{A}}} \not= \mathcal{U}_{\phi_{\mathcal{B}}}$. 
    To see, observe that if $\mathcal{U}_{\phi_{\mathcal{A}}} = \mathcal{U}_{\phi_{\mathcal{B}}}$, one has
    \begin{displaymath}
        \mathcal{A} = \mathcal{U}_{\phi_{\mathcal{A}}} \cap D^b_{\operatorname{coh}}(X) = \mathcal{U}_{\phi_{\mathcal{B}}} \cap D^b_{\operatorname{coh}}(X) = \mathcal{B}.
    \end{displaymath}
    Yet, this is absurd. 
    Hence, the assignment $\mathcal{A} \to \phi_{\mathcal{A}}$ is injective. 
    By \Cref{thm:restrict_iff_weak_cousin}, $\phi_{\mathcal{A}}$ satisfies the weak Cousin condition.
    Conversely, given any Thomason filtration $\phi$ on $X$ satisfying the weak Cousin condition, \Cref{thm:restrict_iff_weak_cousin} yields that $\mathcal{U}_\phi$ is an $\otimes$-aisle which restricts to $D^b_{\operatorname{coh}}$. 
    Therefore, the classification in \cite[Theorem 1.4]{Hrbek/Lank/Pizzirani:2025} induces the desired bijection.
\end{proof}

\begin{appendix}

\section{Verdier localizations}
\label{app:verdier_loc}

We record space theoretic versions of \cite[Appendix A]{Hall:2022}, \cite[Appendix A]{Lank/Peng:2025}, and \cite[Appendix B]{Hall/Lamarche/Lank/Peng:2025}.
While these are straightforward analogs, the details on the small \'{e}tale site are spelled out for convenience.

\begin{notation}
    Let $X$ be a Noetherian algebraic space. 
    We write $\overline{\operatorname{coh}}(X)\subseteq\operatorname{Qcoh}(X)$ for the strictly full subcategory consisting of objects $E$ such that $\Gamma(\operatorname{Spec}(R),E)$ is countably generation $R$-modules for all \'{e}tale morphisms $\operatorname{Spec}(R)\to X$. 
\end{notation}

\begin{remark}
    \label{rmk:countable_serre_subcat}
    Let $X$ be a Noetherian algebraic space. 
    It is clear that every zero object is in $\overline{\operatorname{coh}}(X)$.
    Moreover, every quotient of an object from $\overline{\operatorname{coh}}(X)$ also satisfies having countable sections over affine schemes \'{e}tale over $X$. 
    Furthermore, it can be verified that subobjects of objects from $\overline{\operatorname{coh}}(X)$ satisfying having countable sections over affine schemes \'{e}tale over $X$ (see e.g.\ \cite[Lemma A.1]{Hall:2022}).
    Additionally, it is straightforward to check that $\overline{\operatorname{coh}}(X)$ is closed under extensions. 
    Applying \cite[\href{https://stacks.math.columbia.edu/tag/02MP}{Tags 02MP} \& \href{https://stacks.math.columbia.edu/tag/03M1}{03M1}]{stacks-project}, it follows that $\overline{\operatorname{coh}}(X)$ is a Serre subcategory of $\operatorname{Mod}(X)$.
    Then, by \cite[\href{https://stacks.math.columbia.edu/tag/06UQ}{Tag 06UQ}]{stacks-project}, $D_{\overline{\operatorname{coh}}(X)}(X)$ is a triangulated subcategory of $D(X)$.
    In fact, since objects of $D_{\overline{\operatorname{coh}}(X)}(X)$ have quasi-coherent cohomology, it is a triangulated subcategory of $D_{\operatorname{qc}}(X)$.
    Define $D^b_{\overline{\operatorname{coh}}}(X) := D^b_{\operatorname{qc}}(X) \cap D_{\overline{\operatorname{coh}}(X)}(X)$.
\end{remark}

\begin{lemma}
    \label{lem:halla3}
    Let $X$ be a Noetherian algebraic space. 
    Fix some $a\leq b$ in $\mathbb{Z}$.
    Consider $E\in D_{\operatorname{qc}}(X)$ such that $\mathcal{H}^i (E)\cong 0$ for all $i\not\in [a,b]$ and $\mathcal{H}^i (E)$ has countable sections over affine schemes \'{e}tale over $X$.
    Then $E\cong \operatorname{hocolim}_s E_s$ for some $E_s\in D_{\operatorname{coh}}(X)$ such that $\mathcal{H}^i (E_s)\cong 0$ for all $i\not\in [a,b]$ and $\mathcal{H}^i (E_s)$ has countable sections over affine schemes \'{e}tale over $X$.
\end{lemma}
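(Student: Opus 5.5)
Induction on the amplitude $b-a$, following \cite[Lemma A.3]{Hall:2022}, with the scheme-theoretic inputs replaced by their small \'{e}tale site analogues.

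\textbf{Base case $a=b$.} Up to shift, $E\cong \mathcal{F}[-a]$ with $\mathcal{F}$ a quasi-coherent sheaf having countable sections over affine schemes \'{e}tale over $X$. Since $X$ is Noetherian, $\mathcal{F}$ is the filtered union of its coherent subsheaves \cite[\href{https://stacks.math.columbia.edu/tag/07UV}{Tag 07UV}]{stacks-project}. The countability hypothesis is what lets us replace this filtered system by a sequence. Choose an \'{e}tale presentation $\operatorname{Spec}(R)\to X$. Then $\Gamma(\operatorname{Spec}(R),\mathcal{F})$ is countably generated, so we can pick generators $x_1,x_2,\dots$. Let $\mathcal{F}_s$ be a coherent subsheaf of $\mathcal{F}$ whose pullback contains $x_1,\dots,x_s$, chosen increasing in $s$. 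This is possible because a filtered union of coherent subsheaves pulls back to a filtered union, and each finite set of sections lies in some member. The pullback is faithfully flat, so $\operatorname{colim}_s \mathcal{F}_s\to\mathcal{F}$ is an isomorphism, since it is one after pullback. Each $\mathcal{F}_s$ is coherent, and subobjects of objects in $\overline{\operatorname{coh}}(X)$ stay in $\overline{\operatorname{coh}}(X)$ by \Cref{rmk:countable_serre_subcat}. Finally, a sequential colimit of sheaves in a single degree is the homotopy colimit, because filtered colimits are exact and $\mathcal{H}^i$ commutes with hocolim.

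\textbf{Inductive step.} Consider the triangle
\[
\tau^{\le b-1}E\to E\to \mathcal{H}^b(E)[-b]\to.
\]
By induction, $\tau^{\le b-1}E\cong\operatorname{hocolim}G_s$ and $\mathcal{H}^b(E)[-b]\cong \operatorname{hocolim}\mathcal{F}_s[-b]$, with $G_s$ and $\mathcal{F}_s$ as in the statement. We want to lift the connecting map $\delta\colon \mathcal{H}^b(E)[-b]\to \tau^{\le b-1}E[1]$ to compatible maps $\delta_s\colon \mathcal{F}_s[-b]\to G_{t(s)}[1]$ for some reindexing $t(s)$. The key input is that $\mathcal{F}_s[-b]$ is pseudocoherent and bounded, while the targets are uniformly bounded below (degrees $\ge a$). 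Under these conditions, $\operatorname{Hom}(\mathcal{F}_s[-b],-)$ commutes with such sequential homotopy colimits. This is the space analogue of \cite[\href{https://stacks.math.columbia.edu/tag/08HG}{Tag 08HG}]{stacks-project} / \cite[\href{https://stacks.math.columbia.edu/tag/0GQV}{Tag 0GQV}]{stacks-project}, obtained via compactness of $\mathcal{F}_s[-b]$ relative to bounded-below hocolims. Hence each restriction $\delta|_{\mathcal{F}_s[-b]}$ factors through some $G_{t(s)}[1]$. After passing to a subsequence, and using that the relevant $\operatorname{colim}\operatorname{Hom}$ computes morphisms, we can arrange these factorizations to form a compatible tower.

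Take $E_s$ to be the cocone of $\delta_s$. Then $E_s$ lies in $D_{\operatorname{coh}}(X)$, has cohomology in $[a,b]$, and its cohomology lies in $\overline{\operatorname{coh}}(X)$ because the latter is a Serre subcategory and each $\mathcal{H}^i(E_s)$ is an extension of a subobject and a quotient of sheaves in it. Since hocolim of triangles is a triangle on cohomology by exactness of filtered colimits, we get $\operatorname{hocolim}E_s\cong E$. We may also use the five lemma on cohomology after choosing a comparison map.

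\textbf{Main obstacle.} The hard step is making the lifted connecting maps compatible, since homotopy colimits are not functorial. The first thing I would try is to avoid this problem altogether: build the tower directly on sheaf-level representatives. Concretely, represent $E$ by a bounded complex of quasi-coherent sheaves, reducing to that case via \cite[\href{https://stacks.math.columbia.edu/tag/08H1}{Tag 08H1}]{stacks-project}, and exhaust it by a sequence of subcomplexes with coherent terms generated by countably many sections. Failing that, I would use the Mittag-Leffler/subsequence argument of \cite{Hall:2022}.
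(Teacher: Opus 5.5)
\textbf{Same approach as the paper, but the final step is missing.} Your route matches the paper's: induction on the amplitude, a countable exhaustion by coherent subsheaves in the base case, and in the inductive step you split off $\mathcal{H}^b(E)[-b]$. You then factor the restricted connecting map through a finite stage $G_{t(s)}[1]$, using that bounded pseudocoherent objects are compact against uniformly bounded-below sequential homotopy colimits, and take cocones. Your subsequence argument making the $\delta_s$ strictly compatible is fine, and more explicit than the paper's.

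\textbf{The gap.} Exactness of filtered colimits only shows that the long exact cohomology sequences of the triangles $G_{t(s)}\xrightarrow{g_s}E_s\xrightarrow{p_s}\mathcal{F}_s[-b]\xrightarrow{\delta_s}G_{t(s)}[1]$ pass to the colimit. It does not produce a morphism $\operatorname{hocolim}_sE_s\to E$, and objects with isomorphic cohomology sheaves need not be isomorphic. Such a morphism exists once the TR3 fill-ins $u_s\colon E_s\to E$ satisfy $u_{s+1}\circ e_s=u_s$ for the transition maps $e_s\colon E_s\to E_{s+1}$. Because cones are not functorial, this fails for arbitrary choices. So the ``comparison map'' you invoke is exactly what must be constructed, and this, not the compatibility of the $\delta_s$, is the non-formal point. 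The paper handles it by choosing the fill-ins to be good morphisms of triangles in the sense of \cite[Definition 1.3.14]{Neeman:2001}. Together with the argument of \cite{MathOverflow:2026}, the homotopy colimit of the tower is then a distinguished triangle $\tau^{\leq b-1}E\to\operatorname{hocolim}_sE_s\to\mathcal{H}^b(E)[-b]\xrightarrow{\delta}$, whence $\operatorname{hocolim}_sE_s\cong E$.

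\textbf{A direct repair with your own input.} Write $\iota\colon\tau^{\leq b-1}E\to E$ and $\pi\colon E\to\mathcal{H}^b(E)[-b]$. The defect $u_{s+1}e_s-u_s$ is killed by $\pi$ and by precomposition with $g_s$. There are no nonzero maps from objects of $D^{\leq b}_{\operatorname{qc}}(X)$ to $\mathcal{H}^b(E)[-b-1]$, so the defect equals $\iota\circ h_s\circ p_s$ for some $h_s\colon\mathcal{F}_s[-b]\to\tau^{\leq b-1}E$. By compactness, $h_s$ factors as $\mathcal{F}_s[-b]\xrightarrow{k}G_{t'}\to\tau^{\leq b-1}E$. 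Enlarge $t(s+1)$ to $t'$, and re-choose $u_{s+1}$ compatibly with the comparison map between the old and new $E_{s+1}$; the same degree argument allows this. Then replacing $e_s$ by $e_s-g_{s+1}\circ k\circ p_s$ gives $u_{s+1}e_s=u_s$. The induced map $\operatorname{hocolim}_sE_s\to E$ is an isomorphism on every $\mathcal{H}^i$ by the five lemma.

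\textbf{Your sheaf-level fallback.} The terms of a bounded quasi-coherent representative $K$ of $E$ need not have countable sections, so no sequence of coherent subcomplexes exhausts $K$. Instead, choose $K_1\subseteq K_2\subseteq\cdots$ with $\operatorname{colim}_n\mathcal{H}^i(K_n)\cong\mathcal{H}^i(K)$. Do this by lifting countably many generators of the cohomology over an \'{e}tale presentation, and by killing the coherent kernels of $\mathcal{H}^i(K_n)\to\mathcal{H}^i(K)$ at the next stage. Done this way, the fallback works and avoids the triangulated-category issue altogether.
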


\begin{proof}
    Recall that for any $E\in \operatorname{Qcoh}(X)$ which has countable sections over affine schemes \'{e}tale over $X$, $E\cong \operatorname{hocolim}_s E_s$ in $D_{\operatorname{qc}}(X)$ where $E_s \in \operatorname{coh}(X)$ and $E_s \subseteq E$.
    To see, use that $E$ is a filtered colimit of its coherent subsheaves \cite[\href{https://stacks.math.columbia.edu/tag/07UV}{Tag 07UV}]{stacks-project}, and that filtered colimits are exact. 
    Since $E$ has countable sections over affine schemes \'{e}tale over $X$, one can refine this filtered colimit to a countable filtered colimit. 
    We prove the desired claim by induction on $n:= |b-a|\geq 0$. 

    By shifting, we can impose that $b=0$. 
    There exists nothing to show in the base case. 
    Applying the observation above with the inductive hypothesis, it follows that 
    $\mathcal{H}^0 (E) \cong \operatorname{hocolim}_s E^0_s$ and $\tau^{\leq -1} E \cong \operatorname{hocolim}_s F_s$ where $E^0_s\in \operatorname{coh}(X)$ and $F_s \in D_{\operatorname{qc}}(X)$ such that $\mathcal{H}^i (F_s)$ are coherent, and $\mathcal{H}^i (F_s)\cong 0$ for all $i\in [a,-1]$. 
    We have a distinguished triangle
    \begin{displaymath}
        \tau^{\leq -1} E \to E \to \mathcal{H}^0 (E)[0] \xrightarrow{\theta} (\tau^{\leq -1} E)[1].
    \end{displaymath}
    We obtain the desired $E_s$ by induction on $s$. 
    For each $s$, we obtain an induced morphism $\theta_s \colon E^0_s \to \mathcal{H}^0 (E)[0] \xrightarrow{\theta} (\tau^{\leq -1} E)[1]$. Since $F_s$ is concentrated in degrees $[a,-1]$ and $E^0_s$ are coherent, there exists $r_s$ such that $\theta_s$ factors through $\theta^\prime_s \colon E^0_s \to F_{r_s}[1]$. If needed, we can impose $s\geq s^\prime$ so that $r_s \geq r_{s^\prime}$. Set $E_s$ to be the cone of $\theta^\prime_s$. 
    This yields a morphism of distinguished triangles,
    \begin{displaymath}
        \begin{tikzcd}
            {F_{r_s}} & {E_s} & {E^0_s} & {F_{r_s}[1]} \\
            {F_{r_{s+1}}} & {E_{s+1}} & {E^0_{s+1}} & {F_{r_{s+1}}[1].}
            \arrow[from=1-1, to=1-2]
            \arrow[from=1-1, to=2-1]
            \arrow[from=1-2, to=1-3]
            \arrow["\exists"', dashed, from=1-2, to=2-2]
            \arrow[from=1-3, to=1-4]
            \arrow[from=1-3, to=2-3]
            \arrow[from=1-4, to=2-4]
            \arrow[from=2-1, to=2-2]
            \arrow[from=2-2, to=2-3]
            \arrow[from=2-3, to=2-4]
        \end{tikzcd}
    \end{displaymath}
    The morphism $e_s$ can be chosen so that the morphism of distinguished triangles is good in the sense of \cite[Definition 1.3.14]{Neeman:2001}. 
    Arguing like \cite{MathOverflow:2026}, we get a distinguished triangle 
    \begin{displaymath}
        \tau^{\leq -1}E \to \operatorname{hocolim}_s E_s \to \mathcal{H}^0 (E)[0] \xrightarrow{\theta} (\tau^{\leq -1} E)[1].
    \end{displaymath}
    Consequently, $\operatorname{hocolim}_s E_s \cong E$, which completes the proof.
\end{proof}

\begin{lemma}
    \label{lem:verdier}
    Let $X$ be a Noetherian algebraic space. 
    Suppose $j\colon U\to X$ is an open immersion. 
    Then there exists a Verdier localization,
    \begin{displaymath}
        D^b_{\operatorname{coh},|X| \setminus |U|} (X) \to D^b_{\operatorname{coh}}(X) \xrightarrow{\mathbf{L} j^\ast} D^b_{\operatorname{coh}}(U).
    \end{displaymath}
\end{lemma}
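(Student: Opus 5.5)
The plan is to follow the standard Verdier-localization argument of \cite[Appendix A]{Hall:2022}, transported to the small \'{e}tale site. Three things have to be shown:
\begin{enumerate}
    \item $\mathbf{L}j^\ast$ sends $D^b_{\operatorname{coh}}(X)$ to $D^b_{\operatorname{coh}}(U)$ and annihilates $D^b_{\operatorname{coh},|X|\setminus|U|}(X)$.
    \item $\mathbf{L}j^\ast$ is essentially surjective, up to isomorphism, onto $D^b_{\operatorname{coh}}(U)$.
    \item The induced functor $D^b_{\operatorname{coh}}(X)/D^b_{\operatorname{coh},|X|\setminus|U|}(X)\to D^b_{\operatorname{coh}}(U)$ is fully faithful.
\end{enumerate}

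Step (1) is immediate, since $j$ is flat and restriction is $t$-exact for the standard $t$-structures.

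For step (3), I would use the adjunction $\mathbf{L}j^\ast\dashv \mathbf{R}j_\ast$ on $D_{\operatorname{qc}}$ and the fact that $\mathbf{L}j^\ast\mathbf{R}j_\ast\cong\operatorname{id}$. Together these show that $D_{\operatorname{qc}}(X)\to D_{\operatorname{qc}}(U)$ is a Verdier quotient by $D_{\operatorname{qc},|X|\setminus|U|}(X)$. This is the recollement of \cite[Lemma A.4]{GuisadoVillaalgordo/Lank:2026}.

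Fully faithfulness then comes down to a calculus-of-fractions check. Given $E,F\in D^b_{\operatorname{coh}}(X)$ and a morphism $\mathbf{L}j^\ast E\to \mathbf{L}j^\ast F$, we get a morphism $E\to \mathbf{R}j_\ast\mathbf{L}j^\ast F$. The aim is to factor it through some $F'\in D^b_{\operatorname{coh}}(X)$ with a map $F\to F'$ whose restriction to $U$ is an isomorphism; the dual statement handles the kernel of the functor. For this, write $\mathbf{R}j_\ast\mathbf{L}j^\ast F$ as a homotopy colimit of objects that are coherent and agree with $F$ on $U$. Then use compactness-type arguments for the bounded coherent source $E$ against a bounded-below system.

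Everything therefore reduces to essential surjectivity, which I expect to be the main obstacle. Here I would invoke \Cref{lem:halla3}. Given $G\in D^b_{\operatorname{coh}}(U)$, the complex $\mathbf{R}j_\ast G$ lies in $D^b_{\operatorname{qc}}(X)$, because $j$ is quasi-compact of finite cohomological dimension by \Cref{lem:pushforward_sends_shift_of_aisle_to_aisle}. Its cohomology sheaves have countable sections over affine schemes \'{e}tale over $X$. This holds because on such an affine $\operatorname{Spec}(R)$ they are computed from \v{C}ech complexes of the finitely many affines covering $\operatorname{Spec}(R)\times_X U$, with coherent, hence countably generated over Noetherian rings, terms. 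So $\mathbf{R}j_\ast G$ lies in $D^b_{\overline{\operatorname{coh}}}(X)$, which is triangulated by \Cref{rmk:countable_serre_subcat}.

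\Cref{lem:halla3} then writes $\mathbf{R}j_\ast G\cong\operatorname{hocolim}_s E_s$ with $E_s\in D^b_{\operatorname{coh}}(X)$, all concentrated in a fixed range $[a,b]$. Restricting to $U$ gives $G\cong \mathbf{L}j^\ast\mathbf{R}j_\ast G\cong\operatorname{hocolim}_s \mathbf{L}j^\ast E_s$. Since $G$ is bounded coherent on a Noetherian space and the system has uniformly bounded amplitude, the identity of $G$ factors through some $\mathbf{L}j^\ast E_s$. The factorization uses that $\operatorname{Hom}(G,-)$ commutes with such countable homotopy colimits of uniformly bounded complexes, checked via the standard $t$-structure and the fact that $\operatorname{Hom}(G,-)$ commutes with filtered colimits of quasi-coherent sheaves for coherent $G$ on a qcqs space.

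This only shows that $G$ is a direct summand of $\mathbf{L}j^\ast E_s$. To upgrade to an actual preimage, I would follow Thomason's trick: use the idempotent together with the closedness of $D^b_{\operatorname{coh}}$ under cones to build $E_s\oplus E_s[1]$-type corrections. Alternatively, and more directly, one can extend coherent sheaves from $U$ to $X$ (\cite[Tag 07UV]{stacks-project}) and induct on the amplitude of $G$, lifting the connecting morphisms via the fully faithfulness established above. I would try the latter first.
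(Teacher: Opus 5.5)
You use the same ingredients as the paper: \v{C}ech complexes to place $\mathbf{R}j_\ast G$ in $D^b_{\overline{\operatorname{coh}}}(X)$, \Cref{lem:halla3}, and factoring maps out of a bounded coherent object through a uniformly bounded homotopy colimit. But you miss the observation that makes essential surjectivity immediate, which is why you only obtain a direct summand. The proof of \Cref{lem:halla3} builds the $E_s$ from coherent \emph{subsheaves}, so the maps $\mathcal{H}^k(E_s)\to\mathcal{H}^k(\mathbf{R}j_\ast G)$ are monomorphisms. The lemma's statement omits this, but the paper uses exactly this property. Since $j^\ast$ is exact, the sheaves $\mathcal{H}^k(\mathbf{L}j^\ast E_s)$ form an increasing chain of coherent subsheaves of $\mathcal{H}^k(G)$ whose union is $\mathcal{H}^k(G)$. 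This chain stabilizes because $U$ is Noetherian, and only finitely many $k$ occur. Hence $\mathbf{L}j^\ast E_s\to G$ is an isomorphism for $s\gg 0$, and no splitting of the identity is needed. Your step (3) already depends on this fact. Coherent approximants of $\mathbf{R}j_\ast\mathbf{L}j^\ast F$ that ``agree with $F$ on $U$'' come precisely from this stabilization applied to $G=\mathbf{L}j^\ast F$. Likewise, faithfulness needs approximants of the fiber of $F\to\mathbf{R}j_\ast\mathbf{L}j^\ast F$ supported in $Z=|X|\setminus|U|$, which again uses the subsheaf property. So whatever justifies your step (3) gives step (2) directly.

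Your first repair fails as described. Once the essential image is known to be triangulated (which needs fullness), a summand $G$ of an object in it only yields $G\oplus G[1]$ in the image. To get $G$ itself you would need Thomason's classification of dense subcategories together with surjectivity of $K_0(D^b_{\operatorname{coh}}(X))\to K_0(D^b_{\operatorname{coh}}(U))$. Your second repair is sound once fullness is established: extend $\mathcal{H}^b(G)$ to a coherent sheaf on $X$, represent the connecting morphism by a roof, and take its cone. However, extending a coherent sheaf is just the degree-zero case of the same stabilization argument, so this is a longer road to the paper's one-line step. Your roof-by-roof check of fullness and faithfulness is equivalent in substance to the paper's route. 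The paper instead obtains the localization $D^b_{\overline{\operatorname{coh}},Z}(X)\to D^b_{\overline{\operatorname{coh}}}(X)\to D^b_{\overline{\operatorname{coh}}}(U)$ from $\mathbf{L}j^\ast\mathbf{R}j_\ast\cong 1$. It then checks a one-sided condition: for $\phi\colon E\to F$ with $\mathbf{L}j^\ast\phi$ invertible and $F$ coherent, it approximates the fiber of $\phi$ by coherent objects supported in $Z$.
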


\begin{proof}
    Set $Z := |X| \setminus |U|$. 
    First, we claim that $\mathbf{R}j_\ast D^b_{\overline{\operatorname{coh}}}(U)\subseteq D^b_{\overline{\operatorname{coh}}}(X)$. 
    Fix $E \in D^b_{\overline{\operatorname{coh}}}(U)$. 
    Choose an \'{e}tale morphism $s\colon V \to X$ from an affine scheme. 
    Consider the fibered square
    \begin{displaymath}
        \begin{tikzcd}
            {U\times_X V} & V \\
            U & {X.}
            \arrow["{j^\prime}", from=1-1, to=1-2]
            \arrow["{s^\prime}"', from=1-1, to=2-1]
            \arrow["s", from=1-2, to=2-2]
            \arrow["j"', from=2-1, to=2-2]
        \end{tikzcd}
    \end{displaymath}  
    Pick an affine open cover $U\times_X V = \cup^n_{i=1} U_i$. 
    Now, if $E \in D^b_{\overline{\operatorname{coh}}}(U)$, then the cohomology sheaves of $E$ have sets of countable sections over each $U_i$. 
    We can use \cite[\href{https://stacks.math.columbia.edu/tag/08GH}{Tag 08GH}]{stacks-project} to translate the problem onto the small Zariski sites of these schemes. 
    Define $U^\prime := \cup^n_{i=2} U_i$. 
    Applying \cite[\href{https://stacks.math.columbia.edu/tag/08BX}{Tags 08BX},  \href{https://stacks.math.columbia.edu/tag/08BV}{08BV}, \& \href{https://stacks.math.columbia.edu/tag/01EZ}{01EZ}]{stacks-project}, there exists the distinguished triangle in $D(\mathcal{O}_V (V))$,
    \begin{displaymath}
        \mathbf{R}\Gamma(U\times_X V,E) \to \mathbf{R}\Gamma(U_1,E) \oplus \mathbf{R}\Gamma(U^\prime,E) \to \mathbf{R}\Gamma(U_1 \cap U^\prime,E)\to \mathbf{R}\Gamma(U\times_X V,E)[1].
    \end{displaymath}
    Then the claim follows by an inductive argument on $n$. 
    Alternatively, this could be argued using a C\v{e}ch complex (see proof of \cite[Proposition B.1]{Hall/Lamarche/Lank/Peng:2025}). 
    
    Next, we check that $\mathbf{L}j^\ast D^b_{\overline{\operatorname{coh}}}(X) \subseteq D^b_{\overline{\operatorname{coh}}}(U)$.
    Recall that any object of $D^b_{\overline{\operatorname{coh}}}(X)$ can be obtained from its cohomology sheaves using finitely many cones, shifts, finite coproducts, and direct summands (see \cite[Lemma 3.14]{GuisadoVillaalgordo/Lank:2026}). 
    Hence, to show that $\mathbf{L}j^\ast D^b_{\overline{\operatorname{coh}}}(X) \subseteq D^b_{\overline{\operatorname{coh}}}(U)$, it suffices to verify that $\mathbf{L}j^\ast \overline{\operatorname{coh}}(X) \subseteq D^b_{\overline{\operatorname{coh}}}(U)$. 
    Let $E\in \overline{\operatorname{coh}}(X)$. 
    Consider an \'{e}tale morphism $t\colon \operatorname{Spec}(R) \to U$. 
    Then $j\circ t$ is \'{e}tale with affine source. 
    Thus, $\Gamma(\operatorname{Spec}(R),E)$ is a countably generated $R$-module, and the claim follows.

    Now, we return to the proof. 
    By \cite[Lemma A.4]{GuisadoVillaalgordo/Lank:2026}, the counit $\mathbf{L}j^\ast \mathbf{R}j_\ast \to 1$ is an isomorphism on $D_{\operatorname{qc}}$.
    As $\mathbf{R}j_\ast D^b_{\overline{\operatorname{coh}}}(U) \subseteq D^b_{\overline{\operatorname{coh}}}(X)$ 
    there exists a Verdier localization,
    \begin{displaymath}
        D^b_{\overline{\operatorname{coh}},|Z|} (X) \to D^b_{\overline{\operatorname{coh}}}(X) \xrightarrow{\mathbf{L} j^\ast} D^b_{\overline{\operatorname{coh}}}(U).
    \end{displaymath}
    See e.g.\ \cite[Lemma 3.4]{Hall/Rydh:2017}. 
    Observe that 
    \begin{displaymath}
        D^b_{\operatorname{coh},|Z|} (X) = D^b_{\overline{\operatorname{coh}},|Z|} (X) \cap D^b_{\operatorname{coh}} (X).
    \end{displaymath}
    Thus, from the definition of a Verdier localization, it suffices to prove the following:
    \begin{enumerate}
        \item \label{lem:verdier1} If $M \in D^b_{\operatorname{coh}}(U)$, then there exists $\overline{M} \in D^b_{\operatorname{coh}}(X)$ such that $\mathbf{L} j^\ast\overline{M} \simeq M$ in $D^b_{\operatorname{coh}}(U)$.
        \item \label{lem:verdier2} If $\phi \colon E \to F$ is a morphism in $D^b_{\overline{\operatorname{coh}}}(X)$ with $\mathbf{L} j^\ast \phi$ an isomorphism and $F \in D^b_{\operatorname{coh}}(X)$, then there exists a map $\psi \colon E^\prime \to E$ in $D^b_{\overline{\operatorname{coh}}}(X)$ with $\mathbf{L} j^\ast \psi$ an isomorphism and $E^\prime \in D^b_{\operatorname{coh}}(X)$.
    \end{enumerate}

    We start with \eqref{lem:verdier1}.
    Recall that we have shown that $\mathbf{R}j_\ast M \in D^b_{\overline{\operatorname{coh}}}(X)$. 
    By \Cref{lem:halla3}, it follows that $\mathbf{R}j_\ast M \cong \operatorname{hocolim}_s M_s$, where $M_s \in D^b_{\operatorname{coh}}(X)$ and the induced maps $\mathcal{H}^k(M_s) \to \mathcal{H}^k(\mathbf{R} j_\ast M)$ are monomorphisms for all $k \in \mathbf{Z}$. 
    As $j$ is an open immersion, it is flat and, so the induced maps $\mathbf{L} j^\ast M_s \to \mathbf{L} j^\ast \mathbf{R} j_\ast M \simeq M$ induce monomorphisms $\mathcal{H}^k(\mathbf{L} j^\ast M_s) \to \mathcal{H}^k(M)$ for all $k\in \mathbf{Z}$. 
    However, $M \in D^b_{\operatorname{coh}}(U)$, and so we may take $s \gg 0$ such that $\mathbf{L} j^\ast M_s \simeq M$, which gives \eqref{lem:verdier1}. 
    
    Lastly, we check \eqref{lem:verdier2}.
    Consider the distinguished triangle:
    \begin{displaymath}
        G \to E \xrightarrow{\phi} F \to G[1].
    \end{displaymath}
    Then $\mathbf{L} j^\ast G \simeq 0$ and so $G \in D^b_{\overline{\operatorname{coh}},|Z|}(X)$. 
    By \Cref{lem:halla3}, $G \simeq \operatorname{hocolim}_s G_s$ where $G_s \in D^b_{\operatorname{coh}}(X)$ and $\mathcal{H}^k(G_s) \to \mathcal{H}^k(G)$ a monomorphism for all $k\in \mathbf{Z}$. 
    Hence, $\mathbf{L} j^\ast G_s \simeq 0$ for all $s$, and so $G_s \in D^b_{\operatorname{coh},|Z|}(X)$. 
    As $F\in D^b_{\operatorname{coh}}(X)$, $F \to G[1]$ factors through $G_s[1] \to G[1]$ for some $s \gg 0$. 
    There exists an induced morphism of distinguished triangles
    \begin{displaymath}
        \begin{tikzcd}
            {G_s} & {E^\prime} & F & {G_s [1]} \\
            G & E & F & {G[1]}
            \arrow[from=1-1, to=1-2]
            \arrow[from=1-1, to=2-1]
            \arrow[from=1-2, to=1-3]
            \arrow[from=1-2, to=2-2]
            \arrow[from=1-3, to=1-4]
            \arrow["{=}"', from=1-3, to=2-3]
            \arrow[from=1-4, to=2-4]
            \arrow[from=2-1, to=2-2]
            \arrow[from=2-2, to=2-3]
            \arrow[from=2-3, to=2-4]
        \end{tikzcd}
    \end{displaymath}
    which finishes the proof.
\end{proof}

\begin{lemma}
    \label{lem:relative_denseness_open_immersion}
    Let $X$ be a Noetherian algebraic space. 
    Consider an open immersion $j\colon U \to X$. 
    Suppose $Z\subseteq |U|$ is closed. 
    Denote by $\overline{Z}$ for the closure of $Z$ in $|X|$. 
    Then $\mathbf{L} j^\ast$ restricts to a Verdier localization $D^b_{\operatorname{coh},\overline{Z}} (X) \to D^b_{\operatorname{coh},Z}(U)$ whose kernel consists of $E\in D^b_{\operatorname{coh},\overline{Z}} (X)$ with support is contained in $Z\cap (|X|\setminus |U|)$.
\end{lemma}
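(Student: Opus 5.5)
The strategy is to deduce the statement from \Cref{lem:verdier}, applied not to $j$ itself but to a slightly larger open subspace that absorbs the complement of $\overline{Z}$. I read the kernel as consisting of objects supported in $\overline{Z}\cap(|X|\setminus|U|)$; the $Z$ in the statement appears to be a typo for $\overline{Z}$.

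\emph{Well-definedness and kernel.} Since $Z$ is closed in $|U|$, we have $\overline{Z}\cap|U| = Z$. As $j$ is flat, for $E\in D^b_{\operatorname{coh},\overline{Z}}(X)$ we get $\operatorname{Supp}(\mathbf{L}j^\ast E) = \operatorname{Supp}(E)\cap|U| \subseteq Z$. Hence $\mathbf{L}j^\ast$ restricts to a functor $D^b_{\operatorname{coh},\overline{Z}}(X)\to D^b_{\operatorname{coh},Z}(U)$. Its kernel consists of those $E$ with $\operatorname{Supp}(E)\subseteq \overline{Z}\setminus |U|$, which is the claimed description.

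\emph{Essential surjectivity with support control.} Set $W := |U|\cup(|X|\setminus\overline{Z})$, an open subspace, and let $j_W\colon W\to X$ and $u\colon U\to W$ be the open immersions. Given $M\in D^b_{\operatorname{coh},Z}(U)$, put $M' := \mathbf{R}u_\ast M$.

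We first check that $M'$ is bounded coherent and supported in $Z$, working on the open cover $\{U,\,|X|\setminus\overline{Z}\}$ of $W$. On $U$ we have $\mathbf{L}u^\ast M'\cong M$ by \cite[Lemma A.4]{GuisadoVillaalgordo/Lank:2026}. On $|X|\setminus\overline{Z}$, flat base change gives that $M'$ restricts to the pushforward of $M|_{U\setminus\overline{Z}}$, and this is $0$ because $\operatorname{Supp}(M)\subseteq Z$. Boundedness and coherence are local, so $M'\in D^b_{\operatorname{coh}}(W)$ with $\operatorname{Supp}(M')\subseteq Z$.

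Next, \Cref{lem:verdier} applied to $j_W$ yields $N\in D^b_{\operatorname{coh}}(X)$ with $\mathbf{L}j_W^\ast N\cong M'$. Then
\begin{displaymath}
    \operatorname{Supp}(N)\subseteq Z\cup(|X|\setminus W) = Z\cup(\overline{Z}\setminus|U|) \subseteq \overline{Z},
\end{displaymath}
and $\mathbf{L}j^\ast N\cong \mathbf{L}u^\ast M'\cong M$. So $N$ is the required lift.

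\emph{Full faithfulness of the induced functor.} Write $\mathcal{C}=D^b_{\operatorname{coh},\overline{Z}}(X)$, $\mathcal{D}=D^b_{\operatorname{coh}}(X)$ and $\mathcal{N}=D^b_{\operatorname{coh},|X|\setminus|U|}(X)$. By \Cref{lem:verdier}, $\mathcal{D}/\mathcal{N}\simeq D^b_{\operatorname{coh}}(U)$. It therefore suffices to show that $\mathcal{C}/(\mathcal{C}\cap\mathcal{N})\to\mathcal{D}/\mathcal{N}$ is fully faithful.

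I will use the standard Verdier criterion: every morphism $E'\to E$ with $E\in\mathcal{C}$ and cone in $\mathcal{N}$ should be dominated by some $E''\to E'$ with $E''\in\mathcal{C}$ and composite cone in $\mathcal{C}\cap\mathcal{N}$. To build $E''$, I will repeat the trick above. First restrict the roof to $W$ and replace it there by one supported in $Z$: over $|X|\setminus\overline{Z}$ the map $E\to G[1]$ is the zero object's map, and I intend to cut off the part of $E'$ supported away from $\overline{Z}$, using that $W$ is the disjoint-support union of $U$ and $|X|\setminus\overline{Z}$ along $U\setminus\overline{Z}$. Then I will lift back to $X$ via condition \eqref{lem:verdier2} in the proof of \Cref{lem:verdier} for $j_W$. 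The new cone is supported in $(|X|\setminus W)\cup Z$, intersected with the complement of $U$, hence in $\overline{Z}\setminus|U|$.

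I expect this morphism-level step to be the main obstacle. Making the cut-off of $E'$ on $W$ rigorous inside $D^b_{\operatorname{coh}}$, where no local cohomology functor $\Gamma_Z$ is available coherently, may instead require a Mayer--Vietoris square for the cover $\{U,\,|X|\setminus\overline{Z}\}$ of $W$, in the style of \cite[\href{https://stacks.math.columbia.edu/tag/08GW}{Tag 08GW}]{stacks-project}.
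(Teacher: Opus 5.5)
Your well-definedness and kernel computation are correct; you are right to read $Z\cap(|X|\setminus|U|)$ as $\overline{Z}\setminus|U|$. Your essential surjectivity argument is also correct, and it differs from the paper's route. The paper reruns the proof of \Cref{lem:verdier} and notes that the coherent approximations ($M_s$ of $\mathbf{R}j_\ast M$, and $G_s$ of $G$) have cohomology injecting into that of the approximated object, so they inherit support in $\overline{Z}$. You instead extend $M$ by zero to the auxiliary open $W=|U|\cup(|X|\setminus\overline{Z})$ and apply \Cref{lem:verdier} to $j_W$ as a black box.

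The gap is full faithfulness of $\mathcal{C}/(\mathcal{C}\cap\mathcal{N})\to\mathcal{D}/\mathcal{N}$. This is what makes the functor a Verdier localization, rather than merely an essentially surjective functor with the right kernel. Your third step only announces a ``cut-off'' of $E'$ on $W$ followed by a lift via condition (2) for $j_W$. It never constructs $E''$ or the map $E''\to E'$, never checks the composite cone, and you yourself flag it as a possible failure point.

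Your own $W$ closes this quickly if you use the other form of Verdier's criterion. It suffices that every morphism $g\colon C\to N$ with $C\in\mathcal{C}$ and $N\in\mathcal{N}$ factors through an object of $\mathcal{C}\cap\mathcal{N}$. Here $C|_W$ is supported in $Z$ and $N|_W$ in $W\setminus|U|$. Hence $\operatorname{\mathbf{R}\mathcal{H}\! \mathit{om}}(C|_W,N|_W)$ vanishes on each member of the open cover $\{U,\,W\setminus Z\}$ of $W$: on $U$ because $N|_U=0$, and on $W\setminus Z$ because $C|_{W\setminus Z}=0$. So $\mathbf{L}j_W^\ast g=0$. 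By \Cref{lem:verdier} for $j_W$, $g$ is then zero in $D^b_{\operatorname{coh}}(X)/D^b_{\operatorname{coh},|X|\setminus W}(X)$. It therefore factors through an object supported in $|X|\setminus W=\overline{Z}\setminus|U|$, i.e.\ through an object of $\mathcal{C}\cap\mathcal{N}$.

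The cut-off you were after does exist: $E'|_W$ is supported in the disjoint union of the closed subsets $Z$ and $W\setminus|U|$ of $W$, so it splits as a direct sum. You would still have to lift the resulting roof back to $X$, which is why the criterion above is the shorter route. Completed this way, your argument is a formal reduction to the absolute lemma, whereas the paper obtains the relative statement by reopening its approximation argument.
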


\begin{proof}
    This follows from the proof of \Cref{lem:verdier} after paying attention to support considerations. 
    To see, observe that the support constraints hold for \eqref{lem:verdier1} and \eqref{lem:verdier2} in the proof of \Cref{lem:verdier}. 
    In particular, in \eqref{lem:verdier1}, say we have $\mathbf{R} j_\ast M \cong \operatorname{hocolim}_s M_s$ for $M\in D^b_{\operatorname{coh},Z}(U)$. 
    By \cite[Lemma A.3]{GuisadoVillaalgordo/Lank:2026}, $\mathbf{R}f_\ast M \in D^b_{\operatorname{coh},\overline{Z}}(X)$, and hence $M_s \in D^b_{\operatorname{coh},\overline{Z}}(X)$ for all $s$. 
    Then it follows that $\mathbf{L}j^\ast M_s \in  D^b_{\operatorname{coh},Z}(U)$. 
    On the other hand, cones of morphisms of objects in $D^b_{\operatorname{coh},\overline{Z}}(X)$ belong to $D^b_{\operatorname{coh},\overline{Z}}(X)$. 
    Hence, $G$ in \eqref{lem:verdier2} belongs $D^b_{\operatorname{coh},\overline{Z}}(X)$.
\end{proof}

\end{appendix}

\bibliographystyle{alpha}
\bibliography{mainbib}

\end{document}